\documentclass[reqno,11pt]{amsart}
\usepackage[left=1in,right=1in,top=1in,bottom=1in]{geometry}
\usepackage{amsfonts,amssymb,amsmath,graphicx}
\usepackage{multirow,array,booktabs}
\usepackage{accents}

\usepackage{braket}
\usepackage{leftidx}
\usepackage{empheq}
\usepackage{mathtools}  

\usepackage[pdftex,plainpages=false,pdfpagelabels,hypertexnames=false,
       colorlinks=true,pdfstartview=FitV,
       linkcolor=blue,citecolor=red,
       urlcolor=blue]{hyperref}
\usepackage{thumbpdf}

\usepackage{amsaddr}

\usepackage{caption}
\usepackage{eucal}

\newtheorem{thm}{Theorem}[section]

\newtheorem{defn}[thm]{Definition}

\newtheorem{rem}[thm]{Remark}

\numberwithin{thm}{section}
\numberwithin{table}{section}
\numberwithin{equation}{section}
\numberwithin{figure}{section}

\newcommand{\magenta}[1]{{\leavevmode\color{black}#1}}

\newcommand{\INT}{\mathbb{Z}}

\newcommand{\BC}{\ensuremath{{\tt BC}}}
\newcommand{\p}{\ensuremath{{\tt p}}}
\newcommand{\DD}{\ensuremath{{\tt DD}}}
\newcommand{\NN}{\ensuremath{{\tt NN}}}
\newcommand{\DN}{\ensuremath{{\tt DN}}}
\newcommand{\ND}{\ensuremath{{\tt ND}}}
\newcommand{\cpld}{{\tt cpld}}
\newcommand{\LtL}{{\tt LtL}}
\newcommand{\LtN}{{\tt LtN}}
\newcommand{\lleft}{{\tt left}}
\newcommand{\rright}{{\tt right}}
\newcommand{\Trp}{{\tt Trp}}

\renewcommand{\hat}{\widehat}
\newcommand{\Chat}{\hat{C}}
\newcommand{\Ohat}{\hat{\Omega}}
\newcommand{\ahat}{\hat{a}}
\newcommand{\bhat}{\hat{b}}

\newcommand{\M}{\ensuremath{\mathcal{M}}}
\newcommand{\N}{\mathcal{N}}

\newcommand{\K}{\mathcal{K}}
\newcommand{\E}{\mathcal{E}}

\newcommand{\bO}{\ensuremath{\mathcal{O}}}
\renewcommand{\a}{\ensuremath{{\tt a}}}

\newcommand{\Lo}{\accentset{\circ}{L}}
\renewcommand{\L}{\ensuremath{{\tt L}}}
 \newcommand{\NL}{\ensuremath{{\tt NL}}}

\newcommand{\xp}{x^\prime}

\newcommand*\diff{\mathop{}\!\mathrm{d}}

\newcommand{\bff}{\mathbf{f}}
\newcommand{\bu}{\mathbf{u}}
\newcommand{\bv}{\mathbf{v}}
\newcommand{\bn}{\mathbf{n}}

\usepackage{makecell}

\newcommand{\uline}[1]{\multicolumn{#1}{c}{\rule{2cm}{0.4pt}}}

\usepackage{subcaption}
\graphicspath{}

\usepackage{tikz}
\usetikzlibrary{arrows}
\usetikzlibrary{matrix,backgrounds}
\pgfdeclarelayer{myback}
\pgfsetlayers{myback,background,main}
\tikzset{mycolor/.style = {line width=1bp,color=#1}}
\tikzset{myfillcolor/.style = {draw,fill=#1,#1,rounded corners}}

\usepackage{xcolor}
\definecolor{azure}{rgb}{0.0, 0.5, 1.0}
\definecolor{asparagus}{rgb}{0.53, 0.66, 0.42}
\definecolor{cadetgrey}{rgb}{0.57, 0.64, 0.69}
\definecolor{awesome}{rgb}{1.0, 0.13, 0.32}

\NewDocumentCommand{\fhighlight}{O{blue!40} m m}{%
\draw[myfillcolor=#1] (#2.north west)rectangle (#3.south east);
}
\NewDocumentCommand{\fhighlightL}{O{blue!40} m m}{%
\draw[myfillcolor=#1] (#2.south west)rectangle ([xshift=0.5em]#3.south east);
}

\begin{document}

\title[Coupling of Local and Nonlocal Problems]
{Coupling of Local and Nonlocal Problems \\ Using Local Boundary Conditions}
\author [Burak Aksoylu, Fatih Celiker, and Patrick Diehl]{Burak Aksoylu}
\address{Texas A\&M University-San Antonio \\
Department of Computational, Engineering, and Mathematical Sciences \\
San Antonio, TX 78224, USA}
\email{baksoylu@tamusa.edu}
\thanks{Corresponding author: Burak Aksoylu}

\author{Fatih Celiker}
\address{Wayne State University\\ Department of Mathematics\\
Detroit, MI 48202, USA}
\email{celiker@wayne.edu}

\author{Patrick Diehl}
\address{Los Alamos National Laboratory \\ Los Alamos, NM 87545, USA}
\email{diehlpk@lanl.gov}
\date{\today}

\begin{abstract}
We present a novel coupling method for local and nonlocal diffusion
problems in 1D.  Unlike other methods, our coupling method
exclusively uses local boundary conditions.  This is possible because
our nonlocal operators enforce them by construction.  Leveraging this
advantageous property, we construct a seamless coupling that is
remarkably natural.
The utilization of local boundary conditions allows for the transfer
of well-established numerical methods from local problems to nonlocal
ones.  Our local-to-nonlocal coupling method is inspired by the domain
decomposition method, which we would like to transfer to nonlocal
problems.  The main result of our study is the construction of a
local-to-nonlocal coupling method with a quantifiable $\bO(h)$
convergence that holds for an arbitrary solution.
For discretization of the local and nonlocal problems, the finite
element method and the Galerkin projection are employed, respectively.
We verify our convergence rate with extensive numerical experiments.
\newline \newline
{\bf Keywords:} Nonlocal Operator; Local Boundary Condition; Local-to-Nonlocal Coupling.
\end{abstract}

\maketitle

\section{Introduction}

Coupling of local and nonlocal (NL) problems offers a way to combine
the computational efficiency of local models with the capabilities of
NL models, in particular capturing discontinuities.  Unlike existing
methods, we present a coupling method which exclusively uses local
boundary conditions (BCs).  This is possible because our NL operators
enforce local BC. We leverage this advantageous property as the main
feature of our construction to devise a seamless coupling that is
remarkably natural.

We have been advocating that this feature allows for the transfer of
well-established numerical methods developed for local problems to
NL problems.  The main method we would like to transfer to NL problems
is the domain decomposition method (DDM) and our local-to-nonlocal
(LtN) coupling method is inspired by it.  The existing LtN coupling
methods do not exclusively use DDM techniques, probably due to the
lack of ability to satisfy local BC.

The main result of our study is the construction of a LtN coupling method
with a quantifiable $\bO(h)$ convergence that holds for an arbitrary
solution.  The hallmark features (HFs) of our coupling method are the
following:
\begin{enumerate}
\item[{\bf (HF1)}] The coupled equation at the interface is an approximation
  of the equation in the bulk so that the coupling is seamless.  Here
  seamless means that the discretized interface equation becomes a
  discretized bulk equation.
\item[{\bf (HF2)}] For the equilibrium of force, the type of the BC on both sides
  of the interface should be Neumann.  Hence, the operators on either
  side of the interface should produce a derivative operator.
\end{enumerate}
These features are verified by Taylor expansions.  Due to nonlocality
of the operator and the fact that weak formulation is used for
discretization, the resulting stiffness matrix is not sparse. Since
the number of terms appearing is large, we use symbolic computation to
perform the Taylor expansions.  Once these features are in place, we
numerically establish that the rate of convergence of the coupling
method is $1$ with linear finite element discretization.

\subsection{Existing Literature}

Coupling of local and NL problems has received great attention from
the engineering community.  Various approaches to coupling have been
taken.  Similar to our method, one class of methods utilizes an
interface without an overlap region. The interface conditions are
formulated by matching displacements
\cite{bie2018coupling,madenci2018coupling} or stresses
\cite{ongaro_overall_2021,silling2020Couplingstresses,silling15Variablehorizonperidynamicmedium}.

The other class of methods employs an overlap region in which the
local and NL models coexist.  In these approaches, the NL region is
extended by one or more horizon lengths, creating a transition zone
that overlaps with the local domains.  Coupling within the overlap
region is achieved either by matching displacements
\cite{kilic2010coupling,liu2012coupling,ni2019_coupling_with_mirco,sun2019superposition,zaccariottoEtAl2018_coupling_most_cited}
or stresses \cite{NIKPAYAM2019308}. To align with the coexisting local
and NL descriptions, additional constraints are introduced in the
overlap region
\cite{diehl2022_coupling,selesonEtAl2022_coupling_review}.  A common
drawback of this approach is that the NL domain must be artificially
enlarged, which may increase the computational cost and complicate the
physical interpretation of the coupling region.
We do not go into details of these approaches and refer the reader to the comprehensive surveys in \cite{diehl2022_coupling,selesonEtAl2022_coupling_review,madenci_coupling_2014} for further details.

One desirable property in LtN coupling is to pass the patch tests
\cite{diehl2022_coupling,selesonEtAl2022_coupling_review,galvanettoEtAl2016_coupling_most_cited2,jiangGlusa2024,zaccariottoEtAl2018_coupling_most_cited}. These
tests were designed for NL operators with NL BCs.  Since our governing
operators enforce local BC, patch tests are not applicable to our
operators mainly due to the compatibility conditions; see
Sec.~\ref{subsec:compa_cond}.


The rest of the paper is organized as follows.
In Sec.~\ref{sec:comparison}, we first provide a comparison of solving boundary value problems (BVPs) employing integral equations (IEs) versus partial differential equations (PDEs).  Then, we explain how the fact that our operator is Fredholm of the second kind plays a critical role in satisfying BCs. 
In Sec.~\ref{sec:govering_op}, we explain the construction of the governing operator. 
In Sec.~\ref{sec:DD}, we show the details of how to view the domain decomposition of the local problem as a local-to-local (LtL) coupling and how it inspired us to construct the LtN coupling. 
In Sec.~\ref{sec:sparsity}, we present the sparsity structure of the stiffness matrix resulting from the Galerkin projection discretization of the governing operator with linear basis functions.
In Sec.~\ref{sec:BC_weak}, we explain how a Neumann BC is enforced in the weak formulation.  We show that the sum of the corresponding rows collectively produces a Neumann approximation, which is a manifestation of nonlocality in weak form.
In Sec.~\ref{sec:Neu_BC_Taylor}, it is shown that the discretized Neumann operator leads to a derivative condition.

In Sec.~\ref{sec:interface_equ}, we present the equation at the interface resulting from the weak form and its relation to the strong form.  The success of our coupling method hinges on the fact that the discretized coupled operator is an approximation of the local operator at the interface.
The treatment of BCs in IEs is fundamentally different from that in PDEs because of the presence of compatibility conditions between the forcing function and the solution.  In Sec.~\ref{sec:bdry_treatment_compa_cond},  we explain the compatibility conditions in detail.
In Sec.~\ref{sec:rectification}, we introduce the rectification process which is necessary for the solution of a scaled NL problem to satisfy the BC.
Numerical experiments are presented in Sec.~\ref{sec:numerical_experiments}.
We conclude in Sec.~\ref{sec:conclusion}.

\section{Solving Boundary Value Problems with Integral Equations} \label{sec:comparison}

\begin{table}[t]
\centering
\begin{tabular}{
 >{\centering\arraybackslash}p{1.75cm}
 >{\centering\arraybackslash}p{1.5cm}
 >{\centering\arraybackslash}p{3.0cm}
 >{\centering\arraybackslash}p{1.5cm}
 >{\centering\arraybackslash}p{2.75cm}
 >{\centering\arraybackslash}p{2.75cm}
}
\toprule
\textcolor{black}{\textbf{Equation Type}} &
\textcolor{black}{\textbf{BC Type}} &
\textcolor{black}{\textbf{Governing Operator}} &
\textcolor{black}{\textbf{Rhs}} &
\textcolor{black}{\textbf{Boundary Value}} &
\textcolor{black}{\textbf{Compatibility Conditions}} \\
\midrule
IE &
\makecell{user \\ defined} &
\makecell{determined by \\ BC type} &
\makecell{user \\ defined} &
\makecell{determined \\by rhs} &
required \\
\midrule
PDE &
\makecell{user \\ defined} &
\makecell{determined\\independently\\from BC type} &
\makecell{user \\ defined} &
\makecell{user \\ defined} &
not required \\
\bottomrule
\\
\end{tabular}
\caption{Comparison of solving a boundary value problem with an integral equation versus a partial differential equation}
\label{tab:comparison}
\end{table}

The IEs of interest have the ability to accommodate discontinuities in
the solution.  We prefer to use IEs because we are interested in
capturing cracks.  Furthermore, IEs provide the ability to rigorously
prove that BCs hold, thanks to uniform convergence guaranteed by the
Hilbert-Schmidt property.
However, PDEs do not have the ability to treat discontinuities because
they form singularities for the governing operator.  The solutions to
PDEs do not necessarily satisfy the BC rigorously unless a special
arrangement is made.  For instance, a series solution must satisfy the
Weirstrass $M$-test to guarantee uniform convergence see
\cite[Sec.~18.3.2]{greenberg1998_book}. Since this is not always the
case, the series solutions qualify only as formal solutions
\cite[p.~980]{greenberg1998_book}.

Let us describe how one solves a BVP using an IE.  First, the user has
to choose the type of the BC based on which the governing operator is
determined.  Once the right hand side function $f$ is provided, one
can solve the BVP.  However, the boundary value of the solution $u$ is
determined by that of $f$, which gives rise to compatibility
conditions.  We dedicated Sec.~\ref{subsec:compa_cond} to the
explanation of compatibility conditions.
In the local case, on the other hand, the governing operator is
independent from the type of BC.  Since the boundary value of $u$ is
independent from that of $f$, there are no compatibility conditions.
We summarize this comparison in Table~\ref{tab:comparison}.

\subsection{Problem Description}

We utilize the Poisson problem as the main local equation throughout the
paper.  We study the coupling of the Poisson equation with the NL
diffusion equation in 1D.

The domain is chosen as $\Omega = (a,b)$.  We consider three types of
BCs: pure Dirichlet, mixed, and pure Neumann and label them with $\BC
= \DD, \DN, \NN$, respectively.  The three local problems under
consideration stated on a single domain are
\begin{equation} \label{single_domain_prob}
\left\{ 
\begin{aligned} - E \Delta u & = f ~~ \text{in}~\Omega\\ u(a) & = \alpha \\ u(b) & = \beta, \end{aligned} 
\right. \quad
\left\{ 
\begin{aligned} - E \Delta u & = f ~~ \text{in}~\Omega\\ u(a) & = \alpha \\ E u'(b) & = \beta, \end{aligned} 
\right. \quad \text{and} \quad 
\left\{ 
\begin{aligned} - E \Delta u & = f ~~ \text{in}~\Omega\\ E u'(a) & = \alpha \\ E u'(b) & = \beta. \end{aligned} 
\right.
\end{equation}
We denote the local governing operators in \eqref{single_domain_prob}
with $-\Delta_\DD, -\Delta_\DN$, and $-\Delta_\NN$, respectively, when
a label for the BC is needed.  For ease of presentation, the modulus
elasticity is chosen to be $E=1$ so that the usage of $E$ can be
omitted.

\subsection{Boundary Treatment with the Nonlocal Operator} 
The governing operator 
\begin{equation} \label{dom_range}
\M_\BC: L^2(\Omega) \to L^2(\Omega)
\end{equation}
is a densely-defined, self-adjoint, linear (DSL) operator with a
purely discrete spectrum.  Furthermore, $\M_\BC$ is bounded.  By
exploiting its boundedness, we employ holomorphic functional calculus
for bounded operators.  The solutions to problems whose governing
operator is $\M_\BC$, for instance, abstract linear wave equations,
that use can easily be constructed through functional calculus
\cite{aksoyluBeyerCeliker2017_foundation,beyerAksoyluCeliker2016_unbounded}.
The solutions to problems\textemdash for instance, abstract linear
wave equations that use $\M_\BC$ as a governing operator\textemdash
can easily be constructed through functional calculus
\cite{aksoyluBeyerCeliker2017_foundation,beyerAksoyluCeliker2016_unbounded}.

The space $L^2(\Omega)$ has a major weakness: It altogether ignores
values of functions on the boundary $\Omega$.  At first sight, it may
seem odd to work with such a space for a study of BCs.  Later, we will
elaborate on how BCs are treated in our framework.  The choice of
space as $L^2(\Omega)$ comes from the motivation to treat
discontinuities such as cracks.  The peridynamic theory
\cite{silling2000PDfirstPaper} was developed to treat cracks.  The
construction of our operators was inspired by the peridynamic
theory.  Since functions in $L^2(\Omega)$ admit discontinuities, it
has the abilility to lead to a suitable function space.  Further
discussion on the choice of function spaces will be presented in
Remark~\ref{rem:PC}.

The governing operator $\M_\BC$ is defined as
\begin{equation} \label{govern_op}
\M_\BC u(x) = c u(x) - \int_\Omega k_\BC(x,x') u(x') \diff x',
\end{equation}
where 
\begin{equation} \label{convo_op}
\K_\BC u(x) := \int_\Omega k_\BC(x,x') u(x') \diff x',
\end{equation}
is a convolution operator with a square integrable kernel $k_\BC$.  We
arrive at the subtlety that constitutes the cornerstone of our
treatment of boundary values.  Due to square integrability of the
kernel, the operator $\K_\BC$ possesses the Hilbert-Schmidt property.
An operator that possesses the Hilbert-Schmidt property ``feels the
boundary'' of $\Omega$ because $\K_\BC$ has a smoothing
property that guarantees a continuous extension to the boundary \cite[Thm.~6 and 7]{aksoyluBeyerCeliker2017_foundation}: 
For $u \in L^2(\Omega)$,
\begin{equation} \label{cont_ext}
\K_\DD u(x) \in C^0(\overline{\Omega}) \quad \text{and} \quad 
\K_\NN u(x) \in C^1(\overline{\Omega}).
\end{equation}

Let $x_0$ be a boundary point.  Kernel functions are designed in such a way that
\begin{equation} \label{vanishing_kernel}
\lim_{x \to x_0} k_\DD(x,x') = 0 \quad \text{and} \quad
\lim_{x \to x_0} \frac{\partial k_\NN}{\partial x} (x,x') = 0.
\end{equation}
Using \eqref{vanishing_kernel}, one sees that 
\begin{align*}
\lim_{x \to x_0} \K_\DD u(x) & = \lim_{x \to x_0} \int_\Omega k_\DD(x,x') u(x') \diff x' \\
& = \int_\Omega \lim_{x \to x_0} k_\DD(x,x') u(x') \diff x' \\
& = 0 \\
\intertext{and} \\
\lim_{x \to x_0} \frac{\diff}{\diff x} \K_\NN u(x) & = 
\lim_{x \to x_0} \frac{\diff}{\diff x} \int_\Omega k_\NN(x,x') u(x') \diff x' & \\
& =  \int_\Omega \lim_{x \to x_0} \frac{\partial k_\NN}{\partial x}  (x,x') u(x') \diff x' \\
& = 0.
\end{align*}
The interchange of $\lim_{x \to x_0}$ or $\lim_{x \to x_0} \frac{\diff}{\diff x}$ with $\int_\Omega$ is due to the uniform convergence provided by the Hilbert-Schmidt property.  

When one expects to enforce a BC from the utilization of the governing equation
\begin{equation} \label{governing_equ}
\M_\BC u(x) = f(x),
\end{equation}
the standard practice is to start with an $f$ that has a boundary
limit.  Let us consider the case of $\BC=\DD$.  Rewriting
\eqref{governing_equ}, one can determine the boundary limit of $u$:
\begin{equation} \label{bridge_equ}
c \lim_{x \to x_0} u(x) =  \lim_{x \to x_0} f(x) + \lim_{x \to x_0} \K_\DD u(x).
\end{equation}
Since $\lim_{x \to x_0} \K_\DD u(x) = 0$, we arrive at \emph{compatibility
conditions} between $u$ and $f$:
\begin{equation} \label{compa_cond}
c \lim_{x \to x_0} u(x) =  \lim_{x \to x_0} f(x).
\end{equation}
Once the boundary value $\lim_{x \to x_0} f(x)$ is provided, the
solution $u$ is forced to satisfy
$$\lim_{x \to x_0} u(x) =  1/c \lim_{x \to x_0} f(x).$$

We call \eqref{bridge_equ} \emph{the bridge equation}.  We want to
shed light on this critical concept.  The bridge equation does not
magically force both limits to exist for arbitrary $L^2(\Omega)$
functions.  Instead, it provides a connection between the
existence of boundary limits of $f$ and $u$ in the following way:
\begin{itemize}
\item {\bf Admissible Case:} When an $f$ with a boundary limit is
  chosen, $u$ is forced to have a boundary limit, and they must
  satisfy the compatibility conditions \eqref{compa_cond}. \\

\item {\bf Pathological Case:} When an $f$ without a boundary limit is
  chosen, $u$ is forced not to have a boundary limit.
\end{itemize}

Remarkably, the scenario that starts with a chosen $f$ and ends with $u$ can also
be reversed. One can rewrite \eqref{bridge_equ} and obtain the reverse
bridge equation:
\begin{equation*} 
\lim_{x \to x_0} f(x) = c \lim_{x \to x_0} u(x) - \lim_{x \to x_0} \K_\DD u(x).
\end{equation*}
For coupling scenarios, since $f$ is a given, we employ the bridge
equation \eqref{bridge_equ}.  The reversibility in the bridge equation
is a direct consequence of having a governing operator of Fredholm of
the second kind.

We conclude with a summary of the boundary treatment.  As governing
operator, we employ the DSL operator $\M_\BC: L^2(\Omega) \to
L^2(\Omega)$ with a purely discrete spectrum given in
\eqref{govern_op}.  Furthermore, $\M_\BC$ is a Fredholm operator of
the second kind with a square integrable convolution kernel $k_\BC$.
Under these assumptions on $\M_\BC$, we have proved the following
result about the existence of boundary limits:\footnote{To avoid
cluttering in the theorem statement, we did not include the mixed
cases $\BC=\DN$ and $\BC=\ND$, which easily follow.}
\begin{thm}
Consider the problem 
\begin{equation*} 
\M_\BC u(x) = f(x).
\end{equation*}
Let $x_0$ be a boundary point of $\Omega$. Then, 
\begin{eqnarray*}
\BC = \DD &:& \lim_{x \to x_0} f(x) ~\text{exists if and only if} ~\lim_{x \to x_0} u(x) ~\text{exists,} \\
\BC = \NN &:& \lim_{x \to x_0} f'(x) ~\text{exists if and only if} ~\lim_{x \to x_0} u'(x) ~\text{exists.}
\end{eqnarray*}
\end{thm}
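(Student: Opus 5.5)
The plan is to rewrite the governing equation \eqref{governing_equ} as the identity $c\,u = f + \K_\BC u$ in $L^2(\Omega)$. Then I use the smoothing property \eqref{cont_ext} of the Hilbert--Schmidt operator $\K_\BC$ to show that the term $\K_\BC u$ always has a boundary limit. Once that is known, the limit of $u$ exists exactly when the limit of $f$ does, because $c \neq 0$.

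For $\BC=\DD$: by \eqref{cont_ext}, $\K_\DD u \in C^0(\overline{\Omega})$ for every $u\in L^2(\Omega)$. Hence $\lim_{x\to x_0}\K_\DD u(x)$ exists, and by \eqref{vanishing_kernel} together with the uniform convergence argument above, it equals $0$. Suppose $\lim_{x\to x_0} f(x)$ exists. The bridge equation \eqref{bridge_equ} writes $u = c^{-1}(f+\K_\DD u)$, a sum of two functions each having a limit, so $\lim u$ exists and equals $c^{-1}\lim f$. Conversely, if $\lim u$ exists, the reverse bridge equation $f = c\,u - \K_\DD u$ shows that $\lim f$ exists. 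Both directions use only that $\K_\DD u$ has a limit at $x_0$; they do not use its value.

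For $\BC=\NN$: by \eqref{cont_ext}, $\K_\NN u\in C^1(\overline\Omega)$, so $(\K_\NN u)'$ extends continuously to $x_0$. By \eqref{vanishing_kernel}, its limit is in fact $0$. Differentiating $c\,u = f+\K_\NN u$ near $x_0$ gives $c\,u' = f' + (\K_\NN u)'$. The same two-directional argument then transfers existence of $\lim f'$ to $\lim u'$ and back.

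The main obstacle is the \NN\ case. There the identity $c u' = f' + (\K_\NN u)'$ needs $f$ and $u$ to be differentiable near $x_0$, which the $L^2$ setting does not give. I would handle it by reading the hypothesis ``$\lim f'$ exists'' as including that $f$ is differentiable on a one-sided neighbourhood of $x_0$ (likewise for $u$). Since $\K_\NN u$ is $C^1$, $u$ is differentiable on that neighbourhood exactly when $f$ is, so the identity holds there pointwise. With this in place, the remaining work is the interchange of limit, or derivative, with the integral. That interchange is already justified in the text by the Hilbert--Schmidt property and \cite[Thm.~6 and 7]{aksoyluBeyerCeliker2017_foundation}, which I would cite rather than reprove.
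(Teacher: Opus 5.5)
Your proposal is correct and follows essentially the paper's argument: rewrite $\M_\BC u = f$ as the bridge equation $c\,u = f + \K_\BC u$ and its reverse, use the Hilbert--Schmidt smoothing \eqref{cont_ext} to get a boundary limit for $\K_\DD u$ (resp.\ for $(\K_\NN u)'$), and transfer existence of the limit in both directions because $c \neq 0$. You also make explicit two points the paper leaves tacit: only the existence of that limit, not its vanishing, is needed for the equivalence, and the $\NN$ case requires $f$ and $u$ to be differentiable near $x_0$.
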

\begin{rem} \label{rem:no_free_lunch_thm}
Recall that the space $L^2(\Omega)$ is oblivious to values of functions on the boundary of $\Omega$.  When an operator whose domain is $L^2(\Omega)$ is required to enforce BCs, this becomes possible thanks to the Fredholm of the second kind property of the operator, however,  at the cost of compatibility conditions.  See Sec.~\ref{subsec:compa_cond}.
\end{rem}

\begin{rem} \label{rem:PC}
Since capturing cracks is the most relevant physical application, a
practical function space choice would be the piecewise continuous or
continuously differentiable functions with boundary extensions,
denoted by $PC^0(\overline{\Omega})$ or $PC^1(\overline{\Omega})$,
respectively.  For instance, the space $PC^0(\overline{\Omega})$
contains functions that are continuous everywhere in $\Omega$ except
at a finite number of internal points and possesses one-sided limits
everywhere including the boundary.
\end{rem}

\section{Nonlocal Operators} \label{sec:govering_op}

We studied various aspects of local BCs in NL problems over the years 
~\cite{aksoylu2023_comparison,aksoyluBeyerCeliker2017_implementation,aksoyluBeyerCeliker2017_foundation,aksoyluCelikerGazonas2020_asympComp,aksoyluCelikerKilicer2019_breakthru2D,aksoyluGazonas2020_bvp,aksoyluGazonas2020_dispersion,beyerAksoyluCeliker2016_unbounded}.
We present the main ingredients that are necessary to define the
NL governing operators.  For full detail, we refer to
\cite{aksoyluCelikerDiehl2024_implementation,aksoyluCelikerDiehl2024_construction}.
The midpoint of the domain $\Omega=(a,b)$
$$m:=\frac{a+b}{2}$$ plays a pivotal role in defining the operators.
Even and odd parts of a function will be used in the construction.  The
symmetric partner of $x$ with respect to the midpoint is $2m -x$.  One
defines the self-adjoint orthogonal even and odd projection operators
$P_e$ and $P_o$ with respect to $m$ in the following way:
\begin{defn} 
The even and odd projections $P_e$ and $P_o$ with respect to $m$ 
$$
P_e:L^2(\Omega) \to L^2(\Omega) \quad \text{and} \quad 
P_o:L^2(\Omega) \to L^2(\Omega)
$$
are defined by
\begin{equation} \label{defn:PePo}
\begin{aligned}
P_e u(x) & := && \frac12(u(x) + u(2m-x)), \\
P_o u(x) & := && \frac12(u(x) - u(2m-x)).
\end{aligned}
\end{equation} 
\end{defn}
We extend the concept of even and odd functions to the general domain as follows:
\begin{defn}
A function $u(x)$ is said to be even with respect to $m$ when
\begin{equation*}
u(x) = u(2m -x).
\end{equation*}
It is said to be odd with respect to $m$ when
\begin{equation*}
u(x) = -u(2m-x).
\end{equation*}
\end{defn}
Let $C \in L^2(\Omega)$ be a nonnegative univariate even function with respect to the midpoint $m$. Namely,
\begin{equation} \label{even_C}
C(x) = C(2m-x).
\end{equation}
Define the length of the general domain as 
$$
L := b-a.
$$

The integral based convolution operator $\K_\BC$ defined in
\eqref{convo_op} plays a central role in the construction of the
governing operator $\M_\BC$.  However, it originates from a series
based operator which we refer to as the abstract convolution
\cite{aksoyluBeyerCeliker2017_implementation,aksoyluBeyerCeliker2017_foundation}:
\begin{equation} \label{abstractConvo}
\K_\BC u(x) := \sqrt{L}
\sum_{k \in \INT} \braket{e_k^\BC|C} \braket{e_k^\BC|u} \, e_k^\BC(x), \quad \BC \in \{\a, \p\},
\end{equation}
where $\a$ and $\p$ denote the antiperiodic and periodic BCs, and
$\braket{\cdot|\cdot}$ denotes the $L^2(\Omega)$ inner product.  The
eigenfunctions of the classical operator $-\Delta_\BC$ are denoted by
$e_k^\BC, ~\BC \in \{\a, \p\}$.

We incorporate local BC into the NL operator through eigenfunctions
because they satisfy the BC by definition.  All governing operators
$\M_\BC$ are constructed through antiperiodic and periodic BC, and
their mixed combinations.  Hence, the initial effort is put to
construct the governing operator with the periodic BC.  For
implementation purposes, one has to find an integral representation of
\eqref{abstractConvo} which was one of the main themes in
\cite{aksoyluBeyerCeliker2017_implementation,aksoyluCelikerDiehl2024_implementation,aksoyluCelikerDiehl2024_construction}.
The integral representation of the abstract convolution with periodic
BC is given as follows:
\begin{thm}
Let $C \in L^2(\Omega)$ be an even function with respect to the midpoint $m$. Namely,
\begin{equation*}
C(x) = C(2m-x).
\end{equation*}
Let $\K_\p$ be the abstract convolution with
periodic BC defined in \eqref{abstractConvo}.  
Then, the integral representation of $\K_\p$ is 
\begin{equation} \label{integRep}
\K_\p u(x) = \int_a^b \Chat_{\p}(\xp - x + m) u(\xp) \diff \xp.
\end{equation} 
\end{thm}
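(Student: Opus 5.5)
Write the periodic eigenfunctions on $\Omega=(a,b)$ in the form
\[
e_k^\p(x)=\frac{1}{\sqrt{L}}\,e^{2\pi i k (x-a)/L},\qquad k\in\INT ,
\]
an orthonormal basis of $L^2(\Omega)$. Since $C$ is real-valued, $\braket{e_k^\p|C}$ and $\braket{e_k^\p|u}$ are Fourier coefficients, and \eqref{abstractConvo} is a multiplier operator: $\K_\p$ multiplies the $k$-th coefficient of $u$ by $\sqrt{L}\braket{e_k^\p|C}$. I read $\Chat_\p$ as the function on $\INT$-periodic translates defined by the series $\Chat_\p(y)=\sum_{k}\braket{e_k^\p|C}\,e_k^\p(y)$ rescaled so that the $\sqrt L$ factors cancel. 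Equivalently, $\Chat_\p$ is the $L$-periodic extension of $C$ to $\mathbb{R}$ with $e_k^\p$ continued periodically. I will fix the precise convention from the paper's definition of $\Chat_\p$ and adjust the constant accordingly.

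\textbf{Step 1: swap sum and integral.} Insert $\braket{e_k^\p|u}=\int_a^b \overline{e_k^\p(\xp)}u(\xp)\diff\xp$ into \eqref{abstractConvo} and exchange the sum with the integral. This is justified in $L^2$: the partial sums converge in $L^2(\Omega)$ because $(\braket{e_k^\p|C})\in\ell^2$. Moreover, the kernel series
\[
\sum_k \braket{e_k^\p|C}\, e_k^\p(x)\overline{e_k^\p(\xp)}
\]
converges in $L^2(\Omega\times\Omega)$, so the operators converge in Hilbert--Schmidt norm. This gives
\[
\K_\p u(x)=\int_a^b k_\p(x,\xp)\,u(\xp)\diff\xp,\qquad k_\p(x,\xp)=\sqrt{L}\sum_k \braket{e_k^\p|C}\,e_k^\p(x)\overline{e_k^\p(\xp)}.
\]

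\textbf{Step 2: identify the kernel.} Because $e_k^\p(x)\overline{e_k^\p(\xp)}=\frac{1}{\sqrt L}\,e_k^\p(x-\xp+a)$, the kernel depends only on $x-\xp$. It is the periodized Fourier series of $C$ evaluated at $x-\xp+a$, that is, $k_\p(x,\xp)=\Chat_\p(x-\xp+a)$. Since the series reproduces the periodic extension of $C$, this is the periodic extension of $C$ at the point $x-\xp+a$.

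\textbf{Step 3: use evenness.} The argument must become $\xp-x+m$. By \eqref{even_C}, $C(y)=C(2m-y)$. Periodicity with period $L=2(m-a)$ then gives, after reduction modulo $L$,
\[
\Chat_\p(x-\xp+a)=\Chat_\p\bigl(2m-(x-\xp+a)\bigr)=\Chat_\p(\xp-x+2m-a)=\Chat_\p(\xp-x+m+L/2).
\]
Matching this to the stated form $\Chat_\p(\xp-x+m)$ requires the paper's convention for $\Chat_\p$, presumably defined as a shifted periodic extension of $C$ centered at $m$. That is, the basis would be normalized relative to $m$, with $e_k^\p$ having phase $(x-m)$, which absorbs the half-period shift.

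I expect this bookkeeping of the shift and phase, together with the convention for $\Chat_\p$, to be the main obstacle. The analytic part, the interchange in Step 1, is routine by the Hilbert--Schmidt argument.
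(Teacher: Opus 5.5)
\textbf{Step 3 has a genuine gap.} Your overall strategy matches the one in the works the paper cites: expand in the periodic eigenbasis, interchange sum and integral via the Hilbert--Schmidt bound, recognize the Fourier series of the periodic extension of $C$, then use evenness. Steps 1 and 2 are fine for the basis you chose. The trouble in Step 3 is not mere bookkeeping. The series \eqref{abstractConvo} is not invariant under rephasing the eigenfunctions: replacing $e_k^\p$ by $\lambda_k e_k^\p$ with $|\lambda_k|=1$ multiplies the $k$-th term $\braket{e_k^\p|C}\braket{e_k^\p|u}\,e_k^\p(x)$ by $\overline{\lambda_k}$.

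Your basis with phase $(x-a)$ equals $(-1)^k$ times the midpoint-centered basis $L^{-1/2}e^{2\pi i k(x-m)/L}$. So you computed a genuinely different operator. Its kernel $\Chat_\p(x-\xp+a)=\Chat_\p(\xp-x+m+L/2)$ is a half-period translate of the claimed one. For the flat-top kernel it couples points at distance about $L/2$ instead of points within the horizon, and its argument leaves $\Ohat$, where the paper defines $\Chat_\p$. With your basis the statement is therefore false. No ``convention for $\Chat_\p$'' can repair this, because the paper defines $\Chat_\p$ explicitly as the unshifted $L$-periodic extension of $C$, equal to $C$ on $(a,b)$.

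\textbf{The fix.} The missing step is to commit to the normalization for which the theorem holds, $e_k^\p(x)=L^{-1/2}e^{2\pi i k(x-m)/L}$. On $(-1,1)$ this is the familiar $e^{ik\pi x}/\sqrt{2}$, centered at $m=0$.
\begin{itemize}
\item With this basis, $e_k^\p(x)\overline{e_k^\p(\xp)}=L^{-1/2}e_k^\p(x-\xp+m)$. Your Step 2 then gives the kernel $\Chat_\p(x-\xp+m)$, whose argument ranges exactly over $\Ohat$.
\item It remains to check that $\Chat_\p$ is even about $m$ on $\Ohat$. For $y\in(\ahat,a)$ one has $2m-y\in(b,\bhat)$, and by \eqref{even_C}, $\Chat_\p(2m-y)=C(2m-y-L)=C(y+L)=\Chat_\p(y)$.
\item Hence $\Chat_\p(x-\xp+m)=\Chat_\p(\xp-x+m)$, which closes the proof.
\end{itemize}
Your closing sentence guesses this fix. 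As written, though, the proof commits to the wrong basis and leaves the decisive step as a presumption.
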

\begin{proof} See \cite[Sec.~3.1]{aksoyluBeyerCeliker2017_implementation} for the 
domain $\Omega=(-1,1)$ and 
\cite[Thm.~6.2]{aksoyluCelikerDiehl2024_construction} for the general domain 
$\Omega=(a,b)$.
\end{proof}

The function $\Chat_p$ in \eqref{integRep} is the $L$-periodic extension of the kernel function $C$, which will be defined next.  Note that the argument of the kernel
function $\Chat_p$ in \eqref{integRep} is $\xp-x+m$.  Since $x, \xp
\in (a,b)$, $\xp-x \in (a-b,b-a)$.  Hence,
$$\xp - x + m \in (a-b+\frac{a+b}{2},b-a+\frac{a+b}{2}) = (\frac{3a-b}{2},\frac{3b-a}{2}).$$
Consequently, the kernel function sweeps $\Ohat:=(\ahat,\bhat)$
where 
$$
\ahat = \frac{3a-b}{2} \quad\text{and}\quad \bhat = \frac{3b-a}{2}.
$$
Since $\Chat_\a$ and $\Chat_\p$ are the $L$-antiperiodic and $L$-periodic extensions of $C$, respectively, $\Chat_\a$ and $\Chat_\p$ are the same as $C$ on $\Omega$.  For $x \in \Ohat \setminus \Omega$,  
$\Chat_\a$ and $\Chat_\p$ are obtained by appropriate shifts of length $L$.  More
precisely, the extensions are expressed explicitly as
\begin{equation*}
\begin{aligned}
\widehat{C}_\a(x) & :=
\left\{ \begin{aligned}
- & C(x+L), && x \in (\ahat,a), \\
& C(x), && x \in (a,b), \\ 
- & C(x-L), && x \in (b,\bhat),
\end{aligned} \right.
\quad &
\widehat{C}_\p(x) & :=
\left\{ \begin{aligned}
& C(x+L), && x \in (\ahat,a), \\ 
& C(x), && x \in (a,b), \\ 
& C(x-L), && x \in (b,\bhat),
\end{aligned} \right. \\
& & \\
\widehat{C}_{\a\p}(x) & :=
\left\{ \begin{aligned}
- & C(x+L), && x \in (\ahat,a), 
\\ & C(x), && x \in (a,b), 
\\ & C(x-L), && x \in (b,\bhat),
\end{aligned} \right.
\quad & 
\widehat{C}_{\p\a}(x) & :=
\left\{ \begin{aligned}
& C(x+L), && x \in (\ahat,a), 
\\ & C(x), && x \in (a,b), 
\\ - & C(x-L), && x \in (b,\bhat).
\end{aligned} \right.
\end{aligned}
\end{equation*}
Using the projections $P_e$ and $P_o$ given in \eqref{defn:PePo}, the governing
operators are defined as follows:
\begin{eqnarray*}
(\M_{\DD} - c) u(x) & = & - \int_\Omega \big(\hat{C}_\a(\xp-x+m)P_e + \hat{C}_\p(\xp-x+m)P_o \big) u(\xp) \diff \xp \\
(\M_{\NN} - c) u(x) & = & - \int_\Omega\big(\hat{C}_\p(\xp-x+m)P_e + \hat{C}_\a(\xp-x+m)P_o \big) u(\xp) \diff \xp \\
(\M_{\DN} - c) u(x) & = & - \int_\Omega\big(\hat{C}_{\a\p}(\xp-x+m)P_e + \hat{C}_{\p\a}(\xp-x+m)P_o \big) u(\xp) \diff \xp \\
(\M_{\ND} - c) u(x) & = & - \int_\Omega \big(\hat{C}_{\p\a}(\xp-x+m)P_e + \hat{C}_{\a\p}(\xp-x+m)P_o \big) u(\xp) \diff \xp \\
(\M_{\a} - c) u(x) & = & - \int_\Omega \big(\hat{C}_{\a}(\xp-x+m)P_e + \hat{C}_{\a}(\xp-x+m)P_o \big) u(\xp) \diff \xp \\
(\M_{\p} - c) u(x) & = & - \int_\Omega \big(\hat{C}_{\p}(\xp-x+m)P_e + \hat{C}_{\p}(\xp-x+m)P_o \big) u(\xp) \diff \xp,
\end{eqnarray*}
where 
\begin{equation*} 
c =  \int_\Omega C(x) \diff x.
\end{equation*} 

Using the definition of $P_e$ and $P_o$, one can give the explicit expression
of kernel functions in the governing operator \eqref{govern_op} as follows:
\begin{equation*}
\begin{aligned}
k_{\DD}(x,\xp) & = && \frac{1}{2} \big\{
\big[ \hat{C}_\a(\xp-x+m) + \hat{C}_\a(\xp+x-m) \big] && + &&
\big[ \hat{C}_\p(\xp-x+m) - \hat{C}_\p(\xp+x-m) \big] 
\big\} \\
k_{\NN}(x,\xp) & = && \frac{1}{2} \big\{
\big[ \hat{C}_\p(\xp-x+m) + \hat{C}_\p(\xp+x-m) \big] && + &&
\big[ \hat{C}_\a(\xp-x+m) - \hat{C}_\a(\xp+x-m) \big] 
\big\} \\
k_{\DN}(x,\xp) & = && \frac{1}{2} \big\{
\big[ \hat{C}_{\a\p}(\xp-x+m) + \hat{C}_{\a\p}(\xp+x-m) \big] && + && 
\big[ \hat{C}_{\p\a}(\xp-x+m) - \hat{C}_{\p\a}(\xp+x-m) \big] 
\big\} \\
k_{\ND}(x,\xp) & = && \frac{1}{2} \big\{
\big[ \hat{C}_{\p\a}(\xp-x+m) + \hat{C}_{\p\a}(\xp+x-m) \big] && + &&
\big[ \hat{C}_{\a\p}(\xp-x+m) - \hat{C}_{\a\p}(\xp+x-m) \big] 
\big\}\\
k_\a(x,\xp) & = && \hat{C}_\a(\xp-x+m) && &&  \\
k_\p(x,\xp) & = && \hat{C}_\p(\xp-x+m). && &&
\end{aligned}
\end{equation*}

We already mentioned that the operator $\M_\BC$ was inspired by the
original (linearized) peridynamic governing operator
$\M_{\text{orig}}$ defined as
$$\M_{\text{orig}} u(x) = \Big( \int_\Omega k(x,\xp) \diff \xp \Big) \, u(x) 
- \int_\Omega k(x,\xp) u(\xp) \diff \xp,
$$
where $k$ is a kernel that has no reference to a BC.  Note that
$\M_{\text{orig}}$ is merely a formal operator due to the lack of
reference to a rigorous BC \cite[p.~201]{silling2000PDfirstPaper}.
Instead of $k$, when the above kernel functions $k_\BC$ are used and
$\M_{\text{orig}}$ is modified slightly, we arrive at our governing
operator that was already given in \eqref{govern_op}:
\begin{equation*}
\M_\BC u(x)= \Big( \int_\Omega k_\BC(0,\xp) \diff \xp \Big) \, u(x) 
- \int_\Omega k_\BC (x, \xp) u(\xp) \diff \xp,
\end{equation*}
where the constant $c$ in \eqref{govern_op} is related to the bivariate kernel $k_\BC$ and the univariate kernel $C$ in the following way: 
$$c = \int_\Omega k_\BC(0,\xp) \diff \xp = \int_\Omega C(x) \diff x.$$
The slight modification to $\M_{\text{orig}}$ is necessary to obtain
a Fredholm operator of the second kind.  In practice, the univariate
kernel function $C$ in \eqref{even_C} is supported only in a
neighborhood called the \emph{horizon}.  More precisely, define the
indicator function, also known as the flat-top kernel, for $x \in \Omega$:
\begin{equation} \label{flat_top_kernel}
\chi_\delta(x) := \left\{
\begin{array}{ll}
1, & x \in (m-\delta,m+\delta), \\
0, & \textrm{otherwise.}
\end{array}
\right.
\end{equation}
Hence, the size of nonlocality is determined by $\delta$ and the assumption
$\delta < L/2$ is made to confine the computational domain in $\Omega$.  Since the 
horizon is constructed by $\chi_\delta(x)$, a practical kernel function takes the form
\begin{equation*} 
C(x) = \chi_\delta(x) \nu(x),
\end{equation*}
where $\nu(x) \in L^2(\Omega)$ is even.  The notion of horizon
triggers the definition of bulk: 
$$\text{Bulk} :=(a+\delta,b-\delta).$$
Consequently, the two operators $\M_{\text{orig}}$ and $\M_\BC$ agree
in the following way:
\begin{thm} 
When $k(x,\xp) = k_\BC(x,\xp)$,  the following agreement holds:
\begin{equation*}
\M_{\text{orig}} u(x) = \M_\BC u(x) ~\text{when}~
\begin{cases} 
x \in \Omega & \text{if}~ \BC \in \{\NN, \p\}, \\
x \in \text{Bulk} & \text{if}~ \BC \in \{\a, \DD, \DN, \ND \}
\end{cases}
\end{equation*}
and $\M_\BC$ enforces the local $\BC$.
\end{thm}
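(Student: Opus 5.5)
Since $\M_{\text{orig}}$ and $\M_\BC$ share the same integral part when $k=k_\BC$, the plan reduces the agreement to comparing the multipliers of $u(x)$. That is, I will show
\begin{equation*}
\int_\Omega k_\BC(x,\xp)\diff\xp = c = \int_\Omega C(\xp)\diff\xp
\end{equation*}
for all $x\in\Omega$ when $\BC\in\{\NN,\p\}$, and for $x\in\text{Bulk}$ otherwise. The concluding assertion that $\M_\BC$ enforces the local BC I will not reprove. It follows from the construction through the eigenfunctions $e_k^\BC$ together with \eqref{vanishing_kernel}, the continuous extension \eqref{cont_ext}, and the bridge equation \eqref{bridge_equ}. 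These are the boundary-limit results already stated in Sec.~\ref{sec:comparison}.

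First I handle the translation pieces. For a single term $\int_a^b \Chat(\xp-x+m)\diff\xp$, substitute $y=\xp-x+m$; the $y$-range is $(a-x+m,\,b-x+m)$, an interval of length $L$.
\begin{itemize}
\item For the periodic extension $\Chat_\p$, the integral over any window of length $L$ equals $\int_a^b C = c$, for every $x$.
\item For the reflected argument $\xp+x-m$, substituting $y=\xp+x-m$ again gives a length-$L$ window, so again the integral is $c$.
\item Consequently $k_\p$ integrates to $c$. For $k_\NN$, the two $\Chat_\p$ terms contribute $\tfrac12(c+c)=c$, and the $\Chat_\a$ bracket $\Chat_\a(\xp-x+m)-\Chat_\a(\xp+x-m)$ must integrate to zero.
\end{itemize}

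The main obstacle is that last $\Chat_\a$ bracket, which needs a careful look at the window. The window $(a-x+m,\,b-x+m)$ equals $\Omega$ shifted by $m-x$. It therefore contains a piece of $(a,b)$, where $\Chat_\a=C$, and a piece outside, where $\Chat_\a=-C(\cdot\pm L)$. Splitting the window at $a$ or $b$ and shifting the outside piece by $L$ gives
\[
\int_{\text{window}} \Chat_\a = \int_{\Omega\cap\text{window}} C \;-\; \int_{\Omega\setminus \text{shifted window}} C.
\]
The reflected window is the mirror image of the first about $m$. Because $C$ is even about $m$ by \eqref{even_C}, its integral is the same signed quantity, so the two cancel in the difference. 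This gives $\int_\Omega k_\NN(x,\xp)\diff\xp=c$ for all $x\in\Omega$.

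For $\BC\in\{\a,\DD,\DN,\ND\}$, I use the horizon. With $C=\chi_\delta\nu$, the support of $C$ is $(m-\delta,m+\delta)$. For $x\in\text{Bulk}$ we have $|x-m|<L/2-\delta$, and then the window contains all of $(m-\delta,m+\delta)$ inside $(a,b)$. The same holds for the mirrored window. Hence on these windows $\Chat_\a$, $\Chat_\p$, $\Chat_{\a\p}$ and $\Chat_{\p\a}$ all coincide with $C$ on the support, and every one-sided term integrates to $c$. For instance, in $k_\DD$ the $\Chat_\a$ bracket gives $\tfrac12(c+c)$ and the $\Chat_\p$ difference gives $0$, for a total of $c$. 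The mixed kernels are handled the same way, bracket by bracket. This proves agreement on the Bulk. I expect the only delicate bookkeeping to be the endpoint inequalities relating $\delta<L/2$ to the window containing the support.
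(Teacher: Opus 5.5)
Your proposal is correct. The paper gives no argument of its own here; it only refers to Thm.~4.1 of the companion construction paper, so there is no in-text proof to compare against. Your reduction to the row-sum identity $\int_\Omega k_{\BC}(x,\xp)\diff\xp=c$ is a self-contained substitute for that citation: once $k=k_{\BC}$, the integral parts of $\M_{\text{orig}}$ and $\M_{\BC}$ coincide, so only the multipliers of $u(x)$ need comparing. It also shows which hypothesis is used where:
\begin{itemize}
\item for $\p$, only the $L$-periodicity of $\Chat_{\p}$ on $\Ohat$;
\item for $\NN$, additionally only the evenness \eqref{even_C}, with no support assumption;
\item the horizon $\text{supp}\,C\subset(m-\delta,m+\delta)$, only for $\BC\in\{\a,\DD,\DN,\ND\}$.
\end{itemize}
The clause you do not prove is BC enforcement, which you import from Sec.~\ref{sec:comparison}. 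There, \eqref{vanishing_kernel} is asserted as a design property, and only for $\DD$ and $\NN$. That is acceptable within this paper, but see (iii) below.

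Three points to tighten.

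(i) In the Bulk step, ``coincide with $C$ on the support'' hides where the Bulk restriction actually enters. On $\Ohat$ the extensions may also be nonzero on $(\ahat,\ahat+\delta)$ and $(\bhat-\delta,\bhat)$, where they equal $\pm C(\cdot\pm L)$. So you must check that the window $(a-t,b-t)$, with $t=x-m$, misses these pieces. This holds iff $|t|\le L/2-\delta$. For a window of length $L$ this is equivalent to containing $(m-\delta,m+\delta)$, so your condition does suffice, but it should be said. Outside the Bulk, the $\Chat_{\a}$ part of $k_{\DD}$ picks up the negative copy, and the row sum becomes $\int_{\Omega\cap W}C-\int_{\Omega\setminus W}C$, which is generally smaller than $c$.

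(ii) That same expression is the correct form of your displayed formula: the subtracted set is $\Omega\setminus W$, not ``$\Omega\setminus$ shifted window''. A simpler route is to substitute $\xp\mapsto 2m-\xp$ in the reflected integral and use that $\Chat_{\a}$ and $\Chat_{\p}$ are even about $m$ on $\Ohat$, which follows from \eqref{even_C}. This kills every difference bracket built from $\Chat_{\a}$ or $\Chat_{\p}$, for all $x\in\Omega$, in one line.

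(iii) The BC clause can be closed with the same bookkeeping. At $x=a$ or $x=b$ one has $t=\mp L/2$, so the two arguments $\xp+L/2$ and $\xp-L/2$ differ by exactly $L$. Periodicity and antiperiodicity then give:
\begin{itemize}
\item $k_{\DD}(x_0,\cdot)=0$;
\item the analogous cancellation in $\partial_x k_{\NN}(x_0,\cdot)$;
\item $k_{\p}(b,\cdot)=k_{\p}(a,\cdot)$ and $k_{\a}(b,\cdot)=-k_{\a}(a,\cdot)$.
\end{itemize}
The mixed kernels need a short case split $\xp\lessgtr m$, because $\Chat_{\a\p}$ and $\Chat_{\p\a}$ are antiperiodic across $L$ on one half and periodic on the other.
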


\begin{proof}
See \cite[Thm.~4.1]{aksoyluCelikerDiehl2024_construction}.
\end{proof}

\section{The Domain Decomposition of the Local Problem and Local-to-Local Coupling} \label{sec:DD}
We mentioned that the DDM is the inspiration for our LtN coupling
method.  We begin by carefully studying the DDM and demonstrate how we
interpret it as a LtL coupling method.  We pay special attention to
the treatment at the interface.  This treatment constitutes the design
philosophy of our LtN coupling method.

The single domain problem \eqref{single_domain_prob} is equivalently
reformulated as a multi-subdomain problem
\cite{natafEtAl2015_book_DD,mathew2008_book_DD,Quarteroni:1999:DDforPDEs,Toselli:2005:DDBook}.
For simplicity, assume a two-subdomain scenario where $\Omega$ is
decomposed into two non-overlapping subdomains as $\Omega = \Omega_1
\cup \Omega_2 \cup \Gamma$ where $\Gamma:= \overline{\Omega}_1 \cap
\overline{\Omega}_2$. In our case $\Omega_1=(a,e)$, $\Omega_2=(e,b)$,
and $\Gamma = e$. The two-subdomain reformulation of
\eqref{single_domain_prob} for the case of $\BC=\DD$ is as follows:
\begin{equation} \label{two_subdomain_prob}
  \begin{aligned}
    \begin{split}
  -\Delta u_1 & = && f  && \text{in}~ \Omega_1\\
  u_1 & = && 0 && \text{on}~ \partial\Omega_1 \setminus \Gamma \\
  u_1 & = && u_2 && \text{on}~ \Gamma \\
  \nabla u_1 \cdot \bn_1 + \nabla u_2 \cdot \bn_2 & =  && 0 && \text{on}~ \Gamma \\
  -\Delta u_2 & = && f  && \text{in}~ \Omega_2\\
  u_2 & = && 0 && \text{on}~ \partial\Omega_2 \setminus \Gamma
\end{split}
    \end{aligned}
\end{equation}
For simplicity, the problem \eqref{two_subdomain_prob} is given for
homogeneous BC.  It is straightforward to generalize the construction
for inhomogeneous BC.

To fully grasp the LtL coupling process, we pose two independent subdomain
problems and want to obtain the single domain problem from the
coupling of these two.  First, setup the subdomain problems by
splitting the equations in \eqref{two_subdomain_prob} according to the
subdomains and introduce a flux variable $g_i$ at the interface:
\begin{equation} \label{subdomain_prob_1}
  \begin{aligned}
    \begin{split}
  -\Delta u_1 & = && f  && \text{in}~ \Omega_1\\
  u_1 & = && 0 && \text{on}~ \partial\Omega_1 \setminus \Gamma \\
  \nabla u_1 \cdot \bn_1 & =  && g_1 && \text{on}~ \Gamma,
    \end{split}
  \end{aligned}
\end{equation}
and
\begin{equation} \label{subdomain_prob_2}
  \begin{aligned}
    \begin{split}
  -\Delta u_2 & = && f  && \text{in}~ \Omega_2\\
  u_2 & = && 0 && \text{on}~ \partial\Omega_2 \setminus \Gamma \\
  \nabla u_2 \cdot \bn_2 & =  && g_2 && \text{on}~ \Gamma.
\end{split}
    \end{aligned}
\end{equation}
The flux variables $g_1$ and $g_2$ are introduced in order to guarantee well-posed subdomain problems.  They will eventually disappear in the formulation due to the balance of flux assumption given in \eqref{two_subdomain_prob}$_4$.  
Note that the BCs for the problems on $\Omega_1$ and $\Omega_2$ are
$\DN$ and $\ND$, respectively.  At the interface $\Gamma$, a Neumann
BC is utilized from both sides, which ensures the equilibrium of force.

The discretization of \eqref{subdomain_prob_1} and
\eqref{subdomain_prob_2} is obtained in the following way:  First, test
the equations with $v$ defined on $\Omega_i$:
\begin{equation*}
\braket{-\Delta u_i|v}_{\Omega_i} = \braket{f|v}_{\Omega_i}.
\end{equation*}  
Apply the divergence theorem:
\begin{equation*}
(-\nabla u_i \cdot \bn_i, v)_{\partial\Omega_i} + \braket{\nabla u_i | \nabla v}_{\Omega_i} = \braket{f|v}_{\Omega_i},
\end{equation*}  
where $(\cdot,\cdot)_{\partial\Omega_i}$ denotes the inner product on
the boundary $\partial\Omega_i$.
Rearrange the equation:
\begin{equation*}
 \braket{\nabla u_i| \nabla v}_{\Omega_i} = \braket{f|v}_{\Omega_i} + (\nabla u_i \cdot \bn_i, v)_{\partial\Omega_i}.
\end{equation*}
Discretize $u_i$ using a uniform
grid with grid size $h$ and the following nodal linear basis
functions:
$$\left\{ \phi_1, \phi_2, \ldots, \phi_e, \ldots, \phi_{N-1}, \phi_N\right\}.$$
Then, the discretization of $u_i$ becomes
\begin{eqnarray*}
  u_1 & = & u_1(a) \phi_1 + u_1(a+h) \phi_2 + \ldots + u_1(e-h) \phi_{e-1} + u_1(e) \phi_e^L \\
  u_2 & = & u_2(e) \phi_e^R + u_2(e+h) \phi_{e+1} + \ldots + u_2(b-h) \phi_{N-1} + u_2(b) \phi_N.
\end{eqnarray*}  
Here, with a slight abuse of notation, we denote the basis function associated to the node at $x=e$ by $\phi_e$, which
can be written by its pieces supported on the intervals $[e-h,e]$ and
$[e,e+h]$.  Denote the left and right pieces by $\phi_e^L$ and
$\phi_e^R$, respectively.  Hence,
\begin{equation} \label{phi_e}
 \phi_e = \phi_e^L + \phi_e^R.
\end{equation}

After applying the BC, the discretization of the local problem on two
subdomains gives the following systems:
$$
A_1 \bu_1 = \bff_1,
$$
which in matrix form becomes
\begin{equation*} 
\frac1h
\begin{bmatrix*}[r]
   1 &  \\
  -1 & 2        & -1        &            &  \\
     & \ddots & \ddots & \ddots &  \\
     &            & -1        &  2        & -1  \\
     &           &             & -1        & 1
\end{bmatrix*}
\begin{bmatrix*}[l]
  u_1(a) \\ u_1(a+h) \\ ~~\vdots \\ u_1(e-h) \\ u_1(e)
\end{bmatrix*}
=
\begin{bmatrix*}[l]
0 \\ \braket{f|\phi_2} \\ ~~\vdots \\ \braket{f|\phi_{e-1}} \\ \braket{f|\phi_e^L} + (\nabla u_1 \cdot \bn_1,\phi_e^L)_{\partial \Omega_1}
\end{bmatrix*}.
\end{equation*}
Similarly the system on $\Omega_2$ is
$$
A_2 \bu_2 = \bff_2,
$$
which takes the following matrix form
\begin{equation*} 
\frac1h
\begin{bmatrix*}[r]
   1 & -1 \\
  -1 & 2         & -1 \\
    & \ddots & \ddots & \ddots &   \\
    &          & -1       &  2         & -1  \\
    &          &         &           & 1
\end{bmatrix*}
\begin{bmatrix*}[l]
  u_2(e) \\ u_2(e+h) \\ ~~\vdots \\ u_2(b-h) \\ u_2(b)
\end{bmatrix*}
=
\begin{bmatrix*}[l]
 \braket{f|\phi_e^R} + (\nabla u_2 \cdot \bn_2,\phi_e^R)_{\partial \Omega_2}\\ \braket{f|\phi_{e+1}} \\ ~~\vdots \\ \braket{f|\phi_{N-1}} \\ 0
\end{bmatrix*}.
\end{equation*}
The boundary terms individually are equal to
\begin{equation} \label{flux_terms}
\begin{aligned}
 (\nabla u_1 \cdot \bn_1,\phi_e^L)_{\partial \Omega_1} && = &&-u_1'(a) \phi_e^L(a) + u_1'(e) \phi_e^L(e) &&  = && u_1'(e) && = &&  (\nabla u_1 \cdot \bn_1) (e) \\
 (\nabla u_2 \cdot \bn_2,\phi_e^R)_{\partial \Omega_2} && = && -u_2'(e) \phi_e^R(e) + u_2'(b) \phi_e^R(b) && =  && -u_2'(e) && = && (\nabla u_2 \cdot \bn_2) (e).
\end{aligned}
\end{equation}

To obtain the coupled system, as an initial step, append the two
systems by using the identification
\begin{equation*}
u_1(e)=u_2(e),
\end{equation*}
which stems from the no jump condition in
\eqref{two_subdomain_prob}$_3$:
\begin{eqnarray}
&& \frac1h \left[
 \begin{array}{rrrrcrrrr}
   1 &           &           &           &        &          &            &             &    \\
  -1 &2         &-1       &           &       &          &              &            &    \\
      &\ddots &\ddots &\ddots &       &          &             &            & \\
      &           &-1        &2         &-1    &          &             &            &  \\
      &           &           &-1        &1+1 &-1       &             &            &   \\
      &           &           &           &-1    &2        &-1          &            &  \\
      &           &           &           &       &\ddots & \ddots & \ddots &   \\
      &           &           &           &       &           & -1        &  2         & -1  \\
      &           &           &           &       &           &            &             & 1   
 \end{array}
 \right]
\begin{bmatrix*}[l]
  u_1(a) \\ u_1(a+h) \\ ~~\vdots \\ u_1(e-h) \\  u_1(e) \\ u_2(e+h)\\ ~~\vdots \\ u_2(b-h) \\ u_2(b)
\end{bmatrix*}
\nonumber \\
& = &
\begin{bmatrix*}[l]
 0 \\ \braket{f|\phi_2} \\ ~~\vdots \\ \braket{f|\phi_{e-1}} \\ \braket{f|\phi_e^L} + (\nabla u_1 \cdot \bn_1,\phi_e^L)_{\partial \Omega_1} + \braket{f|\phi_e^R} + (\nabla u_2 \cdot \bn_2,\phi_e^R)_{\partial \Omega_2} \\ \braket{f|\phi_{e+1}} \\ ~~\vdots \\ \braket{f|\phi_{N-1}} \\ 0
\end{bmatrix*}.  \label{cpldPr_raw}
\end{eqnarray}

Define a new variable $u$ as the solution on the single domain $\Omega$ in the following way:
\begin{equation*}
  u := \left\{
  \begin{array}{ll}
    u_1, & \text{in}~ \Omega_1, \\
    u_2, & \text{in}~ \Omega_2, \\
    u_1 (=u_2), & \text{on}~ \Gamma.
  \end{array}
  \right.
\end{equation*}
The no jump condition in \eqref{two_subdomain_prob}$_3$ guarantees the
continuity of $u$ on $\Omega$.

To get to the final coupled system, perform the addition in the stiffness
matrix.  Let's concentrate on the crowded entry in the load vector:
\begin{alignat*}{2}
 \braket{f|\phi_e^L} & +  \braket{f|\phi_e^R} + (\nabla u_1 \cdot \bn_1,\phi_e^L)_{\partial \Omega_1} + (\nabla u_2 \cdot \bn_2,\phi_e^R)_{\partial \Omega_2} && \\
& = 
 \braket{f|\phi_e} + (\nabla u_1 \cdot \bn_1,\phi_e^L)_{\partial \Omega_1} + (\nabla u_2 \cdot \bn_2,\phi_e^R)_{\partial \Omega_2} \quad && \text{using  \eqref{phi_e}}\\
& = \braket{f|\phi_e} + (\nabla u_1 \cdot \bn_1) (e) + (\nabla u_2 \cdot \bn_2) (e) \quad && \text{summing the terms in \eqref{flux_terms}} \\
& = \braket{f|\phi_e} \quad && \text{invoking the flux balance in \eqref{two_subdomain_prob}$_4$.}
\end{alignat*}
Now, all the pieces of the interface equation fall into place.  One
clearly sees that the local problem enjoys the hallmark feature {\bf
  (HF2)}.  Consequently, the coupled problem \eqref{cpldPr_raw}
seamlessly turns into the single domain problem on $\Omega$:
$$A \bu = \bff,$$
which, in matrix form, is 
\begin{equation*} 
\frac1h \left[
 \begin{array}{rrrrrrrrr}
   1 &           &           &           &        &          &            &             &    \\
  -1 &2         &-1       &           &       &          &              &            &    \\
      &\ddots &\ddots &\ddots &       &          &             &            & \\
      &           &-1        &2         &-1    &          &             &            &  \\
      &           &           &-1        &2 &-1       &             &            &   \\
      &           &           &           &-1    &2        &-1          &            &  \\
      &           &           &           &       &\ddots & \ddots & \ddots &   \\
      &           &           &           &       &           & -1        &  2         & -1  \\
      &           &           &           &       &           &            &             & 1   
 \end{array}
 \right]
\begin{bmatrix*}[l]
  u(a) \\ u(a+h) \\ ~~\vdots \\ u(e-h) \\  u(e) \\ u(e+h)\\ ~~\vdots \\ u(b-h) \\ u(b)
\end{bmatrix*}
=
\begin{bmatrix*}[l]
0 \\ \braket{f|\phi_2} \\ ~~\vdots \\ \braket{f|\phi_{e-1}} \\ \braket{f|\phi_e} \\ \braket{f|\phi_{e+1}} \\ ~~\vdots \\ \braket{f|\phi_{N-1}} \\ 0
\end{bmatrix*}.
\end{equation*}
\begin{rem}
The first crucial step in domain decomposition is the proof of
equivalence of the two-subdomain problem to the single-domain one.  We
basically reproduced this (the harder part of the equivalence, i.e.,
\eqref{two_subdomain_prob} $\Rightarrow$ \eqref{single_domain_prob})
proof by resorting to linear finite element discretization.  This
proof establishes the fact that the LtL coupling of two subdomain
problems seamlessly gives the single-domain problem in the case of linear
finite element discretization.  The proof employing a general
discretization can be obtained by resorting to weak formulation; see
\cite[Lemma~1.2.1]{ Quarteroni:1999:DDforPDEs}.
\end{rem}

\section{Sparsity of the Stiffness Matrix} \label{sec:sparsity}

For the discretization of local and NL, weak formulations are used.
For the local and NL operators, we use the linear finite element
discretization and the Galerkin projection, respectively.  In this
section, we carefully present the sparsity structure the stiffness
matrix.  It is vital to know which entries contribute to the Taylor
expansions in order to accomplish {\bf (HF1)} and {\bf (HF2)}.

Since weak formulation is used for discretization, the stiffness matrix entry is defined as
\begin{eqnarray} 
  A_{ij} & = & \braket{\M_\BC \phi_i| \phi_j } \nonumber \\
  & = & c \braket{\phi_i| \phi_j } - \braket{\K_\BC \phi_i| \phi_j } \nonumber \\
& =: & c M_{ij} - K_{ij}. \label{stiffMtrx_explicit1}
\end{eqnarray}
For linear basis functions, the matrix $M_{ij}$ in
\eqref{stiffMtrx_explicit1} is the tridiagonal mass matrix.  On the
other hand, the convolution term $K_{ij}$ produces more nonzero
entries per row due to the NL support of the kernel function
$k_\BC(x,\xp)$.  More precisely, considering the fixed node $x_i$ in
the bulk, we want to determine the column locations, i.e.,
$j$-indices, of nonzero entries in row $i$.  Since $i$ is fixed for
our consideration, to identify $j$-indices, one needs to move the
action of the operator $\K_\BC$ from $\phi_i$ to $\phi_j$.  This is
achieved using the self-adjointness of the operator $\K_\BC$:
\begin{equation*} 
  K_{ij} = \braket{\K_\BC \phi_i| \phi_j } = \braket{\phi_i| \K_\BC \phi_j }.
\end{equation*}
After rearranging the inner product, one obtains
\begin{equation} \label{stiffMtrx_explicit2}
  K_{ij} = \braket{\K_\BC \phi_j,\phi_i } =
  \int_\Omega \int_\Omega k_\BC(x,\xp) \phi_j(\xp)  \phi_i(x) \diff \xp \diff x. 
\end{equation}
Since the integral in \eqref{stiffMtrx_explicit2} is a double
integral, one needs to monitor of the support of $\phi_i(x)$ and more
importantly, that of $k_\BC(x,\xp) \phi_j(\xp)$ for fixed $x$.

Let $\text{supp}_i$ denote the support of the basis function $\phi_i$,
namely,
$$ \text{supp}_i := \overline{ \{x: \phi_i(x) \neq 0 \}}.$$ For fixed
$x$, the variable $\xp$ sweeps the interval $(x-\delta,x+\delta)$,
which in interval arithmetic is denoted by $(x-\delta,x+\delta) = x +
(-\delta,\delta).$ Since $x \in \text{supp}_i$, in totality $\xp$
sweeps the interval
$$\bigcup_{x \in \text{supp}_i} \big\{x + (-\delta,\delta)\big\} = \text{supp}_i + (-\delta,\delta),$$
where the sum of intervals is defined as $[a,b] + (\alpha,\beta) := (a+\alpha,b+\beta).$

To get a nonzero integral, we are interested in the basis functions
$\phi_j(\xp)$ whose supports have nontrivial intersection with
$\text{supp}_i + (-\delta, \delta)$.  Hence, the index of such basis
functions is denoted by
 \begin{equation} \label{Ii}
\mathcal{J}_{i} := \Big\{ j: \big\{\text{supp}_i + (-\delta,\delta) \big\}\cap  \text{supp}_j \neq \emptyset \Big\}.
\end{equation}
The index set $\mathcal{J}_{i}$ contains the column indices of nonzero
entries present in the $i$th row.

Throughout the paper, we assume that $\delta$ is an integer multiple
of $h$, i.e., $\delta = R h$ with a positive integer $R$.  Then,
\begin{equation*}
\text{supp}_i + (-\delta,\delta) = [x_{i-1},x_{i+1}] + (-Rh,Rh) = (x_{i-R-1}, x_{i+R+1}).
 \end{equation*}
Then, recalling \eqref{Ii}, one arrives at
$$\mathcal{J}_{i} = \{i-R-1, i-R, \ldots, i, i+1, \ldots, i+R, i+R+1 \}.$$
The number of nonzeros in the $i$th row of $K$ is the number of
indices in $\mathcal{J}_{i}$, which is $2(R+1) + 1$.  This number
dictates the number of nonzeros in the $i$th row of $A$ as well.

\section{The Boundary Condition Enforcement in the Weak Formulation} \label{sec:BC_weak}

The flat-top kernel in \eqref{flat_top_kernel} is chosen throughout
the paper, hence, the NL operator is scaled with
\begin{equation} \label{scaling}
scl = \frac{3}{\delta^3},
\end{equation}
so that the eigenvalues of the NL operator converge to those of the local
operator as $\delta \to 0$
\cite[Sec.~4]{aksoyluGazonas2020_dispersion}. 
In the Galerkin projection method, the stiffness matrix $A$ arising
from the discretization of the NL operator is obtained from the inner
product of the weak form in the following way:
\begin{equation*} 
\bv^\top A \bu :=  \braket{scl \, \M_\BC u| v}, 
\end{equation*}
where $\bu$ and $\bv$ are the coordinates of $u$ and $v$,
respectively.  The fact that the stiffness matrix $A$ is a quadratic
form will become instrumental in the ensuing discussion.

\magenta{In the weak form of the local problem, the BC is captured by a single
row of the stiffness matrix; see \eqref{Neumann_BC_local}.  In the NL
formulation, however, the BC is captured by several rows.  First,
recall that, in a weak formulation, the BC is enforced weakly, meaning
that the discretization provides an approximation of the BC.  Hence,
the associated rows collectively produce an approximation of the BC.}
Depending on the size of the horizon, $\delta=Rh$, the BC equation at
$x=a$ on $\Omega_1=(a,e)$ is obtained by summing the first $R+1$ rows of $A$.  \magenta{Since the
stiffness matrix is a quadratic form, we write the boundary equation
by adopting a quadratic form notation.  For instance, for $\delta=h$,
i.e., $R=1$, the Neumann BC equation obtained from \eqref{quadratic_form_right} is}
\begin{equation} \label{Neumann_BC_NL}
  \begin{bmatrix*} 1 \\ 1\end{bmatrix*}^\top
    \frac{1}{8h}
\begin{bmatrix*}[r]
  5 & -4 & -1 & 0  \\
  -4 & 9 & -4 & -1
\end{bmatrix*}
\begin{bmatrix*}[l]
u(a) \\ u(a+h) \\ u(a+2h) \\ u(a+3h)
\end{bmatrix*}
= \braket{f,\psi},
\end{equation}
where $\psi$ is the sum of the basis functions associated with the
first $R+1$ nodes.  Namely,
\begin{equation} \label{defn:psi}
\psi = \phi_a^R + \phi_{a+1}.
\end{equation}
The coordinate vector of this $\psi$ in the basis that lives on $\Omega_1$ is 
$\begin{bmatrix*} 1 \\ 1\end{bmatrix*}$.
The sum in the BC equation is due to the transpose on the coordinate vector
of the basis functions in the definition of $\psi$ in \eqref{defn:psi}.  The expression in
\eqref{Neumann_BC_NL} gives a Neumann condition as shown in
\eqref{taylor_func_R_L}.

Similar to \eqref{Neumann_BC_NL}, to maintain an alignment between the presentation of local and NL problems, we also adopt a quadratic form notation and write the local boundary equation for Neumann BC as
\begin{equation} \label{Neumann_BC_local}
  \begin{bmatrix*} 1 \end{bmatrix*}^\top
    \frac{1}{h}
\begin{bmatrix*}[r]
  1& -1
\end{bmatrix*}
\begin{bmatrix*}[l]
u(a) \\ u(a+h)
\end{bmatrix*}
= g(a),
\end{equation}
where the boundary data $g(a)$ is provided with the Neumann condition 
$g(a)=\nabla u(a) \cdot \bn(a).$
In Sec.~\ref{sec:interface_equ}, we set up the coupled equation at the interface by using the BC equations in \eqref{Neumann_BC_NL} and \eqref{Neumann_BC_local}. 
We will verify the validity of an interface condition by resorting to a Taylor expansion.

\section{The Neumann Boundary Condition and Taylor Expansions} \label{sec:Neu_BC_Taylor}
Consider a dynamic problem, such as the wave equation, governed either
by the local or NL operator with pure Neumann BCs.  In
\cite[Secs.~5~and~6]{aksoylu2023_comparison}, we proved that both
operators guarantee the balance of linear momentum when they are used
with pure Neumann BCs.  In the static problems discussed, this
favorable balance of linear momentum property is interpreted as the
equilibrium of force.  For coupling, we choose the equilibrium of
force as a design principle.  As a result, any problem that does not
touch the boundary is posed as a pure Neumann problem.  We labeled
this property as hallmark feature {\bf (HF2)}.

When a Neumann condition at a point is denoted simply as a derivative
at that point, the direction of the differentiation is lost.  To
clearly indicate the direction, we adopt a two-dimensional normal
derivative notation.  Note that the normal direction at a left and a
right point is $\bn(a) = (-1,0)$ and $\bn(b) = (1,0)$, respectively.
Hence,
\begin{alignat*}{3}
& (\nabla u \cdot \bn)(a) && =\nabla u (a) \cdot (-1,0)  && = -u'(a) \\
& (\nabla u \cdot \bn)(b) && = \nabla u(b) \cdot (1,0)   && = u'(b).
\end{alignat*}
\magenta{Since the Neumann BC plays such a critical role in the setup of coupling,
we show that the discretized NL operator enforces a Neumann condition
at the interface by resorting to a Taylor expansion.}  For accessibility, we present the case of $\delta=h$.  The case of $\delta=2h$ and $\delta=3h$ are presented in Sec.~\ref{sec:Neumann_conds}.

\subsection{The Neumann Condition on the Right and Left Sides}
Consider the $\M_\ND$ operator on $\Omega_2=(e,b)$ for the case of $\delta=h$. 
Let us clearly identify which block of $A$ is involved with the BCs.
Since the Neumann BC is imposed at the left end point of $\Omega_2$, one
needs to take into account the interaction taking place on the right
side of $x=e$.  One defines the function $\N_{\rright, \delta=h}^\NL
u(e)$ associated with the BC from the right side of $x=e$.
In this case, the test functions that contribute to the BC are
$v=\{\phi_e^R,\phi_{e+1}\}$.  On the other hand, the trial functions
that interact with $v = \phi_e^R$ and $v = \phi_{e+1}$ are
$u=\{\phi_e^R, \phi_{e+1}, \phi_{e+2} \}$ and $u=\{\phi_e^R,
\phi_{e+1}, \phi_{e+2}, \phi_{e+3} \}$, respectively.  Hence, the part
of $A$ that enforces the BC is the top left block.  More precisely,
$$ \N_{\rright, \delta=h}^\NL u(e) : = \braket{scl \, \M_\ND u| v},$$
where 
$$u = u(e) \phi_e^R  + u(e+h) \phi_{e+1} + u(e+2h) \phi_{e+2} + u(e+3h) \phi_{e+3}
\quad \text{and} \quad  v = \phi_e^R + \phi_{e+1}.$$  

The Neumann condition on the left side is similar:
Consider the $\M_\DN$ operator on $\Omega_1=(a,e)$ and define 
$$ \N_{\lleft, \delta=h}^\NL u(e) : = \braket{scl \, \M_\DN u| v}$$
with the choice of
$$u = u(e-3h) \phi_{e-3}  + u(e-2h) \phi_{e-2} + u(e-h) \phi_{e-1} + u(e) \phi_{e}^L 
\quad \text{and} \quad v = \phi_{e-1} + \phi_{e}^L.$$  
Invoking the quadratic form notation in \eqref{Neumann_BC_NL}, one arrives at the following expressions:
\begin{eqnarray}
  \N_{\rright, \delta=h}^\NL u(e) & = & 
  \begin{bmatrix*} 1 \\ 1\\ 0 \\\vdots \\ 0\end{bmatrix*}^\top
    \frac{1}{8h}
\left[ \begin{array}{rrrrcc}
  5 & -4 & -1 & 0 & 0 & \cdots \\
  -4 & 9 & -4 & -1 & 0 & \cdots \\
  * & * & * & * & * & * \\
   \vdots \, & \vdots \, & \vdots \,& \vdots \, & \vdots & \vdots \\
   * & * & * & * & * & *
\end{array} \right]
\begin{bmatrix*}[l]
u(e) \\ u(e+h) \\ u(e+2h) \\ u(e+3h) \\ 0 \\ \vdots \\ 0
\end{bmatrix*} \nonumber \\ 
& = & 
  \begin{bmatrix*} 1 \\ 1\end{bmatrix*}^\top
    \frac{1}{8h}
\begin{bmatrix*}[r]
  5 & -4 & -1 & 0  \\
  -4 & 9 & -4 & -1
\end{bmatrix*}
\begin{bmatrix*}[l]
u(e) \\ u(e+h) \\ u(e+2h) \\ u(e+3h)
\end{bmatrix*} \label{quadratic_form_right} \\
  \N_{\lleft, \delta=h}^\NL u(e) & = & 
  \begin{bmatrix*} 0 \\ \vdots \\ 0 \\ 1 \\ 1 \end{bmatrix*}^\top
    \frac{1}{8h}
\left[ \begin{array}{ccrrrr}
    * & * & * & * & * & * \\
    \vdots \, & \vdots \, & \vdots \,& \vdots \, & \vdots \, & \vdots \, \\
    * & * & * & * & * & * \\
    \cdots & 0 & 0 & -1 & -4 & 5 \\
  \cdots & 0 & -1 & -4 & 9 & -4  \\
\end{array} \right]
\begin{bmatrix*}[l]
0 \\ \vdots \\ 0 \\ u(e-3h) \\ u(e-2h) \\ u(e-h) \\ u(e)
\end{bmatrix*} \nonumber \\
& = & 
  \begin{bmatrix*} 1 \\ 1\end{bmatrix*}^\top
    \frac{1}{8h}
\begin{bmatrix*}[r]
  0  & -1 & -4 & 5  \\
  -1 & -4 & 9 & -4
\end{bmatrix*}
\begin{bmatrix*}[l]
u(e-3h) \\ u(e-2h) \\ u(e-h) \\ u(e)
\end{bmatrix*}. \nonumber 
\end{eqnarray}

This expression is identical to the one given in \eqref{Neumann_BC_NL}
for $x=a$.  A careful Taylor expansion yields
\begin{equation} \label{taylor_func_R_L}
\begin{aligned} 
\N_{\rright, \delta=h}^\NL u(e) && = && -u'(e) + \bO(h) = &~ (\nabla u \cdot \bn)(e) + \bO(h) \\ 
\N_{\lleft, \delta=h}^\NL u(e) && = && u'(e) + \bO(h) = &~ (\nabla u \cdot \bn)(e) + \bO(h). 
\end{aligned}
\end{equation}
Hence, the hallmark feature {\bf (HF2)} is satisfied for the NL operator both on the right and left sides of the interface.

\subsection{The Neumann Condition in the Local Case}
For the local problem, depending on the direction, similar Neumann
conditions are enforced at the interface.  Hence, one needs to define local
counterparts of $\N_{\rright, \delta=h}^\NL $ and $\N_{\lleft,
  \delta=h}^\NL$, which we call as $\N_{\lleft}^\L u(e)$ and
$\N_{\rright}^\L u(e)$, respectively.

Using the fact that the only test function that contributes to the BC is either $v=\phi_e^R$ or $v=\phi_e^L$, the corresponding trial functions are $u=\{\phi_e^R, \phi_{e+1}\}$ or $u=\{\phi_{e-1}, \phi_{e}^L\}$ for the right and the left function, respectively.  Invoking the quadratic form notation in \eqref{Neumann_BC_local}, the stiffness matrix arising from the discretization of the local problem dictates
\begin{alignat}{1}
  \N_{\rright}^\L u(e) & = 
  \begin{bmatrix*} 1 \end{bmatrix*}^\top
    \frac{1}{h}
\begin{bmatrix*}[r]
  1& -1
\end{bmatrix*}
\begin{bmatrix*}[l]
u(e) \\ u(e+h)
\end{bmatrix*} \nonumber \\ 
  \N_{\lleft}^\L u(e) & = 
  \begin{bmatrix*} 1 \end{bmatrix*}^\top
    \frac{1}{h}
\begin{bmatrix*}[r]
  -1& 1
\end{bmatrix*}
\begin{bmatrix*}[l]
u(e-h) \\ u(e)
\end{bmatrix*}. \label{taylor_func_L_left}
\end{alignat}
Taylor expansions, simpler than those in \eqref{taylor_func_R_L}, yield
\begin{equation*} 
\begin{aligned}
 \N_{\rright}^\L u(e) && =  && -u'(e) + \bO(h) && = &~ (\nabla u \cdot \bn)(e) + \bO(h) \\
\N_{\lleft}^\L u(e) && = && u'(e) + \bO(h) && = & ~(\nabla u \cdot \bn)(e) + \bO(h).
\end{aligned}
\end{equation*} 
Clearly, the hallmark feature {\bf (HF2)} is satisfied for the local operator on either side of the interface. 

\section{The Equation at the Interface and the Strong Form}  \label{sec:interface_equ}
One way to verify the validity of a weak form discretization is to
check if it captures an approximation of the underlying equation in
strong form.  Taylor expansions are utilized for such verification.
Since the interface is the most important location for the coupled
problem, we first focus on verifying the discretization of the LtL
coupled problem at the interface:
$$
\E_{\cpld}^\LtL: \quad 
\begin{bmatrix*} 1 \end{bmatrix*}^\top
 \frac{1}{h}
\begin{bmatrix*}[r] -1& 1 \end{bmatrix*}
\begin{bmatrix*}[l] u(e-h) \\ u(e) \end{bmatrix*}
+ 
\begin{bmatrix*} 1 \end{bmatrix*}^\top
\frac{1}{h}
\begin{bmatrix*}[r] 1& -1 \end{bmatrix*}
\begin{bmatrix*}[l] u(e) \\ u(e+h) \end{bmatrix*}
= h f(e).
$$ 
Move the $h$ to the left hand side and obtain an equivalent expression:
$$
\E_{\cpld}^\LtL: \quad 
 \frac{1}{h^2}
\begin{bmatrix*} 1 \end{bmatrix*}^\top
\begin{bmatrix*}[r] -1 & 2 & -1 \end{bmatrix*}
\begin{bmatrix*}[l] u(e-h) \\ u(e)  \\ u(e+h) \end{bmatrix*}
= f(e).
$$ 
Using a Taylor expansion, we see that the LtL coupled problem captures the local 
operator in strong form:
\begin{equation} \label{local_discrete_captures_Laplace}
 \frac{1}{h^2}
\begin{bmatrix*} 1 \end{bmatrix*}^\top
\begin{bmatrix*}[r] -1 & 2 & -1 \end{bmatrix*}
\begin{bmatrix*}[l] u(e-h) \\ u(e)  \\ u(e+h) \end{bmatrix*} = -\Delta u(e) + 
\bO(h^2).
\end{equation}
It is noteworthy that the accuracy of the LtL approximation is
$\bO(h^2)$.  Next, we verify if a similar approximation holds for the
LtN coupled problem.

\subsection{Coupled Equation at the Interface} 
Consider the configuration that the local operator and the NL operator
are placed on the left and right of the interface, respectively.  For
given $f$, the coupled operator and the coupled problem become
\begin{equation*} 
  \M_{\cpld} := \left\{
  \begin{array}{rl}
    -\Delta, & \text{in}~\Omega_1 := (a,e)\\
    scl \, \M_\ND, & \text{in}~ \Omega_2 :=(e,b)
  \end{array} \right. \quad \text{and} \quad
  \M_\cpld u = f.
\end{equation*}
Define the coupled equation at the interface as
$$
\N_{\cpld,\delta=h}^\LtN u(e) := \N_{\lleft}^\L u(e) + \N_{\rright,\delta=h}^\NL u(e).
$$
For the case of $\delta = h$, the test functions that contribute to
the interface equation are $v=\phi_e^L$ and $v=\phi_e^R+\phi_{e+1}$ in
$\Omega_1$ and $\Omega_2$, respectively.  Since the coupled system is
obtained by appending the two subproblems, the test function involved
is the sum of those functions, namely,
$$\psi_{\cpld}^{\delta=h}=\phi_e^L + \big(\phi_e^R+\phi_{e+1} \big)= \phi_e + \phi_{e+1}.$$
Hence, the load vector entry at the interface is obtained by testing $f$ with $\psi_{\cpld}^{\delta=h}$.
This brings us to the following definition:
\begin{equation*}
r_{\cpld,\delta=h}^\LtN (e) := \Trp\big( \braket{f|\psi_{\cpld}^{\delta=h}} \big),
\end{equation*}
where $\Trp\big( \braket{f|\psi_{\cpld}^{\delta=h}} \big)$ is the Taylor expansion at $x=e$ of the trapezoidal rule approximation of $\braket{f|\psi_{\cpld}^{\delta=h}}$.
The equation at the interface becomes
$$\E_{\cpld, \delta=h}^\LtN: \quad \N_{\cpld,\delta=h}^\LtN u(e) = r_{\cpld,\delta=h}^\LtN (e).$$
Using the trapezoidal rule, one gets
\begin{alignat*}{1}
\braket{f|\psi_{\cpld}^{\delta=h}} = h [ \frac12 & f(e-h) \psi_{\cpld}^{\delta=h}(e-h) +
f(e) \psi_{\cpld}^{\delta=h}(e) + \\
& f(e+h) \psi_{\cpld}^{\delta=h}(e+h) + 
\frac12 f(e+2h) \psi_{\cpld}^{\delta=h}(e+2h) ] + \bO(h^2).
\end{alignat*}
Using 
\begin{equation*}
\psi_{\cpld}^{\delta=h}(e-h) = \psi_{\cpld}^{\delta=h}(e+2h) = 0 \quad \text{and} \quad
\psi_{\cpld}^{\delta=h}(e) = \psi_{\cpld}^{\delta=h}(e+h) = 1,
\end{equation*}
one obtains 
\begin{equation*}
\braket{f|\psi_{\cpld}^{\delta=h}} = h \big( f(e) + f(e+h) \big) + \bO(h^2).
\end{equation*}
Apply a first order Taylor expansion on  $f(e+h)$ and obtain
\begin{equation*}
\braket{f|\psi_{\cpld}^{\delta=h}} = 2h f(e) + \bO(h^2).
\end{equation*}
This expression is what we define as $r_{\cpld,\delta=h}^\LtN (e)$, hence
\begin{equation*}
r_{\cpld,\delta=h}^\LtN (e) = 2h f(e).
\end{equation*}
Using \eqref{taylor_func_L_left} and \eqref{quadratic_form_right}, the equation at the interface finally becomes
$$\E_{\cpld, \delta=h}^\LtN: \quad  
  \N_{\lleft}^\L +   \N_{\rright, \delta=h}^\NL = 2h f(e).$$
We would like to relate $\E_{\cpld, \delta=h}^\LtN$ to the
discretization of the local operator in strong form.  For that we move
the $2h$ to the left hand side and obtain an equivalent expression:
\begin{equation} \label{interface_equ_equiv}
\E_{\cpld, \delta=h}^\LtN: \quad  
\frac{1}{2h} \N_{\lleft}^\L +   \frac{1}{2h} \N_{\rright, \delta=h}^\NL = f(e). 
\end{equation}
Rewriting \eqref{interface_equ_equiv}, we arrive at a  remarkable result:  The left hand side of \eqref{interface_equ_equiv} is an approximation of the local operator.  More precisely, 
\begin{eqnarray}
&& \frac{1}{2h^2} \begin{bmatrix*} 1 \end{bmatrix*}^\top
\begin{bmatrix*}[r]
  -1& 1
\end{bmatrix*}
\begin{bmatrix*}[l]
u(e-h) \\ u(e)
\end{bmatrix*}
+ 
  \frac{1}{16h^2} 
\begin{bmatrix*} 1 \\ 1\end{bmatrix*}^\top
\begin{bmatrix*}[r]
  5 & -4 & -1 & 0  \\
  -4 & 9 & -4 & -1
\end{bmatrix*}
\begin{bmatrix*}[l]
u(e) \\ u(e+h) \\ u(e+2h) \\ u(e+3h)
\end{bmatrix*} \nonumber \\
&& =  -\Delta u(e) + \bO(h). 
\label{nonlocal_discrete_captures_Laplace_delta_h}
\end{eqnarray}
It is nontrivial to conclude that so many terms in
\eqref{nonlocal_discrete_captures_Laplace_delta_h} would lead to an
approximation of the local operator.  For that result, we expanded
each term by symbolic computation.  The equation
\eqref{nonlocal_discrete_captures_Laplace_delta_h} represents the case
$\delta=h$.  For larger horizon sizes, the symbolic computation is
even more involved.  For the general case of $\delta=Rh$ case, the
same Taylor expansion with a leading error term of $\bO(h)$ holds.
It is instructive to setup the interface equation for the most common
horizon choice $\delta = 3h$.  Similar to
\eqref{quadratic_form_right}, define
\begin{equation*}
\N_{\rright, \delta=3h}^\NL u(e) := 
\begin{bmatrix*} 1 \\ 1 \\ 1 \\ 1 \end{bmatrix*}^\top
\frac{1}{216h}
\begin{bmatrix*}[r]
  36 & 0 & -23 & -12 & -1 & 0 & 0  & 0 \\
0 & 49  &  -12 & -24 & -12 & - 1 & 0 & 0 \\
-23 & -12 & 71 & 0 & -23 & -12 & -1 & 0 \\
-12 & -24 & 0 & 72 & 0 & -23 & -12 & -1
\end{bmatrix*}
\begin{bmatrix*}[l]
u(e) \\ u(e+h) \\ u(e+2h) \\ u(e+3h) \\ u(e+4h) \\ u(e+5h) \\ u(e+6h)
\end{bmatrix*}.
\end{equation*}
The test function related to the interface equation is the following sum:
\begin{alignat*}{1}
\psi_{\cpld}^{\delta=3h} & =\phi_e^L + \big( \phi_e^R+\phi_{e+1} + \phi_{e+2}+ \phi_{e+3} \big) \\
& = \phi_e + \phi_{e+1} + \phi_{e+2}+ \phi_{e+3}.
\end{alignat*}
Apply a first order Taylor expansion and obtain
\begin{equation*}
\braket{f|\psi_{\cpld}^{\delta=3h}} = 4h f(e) + \bO(h^2).
\end{equation*}
Hence, the equation at the interface becomes
\begin{equation*}
\E_{\cpld, \delta=3h}^\LtN: ~ \N_{\lleft}^\L +  \N_{\rright, \delta=3h}^\NL u(e) =  4h f(e).
\end{equation*}
Move the $4h$ factor to the left hand side of the equation and arrive at
\begin{equation*}
\E_{\cpld, \delta=3h}^\LtN: ~\frac{1}{4h} \N_{\lleft}^\L +  
\frac{1}{4h} \N_{\rright, \delta=3h}^\NL u(e) = f(e).
\end{equation*} 
Eventually, the equation at the interface approximates the local operator:
\begin{equation} \label{nonlocal_discrete_captures_Laplace_delta_3h}
\frac{1}{4h} \N_{\lleft}^\L +  \frac{1}{4h} \N_{\rright, \delta=3h}^\NL u(e) 
=  -\Delta u(e) + \bO(h). 
\end{equation}
Note that similar to \eqref{local_discrete_captures_Laplace}, the
equations \eqref{nonlocal_discrete_captures_Laplace_delta_h}  and 
\eqref{nonlocal_discrete_captures_Laplace_delta_3h} establish that
the coupled equation at the interface captures the local operator in
strong form.  But, we observe that the accuracy of the approximation
is $\bO(h)$, one order lower than the LtL coupled approximation in
\eqref{local_discrete_captures_Laplace}.  We believe that our coupling
method works because at the interface, we always end up with an
equation that approximates the local operator.

\iftrue

\subsection{Matrices in Color} We display the arising matrices in color from the discretization of the coupled problems in weak form.  The matrices for the L-NL-L and NL-L-NL coupled configurations are shown in Figs.~\ref{fig:L-NL-L_mtrx} and \ref{fig:NL-L-NL_mtrx}.  

\begin{figure}[tb]
\centering
\scriptsize
\begin{tikzpicture}
  \matrix (m)[
    matrix of math nodes, 
    nodes in empty cells,
    nodes={text width={width(999)}, 
    align=right},
    right delimiter=\rbrack,left delimiter=\lbrack
  ] {
1 \\
-1 & 2 & -1 & & \\
& \ddots  & \ddots & \ddots  & & & \\
& & -1 & 2 & -1 \\
& & & -1 & {1\!+\!36} & 0 & -23 & -12 & -1 & & & & \\
& & &  & 0 & 49 & -12 & -24 & -12 & -1 & & & \\
& & &  & -23 & -12 & 71 & 0 & -23 & -12 & -1 & & \\
& & &  & -12 & -24 & 0 & 72 & 0 & -23 & -12 & -1 & \\
& & &  & -1 & -12  & -23  & 0  & 72  & 0  & -23  &  -12 & -1 & \\
& & &  &  & \ddots &  & & & \ddots &  &  &  & \ddots \\
& & & & &  & -1 & -12  & -23  & 0  & 72  & 0  & -23  &  -12 & -1 &  \\
& & & & &  & & -1 & -12 & -23 & 0 & 72 & 0 & -24 & -12 & &  \\
& & & & &  & & & -1 & -12 & -23 & 0 & 71 & -12 & -23 & & \\
& & & & &  & & &   & -1 & -12 & -24 & -12 & 49 & 0 &  &  \\
& & & & &  & & &  & & -1 & -12 & -23 & 0 & {36\!+\!1} & \;\; -1 & \\
& & & & & &  & & &  & &  &  &  & -1 &  2 & -1  &  \\
& & & & & &  & & &  & &  &  &  &  &  \ddots & \ddots  & \ddots \\
& & & & & &  & & &  & &  &  &  &  &   & -1  & 2 & -1 & \\
 & & & & &  & & &  & &  &  &  &  &   &   &  &  & 1 \\
} ;
\begin{pgfonlayer}{myback}
\fhighlight[azure!20]{m-5-5}{m-8-12}
\fhighlightL[azure!20]{m-11-8}{m-15-15} 
\fhighlightL[awesome!20]{m-8-5}{m-11-15}
\fhighlightL[cadetgrey!30]{m-1-1}{m-4-5}
\fhighlightL[cadetgrey!30]{m-4-4}{m-5-4}
\fhighlightL[cadetgrey!30]{m-15-15}{m-18-19}
\fhighlightL[cadetgrey!30]{m-14-16}{m-15-16}
\fhighlight[asparagus!30]{m-1-1}{m-1-1}
\fhighlightL[asparagus!30]{m-18-19}{m-19-19}
\end{pgfonlayer}
\end{tikzpicture}
\caption{The matrix arising from the discretization of the L-NL-L coupled problem with the choice of the flat-top kernel and a horizon of $\delta=3h$.  For clarity,  only the integer valued entries are shown and the scalings are skipped.  The actual values of the stiffens matrix are obtained by multiplying the local and nonlocal blocks with $\frac{1}{h}$ and $\frac{3}{\delta^3}\frac{h^2}{24}=\frac{1}{216 h}$, respectively}
\label{fig:L-NL-L_mtrx}
\end{figure}

\begin{figure}[tb]
\centering
\scriptsize
\begin{tikzpicture}
  \matrix (m)[
    matrix of math nodes, 
    nodes in empty cells,
    nodes={text width={width(999)}, 
    align=right},
    right delimiter=\rbrack,left delimiter=\lbrack
  ] {
48 & 24 & -1 & -6 & -1 \\
24 & 95 & 12 & -22 & -12 & -1 \\
-1 & 12 & 73 & 0 & -23 & -12 & -1 \\
-6 & -22 & 0 & 72 & 0 & -23 & -12 & -1 \\
-1 & -12 & -23 & 0 & 72 & 0 & -23 & -12 & -1 \\
& \ddots &  &  &  & \ddots &  &  &  & \ddots \\
& & -1 & -12 & -23 & 0 & 72 & 0 & -23 & -12 & -1 \\
& & & -1 & -12 & -23 & 0 & 72 & 0 & -24 & -12 \\
& & & & -1 & -12 & -23 & 0 & 71 & -12 & -23 \\
& & & & & -1 & -12 & -24 & -12 & 49 & 0 \\
& & & & & & -1 & -12 & -23 & 0 & {\!36+\!1} & \;\;-1 & \\
& & & & & & & & &  -1 & 2 & -1 & & & & & & \\
& & & & & & & & &   & \ddots & \ddots & \ddots \\
& & & & & & & & &   &  & -1 & 2 & -1 \\
& & & & & & & & &   &  & & -1 & {1\!+\!36} & 0 & -23 & -12 & -1  \\
& & & & & & & & &   &  & &  & 0  & 49 & -12 & -24 & -12 & -1 \\
& & & & & & & & &   &  & &  & -23  & -12 & 71 & 0 & -23 & -12 & -1 \\
& & & & & & & & &   &  & &  &  -12 & -24 & 0 & 72 & 0 & -23 & -12 & -1 \\
& & & & & & & & & & & & &-1 & -12 & -23 & 0 & 72 & 0 & -23 & -12 & -1 \\
 & & & & & & & & & & & & & & \ddots &  &  &  & \ddots &  &  &  & \ddots \\
& & & & & & & & & & & & & & & -1 & -12 & -23 & 0 & 72 & 0 & -23 & -12 & -1 \\
& & & & & & & & & & & & & & &  & -1 & -12 & -23 & 0 & 72 & 0 & -22 & -6 \\
& & & & & & & & & & & & & & &  &  & -1 & -12 & -23 & 0 & 73 & 12 & -1 \\
& & & & & & & & & & & & & & &  &  &  & -1 & -12 & -22 & 12 & 95 & 24 \\
& & & & & & & & & & & & & & &  &  &  &  & -1 & 6 & -1 & 24 & 48 \\
} ;
\begin{pgfonlayer}{myback}
\fhighlight[azure!20]{m-5-1}{m-7-11}
\fhighlightL[azure!30]{m-18-14}{m-21-24}
\fhighlightL[awesome!20]{m-7-4}{m-11-11}
\fhighlightL[awesome!20]{m-14-14}{m-18-21}
\fhighlight[cadetgrey!30]{m-11-12}{m-11-12}
\fhighlightL[cadetgrey!30]{m-11-10}{m-14-14}
\fhighlightL[cadetgrey!30]{m-14-13}{m-15-13}
\fhighlight[asparagus!30]{m-1-1}{m-4-8}
\fhighlightL[asparagus!30]{m-21-17}{m-25-24}
\end{pgfonlayer}
\end{tikzpicture}
\caption{The matrix arising from the discretization of the NL-L-NL coupled problem with the choice of the flat-top kernel and a horizon of $\delta=3h$.  For clarity,  only the integer valued entries are shown and the scalings are skipped.  The actual values of the stiffens matrix are obtained by multiplying the local and nonlocal blocks with $\frac{1}{h}$ and $\frac{3}{\delta^3}\frac{h^2}{24}=\frac{1}{216 h}$, respectively}
\label{fig:NL-L-NL_mtrx}
\end{figure}

\section{Boundary Treatment in Local and Nonlocal Formulations} \label{sec:bdry_treatment_compa_cond}

In local problems, BCs are auxiliary constraints to the governing
equation.  Whereas in NL problems of interest, BCs are part of the
governing equation.  \magenta{Hence, local and NL problems are fundamentally
different in the way the boundary data enters the formulation.  In the
NL problem, the boundary data is already in the formulation before the
integration step of the weak form.}  Whereas in the local problem, it
enters the formulation after the integration step.  Let us recall local
weak formulation in detail.  First, one prepares the strong equation
to the weak formulation by multiplying with the test function $v$:
$$ 
-\Delta u(x) \, v(x) = f(x) \, v(x), \quad x \in \Omega.
$$
Integrate both sides:
$$
\int_\Omega -\Delta u(x) \, v(x) \diff x =  \int_\Omega f(x) \, v(x) \diff x.
$$
Apply integration by parts:
$$
\int_{\partial \Omega} -\nabla u(x) \cdot \bn(x) \, v(x) \diff s + \int_{\Omega} \nabla u(x) \cdot \nabla v(x) \diff x 
= \int_\Omega f(x) \, v(x) \diff x 
$$
Since the integration step is completed, the boundary data now enters the formulation:
$$
\begin{aligned}
\int_{\Omega} \nabla u(x) \cdot \nabla v(x) \diff x = & \int_{\partial \Omega} \nabla u(x) \cdot \bn(x) \, v(x) \diff s + \int_\Omega f(x) \, v(x) \diff x \\
= & \int_{\partial \Omega}  g(x) \, v(x) \diff s + \int_\Omega f(x) \, v(x) \diff x,
\end{aligned}
$$
where $g = \nabla u \cdot \bn$ is the prescribed Neumann data.  For
the NL formulation, the treatment on the boundary triggers an important
relation, namely, compatibility conditions.  We explain these next.

\subsection{Compatibility Conditions of the Nonlocal Problem} \label{subsec:compa_cond}
Consider the strong form of the NL problem with $\delta>0$:
\begin{equation} \label{NL_prob}
scl \; \M_\DN w(x) = f(x), \quad x \in \Omega.
\end{equation}
While the NL problem is posed for $x \in \Omega$, it has an
implication for $x \in \partial \Omega$.  This is due to the fact that
the convolution operator $\K_\DN$ has a continuous extension to the
boundary; see \eqref{cont_ext}.  Once the boundary data 
\begin{equation} \label{bdry_data_w}
\lim_{x \to a} w(x) ~~\text{and} ~~ \lim_{x \to b} w'(x)
\end{equation}
 are provided as part of the BVP, the operator $\M_\DN$ creates
compatibility conditions between the solution $w$ and the forcing
function $f$:
\begin{subequations} \label{compa_cond_DN}
\begin{alignat}{2}
\lim_{x \to a} scl \; \M_\DN w(x)  & = && \lim_{x \to a} f(x) \label{compa_cond_DN_strong_1}\\
\lim_{x \to b} \frac{\diff}{\diff x} scl \; \M_\DN w(x) & = && \lim_{x \to b} \frac{\diff f}{\diff x} (x) \label{compa_cond_DN_strong_2}.
\end{alignat}
\end{subequations}
Using the Hilbert-Schmidt\footnote{The Hilbert-Schmidt property allows for uniform convergence of series of functions.  Hence, limits can be interchanged, such as  those involving $\lim_{x \to \partial \Omega}$, to evaluate the boundary value.} property associated with the operator $\M_\DN$, we arrive at the compatibility conditions stated explicitly:
\begin{subequations} \label{compa_cond_DN_strong_explicit}
\begin{alignat}{2}
scl \, c \lim_{x \to a} w(x)  & = && \lim_{x \to a} f(x) \label{compa_cond_DN_strong_explicit_1}\\
scl \, c\lim_{x \to b} w'(x) & = && \lim_{x \to b} f'(x). \label{compa_cond_DN_strong_explicit_2}
\end{alignat}
\end{subequations}
Thus, the NL problem \eqref{NL_prob} is posed for $x \in \Omega$, but
due to compatibility conditions, it governs an equation for $x \in \overline{\Omega}$.
Recalling \eqref{scaling}, since we have 
$$scl \, c = \frac{3}{\delta^3} \, 2 \delta = \frac{6}{\delta^2},$$
it is more useful to rewrite \eqref{compa_cond_DN_strong_explicit} as
\begin{equation*} 
\begin{split}
\begin{aligned}
\lim_{x \to a} w(x) & = \delta^2/6 \lim_{x \to a} f(x) \\
\lim_{x \to b} w'(x) & = \delta^2/6 \lim_{x \to b} f'(x). 
\end{aligned}
\end{split}
\end{equation*}

Since the weak form of the governing equation \eqref{NL_prob} is
utilized, we study the compatibility conditions in weak form.  As an
initial step, one prepares the strong equation to the weak form by
multiplying with the test function $v$:
\begin{equation} \label{NL_equ_weak}
\big[scl \; \M_\DN w(x) \big] v(x) = \big[ f(x) \big] v(x), \quad x \in \Omega.
\end{equation}
Recalling the domain and range of the governing operator $\M_\DN$ in \eqref{dom_range}, we already assume that $w, f \in L^2(\Omega)$.  In the weak formulation, the inner product version of \eqref{NL_equ_weak} will eventually appear:
\begin{equation} \label{NL_equ_weak_inner_prod}
\braket{scl \; \M_\DN w| v} = \braket{f|v}.
\end{equation}
The self-adjointness of $\M_\DN$ forces us to guarantee the existence of the swapped version of \eqref{NL_equ_weak_inner_prod}:
\begin{equation*} 
\braket{w, scl \; \M_\DN v} = \braket{f| v}.
\end{equation*}
As a result, we also assume that the test function $v$ comes from
$L^2(\Omega)$, the domain of $\M_\DN$.  Similar to \eqref{bdry_data_w}, the boundary data of $v$, i.e.,
\begin{equation*} 
\lim_{x \to a} v(x) \quad \text{and} \quad \lim_{x \to b} v'(x)
\end{equation*}
should also be provided to set up the BVP in weak form.

For boundary data, the formulation demands that each function
appearing in \eqref{NL_equ_weak} and their derivatives have limits as
$x \to a$ and $x \to b$, respectively.  More precisely, for $\BC=\DN$,
since BCs employ $\lim_{x \to a}$ and $\lim_{x \to b}\frac{\diff}{\diff x}$, 
one would naturally expect the existence of
\begin{equation*}
\lim_{x \to a} w(x), \lim_{x \to a} v(x), \lim_{x \to a} f(x) \quad \text{and} \quad
\lim_{x \to b} w'(x), \lim_{x \to b} v'(x), \lim_{x \to b} f'(x).
\end{equation*}
But, due to the product rule applied as part of the Neumann condition at $x=b$, the formulation additionally demands the existence of 
\begin{equation} \label{additional_lims}
\lim_{x \to b} w(x), \lim_{x \to b} v(x), ~~\text{and} ~~\lim_{x \to b} f(x).
\end{equation}
Since all assumptions on the existence of limits are in place,  of we are ready to state the compatibility conditions in weak form:
\begin{subequations} \label{compa_cond_DN_weak}
\begin{alignat}{2}
\lim_{x \to a} scl \; \M_\DN w(x)  \, v(x) & = && \lim_{x \to a} f(x) \, v(x) \label{compa_cond_DN_weak_1} \\
\lim_{x \to b} \frac{\diff}{\diff x} \big[ scl \; \M_\DN w(x) \; v(x) \big] & = && \lim_{x \to b} \frac{\diff}{\diff x} \big[ f(x) \, v(x) \big]. \label{compa_cond_DN_weak_2}
\end{alignat}
\end{subequations}

To enable division, we further assume that the following boundary
limits of $v$,
$$\lim_{x \to a} v(x) ~~ \text{and} ~~ \lim_{x \to b} v(x),$$
are nonzero.  Otherwise the compatibility conditions are trivially
satisfied.  Distribute $\lim_{x \to a}$ to obtain
$$
\lim_{x \to a} scl \; \M_\DN w(x)  \lim_{x \to a} v(x) = \lim_{x \to a} f(x) \lim_{x \to a} v(x). 
$$ 
After division by $\lim_{x \to a} v(x)$ , we immediately see that
\eqref{compa_cond_DN_weak_1} reduces to
\eqref{compa_cond_DN_strong_1}.  

The compatibility condition with the derivative is more involved.
Before taking $\lim_{x \to b}$, apply the differentiation product rule
in \eqref{compa_cond_DN_weak_2}:
$$
\frac{\diff}{\diff x} \big[ scl \, \M_\DN w(x) \big] \, v(x) + 
\big[ scl \; \M_\DN w(x) \big] \, v'(x) = f'(x) \, v(x) + f(x) v'(x).
$$
Distribute $\lim_{x \to b}$ to obtain
\begin{eqnarray*}
&& \lim_{x \to b} \frac{\diff}{\diff x} \big[ scl \, \M_\DN w(x) \big] \lim_{x \to b} v(x) + 
\lim_{x \to b} \big[ scl \; \M_\DN w(x) \big] \lim_{x \to b} v'(x) = \\
&& \lim_{x \to b} f'(x) \lim_{x \to b} v(x) + \lim_{x \to b} f(x) \lim_{x \to b} v'(x).
\end{eqnarray*}
Using \eqref{NL_prob}, one obtains
\begin{equation*} 
\lim_{x \to b} \big[ c \, scl \, w'(x) \big] \lim_{x \to b} v(x) + 
\lim_{x \to b} f(x) \lim_{x \to b} v'(x) =
\lim_{x \to b} f' \lim_{x \to b} v(x) + \lim_{x \to b} f(x) \lim_{x \to b} v'(x).
\end{equation*}
After cancellation, one arrives at
\begin{equation*}
\lim_{x \to b} \big[ c \, scl \, w'(x) \big] \lim_{x \to b} v(x) = 
\lim_{x \to b} f'(x) \lim_{x \to b} v(x).
\end{equation*}
After division by $\lim_{x \to b} v(x)$ , we immediately see that 
\eqref{compa_cond_DN_weak_2} reduces to \eqref{compa_cond_DN_strong_explicit_2}.  
Consequently, we showed that the compatibility conditions in weak and strong forms are identical. 

\section{Rectification with Harmonic Functions} \label{sec:rectification}
In a coupling problem, both the BCs of the local problem and the
forcing function $f$ are given.  The BC values are not necessarily the
boundary values of $f$.  This situation creates a discrepancy for the
NL problem.  As explained in Sec.~\ref{subsec:compa_cond}, the BCs
are dictated by the boundary values of $f$.  In order to enforce the
BCs of the local problem, the NL solution needs, what we call,
a rectification.

Consider the local problem with mixed BCs
\begin{equation*} 
\left\{ \begin{aligned} 
-\Delta u & = f \\ u(a) & = \alpha \\ u'(b) & = \beta. 
\end{aligned} \right.
\end{equation*}
To motivate the rectification process in the NL case, we present an analog scenario in the local setting.  Assume that we find ourselves in a situation that only homogeneous BCs are allowed for the computation of $u$.  This would obviously lead to a ``wrong'' solution, which we denote as $w$ of the following problem:
\begin{equation} \label{wrong_solu_local}
\left\{ \begin{aligned}
-\Delta w & = f \\ w(a) & = 0 \\ w'(b) & = 0.
\end{aligned} \right.
\end{equation}
\magenta{One important question arises: Is it possible to rectify $w$ to obtain $u$?  The answer is yes, and it is due to a well-known decomposition of $u$.}

Assume that we solve the following additional problem with the correct BC:
\begin{equation*} 
\left\{ \begin{aligned} 
-\Delta H & = 0 \\ H(a) & = \alpha \\ H'(b) & = \beta.
\end{aligned} \right.
\end{equation*}
The function $H$ is called  the harmonic extension\footnote{The discrete harmonic extension is an important idea in domain decomposition and is heavily used in the DDM literature 
\cite{Quarteroni:1999:DDforPDEs,Toselli:2005:DDBook}.}.
When the solution $H$ is added to $w$, the sum gives the correct solution.  Hence, we say that $H$ rectifies the wrong solution $w$.  We essentially utilized the aforementioned decomposition of $u$:
\begin{equation} \label{u_decomp_local}
u = w + H.
\end{equation}

In the light of the decomposition \eqref{u_decomp_local}, we state the involved problems together:
\begin{equation} \label{local_prob_decomp}
\left\{ 
\begin{aligned} -\Delta u & = f \\ u(a) & = \alpha \\ u'(b) & = \beta \end{aligned} 
\right. ~ = ~ 
\left\{ 
\begin{aligned} -\Delta w & = f \\ w(a) & = 0 \\ w'(b) & = 0 \end{aligned} 
\right. ~ + ~ 
\left\{ 
\begin{aligned} -\Delta H & = 0 \\ H(a) & = \alpha \\ H'(b) & = \beta. \end{aligned} 
\right.
\end{equation}
The proof of the decomposition \eqref{u_decomp_local} is simply due to the linearity of $-\Delta_\DN$.  Next, we explain how one can rectify the NL solution.

\subsection{Rectification of the Nonlocal Solution}

Denote the local operator with mixed BCs by $-\Delta_\DN$.  The
self-adjoint governing operator $\M_\DN$ is constructed by using
functional calculus and is a function of the local operator, i.e.,
$\M_\DN = F(-\Delta_\DN)$ for some bounded function $F$ defined on the
spectrum of $-\Delta_\DN$.  Roughly speaking, the scaled operator $scl
\, \M_\DN$ is constructed as a generalization of $-\Delta_\DN$.  Our
rectification process is based on this observation.  In fact, for the
eigenvalues $\lambda_k, ~k=1, 2, \ldots$ of $\M_\DN$,  one can
show that \cite[Sec.~4]{aksoyluGazonas2020_dispersion}
$$\lim_{\delta \to 0} \lambda_k(scl \; \M_\DN) = \lambda_k(-\Delta_\DN).$$

In \eqref{local_prob_decomp}, we replace the problem
\eqref{wrong_solu_local} with its NL generalization using the same BC
as $\delta \to 0$ in the following way:
\begin{equation} \label{NL_prob_decomp}
\left\{ 
\begin{aligned} -\Delta u & =  f \\ u(a) & =  \alpha \\ u'(b) & =  \beta \end{aligned} 
\right. ~ \approx ~
\left\{ \begin{aligned}
scl \; \M_\DN w & =  f \\
w(a) & =  \delta^2/6 \, f(a) \\
w'(b) & =  \delta^2/6 \, f'(b)
\end{aligned} \right.  ~ + ~
\left\{ 
\begin{aligned} -\Delta H & =  0 \\ H(a) & =  \alpha \\ H'(b) & =  \beta. \end{aligned} 
\right.
\end{equation}
Note that the both the Dirichlet and Neumann BCs approach zero as $\delta \to 0$, i.e.,
$$
\lim_{\delta \to 0} w(a) = \lim_{\delta \to 0} \delta^2/6 \, f(a) = 0 \quad \text{and} \quad
\lim_{\delta \to 0} w'(b) =  \lim_{\delta \to 0} \delta^2/6 \, f'(b) = 0.
$$
\magenta{Consequently, the rationale of our rectification process can be stated as follows:  As $\delta \to 0$, the ``wrong'' NL problem in \eqref{NL_prob_decomp} converges to the
``wrong'' local problem in \eqref{local_prob_decomp}, hence, rectify it with a harmonic extension.}

So far, we discussed the rectification process in strong form.  Since we are interested in the weak form, we utilize the weak version of the decomposition in \eqref{u_decomp_local}:
\begin{equation} \label{harmonic_decomp_weak} 
\big(w(x)+H(x)\big)  v(x) = u(x)  v(x).
\end{equation}
Since $H$ is a harmonic function, it is a linear polynomial in 1D.  Define
$$H(x) := -(a_0 + a_1 x).$$
We prefer to insert a minus sign for ease of algebra, which soon is going to become clear. 

For the Dirichlet BC, the test function associated with $x=a$ is $v=\phi_a$.  Apply the Dirichlet BC to \eqref{harmonic_decomp_weak} and obtain:
\begin{eqnarray*}
\big(w(a) + H(a)\big) \phi_a(a) & = & u(a) \phi_a(a) \\
\delta^2/6 \, f(a) + H(a) & = & u(a),
\end{eqnarray*}
which implies
\begin{equation} \label{rec_lhs_Dir}
a_0 + a \, a_1 = \delta^2/6 \, f(a) - u(a).
\end{equation}

For the Neumann BC, the test function associated with $x=b$ is $v=\phi_b$.  Apply the Neumann BC to \eqref{harmonic_decomp_weak} and obtain:
\begin{equation*}
\frac{\diff}{\diff x}\big[\big( w(x) + H(x)\big) \phi_b(x) \big]\big|_{x=b} =
\frac{\diff}{\diff x}\big[u(x) \phi_b(x) \big] \big|_{x=b}.
\end{equation*}

Apply the product rule:
$$
\big[w'(x) + H'(x) \big] \phi_b(x) + \big[w(x) + H(x) \big] \phi_b'(x) = 
u'(x) \phi_b(x) + u(x) \phi_b'(x)
$$
Evaluate at $x=b$ and substitute $w'(b) = \delta^2/6 f'(b)$:
$$
\big[\delta^2/6 f'(b)+ H'(b) \big] + \big[w(b) + H(b) \big] \frac{1}{h} 
= u'(b) + u(b) \frac{1}{h},
$$
which implies
\begin{equation} \label{rec_lhs_Neu}
\frac{1}{h} a_0 + (1+\frac{b}{h}) a_1  = \big[\delta^2/6 f'(b) - u'(b)\big] + \big[w(b)-u(b)\big]/h.
\end{equation}
Combining \eqref{rec_lhs_Dir} and \eqref{rec_lhs_Neu}
\begin{equation*} 
\begin{bmatrix*}[c] 1 & a \\ & \\ \displaystyle\frac{1}{h} & 1+\displaystyle\frac{b}{h} \end{bmatrix*}
\begin{bmatrix*}[c] a_0 \\ a_1 \end{bmatrix*} = 
\begin{bmatrix*}[l] \delta^2/6 \, f(a) -\alpha \\ \\ \big[\delta^2/6 \, f'(b) - \beta \big]
+ \big[w(b) -u(b) \big]/h \end{bmatrix*}.
\end{equation*}
Rectification takes places after the solution $w$ is obtained, hence, the value $w(b)$ is known.  Since at $x=b$, the Neumann BC is enforced, the value $u(b)$ is not known.  Rectification in weak form demands this extra information; see \eqref{additional_lims}.

\subsection{The Case of $\BC=\NN$}
Solving a pure Neumann problem is more challenging than solving a
problem with at least one Dirichlet BC.  The operator $\M_\NN$ has a
nontrivial kernel which is spanned by the eigenfunction
\begin{equation} \label{e0_NN}
e_0^\NN(x) = \sqrt{\frac{1}{L}}.
\end{equation}
Namely,
\begin{equation*} 
\text{ker}(\M_\NN) = \text{span}(e_0^\NN).
\end{equation*}
Denote the eigenfunctions of the $\M_\NN$ by $e_k^\NN$  and define the space
\begin{equation} \label{one_perp}
\Lo^2(\Omega) := \big(e_0^\NN\big)^\perp = \text{span}(\big\{e_k^\NN: k \in \mathbb{N} \setminus \{0\} \big\}).
\end{equation}
Thanks to \eqref{e0_NN} and \eqref{one_perp}, the operator $\M_\NN$ is
a bijection if its domain and range are restricted to functions that
are orthogonal to the constant function.  See the unisolvent
discussion in \cite{aksoyluCelikerDiehl2024_implementation}.
Consequently, for the case of $\BC=\NN$, we end up with the following
the rectification system:
$$\begin{bmatrix*}[c] -\displaystyle\frac{1}{h} & 1 - \displaystyle\frac{a}{h} \\ & \\ \displaystyle\frac{1}{h} & 1+\displaystyle\frac{b}{h} \end{bmatrix*}
\begin{bmatrix*}[c] a_0 \\ a_1 \end{bmatrix*} = 
\begin{bmatrix*}[l] \big[\delta^2/6 \, f'(a) - \alpha \big]
+ \big[w(a) -u(a) \big]/h \\ \\ \big[\delta^2/6 \, f'(b) - \beta \big]
+ \big[w(b) -u(b) \big]/h \end{bmatrix*}.
$$
For the pure Neumann problem, the rectification process calls for the function values of the solution, i.e., $u(a)$ and $u(b)$, at the boundary.  The need for these values is an inevitable additional cost of compatibility conditions in weak form; see Remark~\ref{rem:no_free_lunch_thm}.

\section{Numerical Experiments} \label{sec:numerical_experiments}

\begin{table}[t]
\caption{Number of elements employed on the domain $(a,b) = (-2.00,3.25)$ for 3-subdomain, 2-subdomain, and a single domain configuration and the corresponding grid sizes}
\label{table:no_elements_grid_size}
\begin{center}
\begin{tabular}{cc@{\hspace{.2cm}}cc@{\hspace{.2cm}}cc@{\hspace{.2cm}}cc}
\toprule
& \multicolumn{3}{c}{3SD} &\multicolumn{2}{c}{2SD} &\multicolumn{1}{c}{1SD} 
\vspace{-.1in} \\
& \uline{3} &\uline{2} &\uline{1}\\
Grid &$N_1$ &$N_2$ &$N_3$ &$N_1$ &$N_2$ &$N$ &$h$ \\
\toprule
4 &64     &64     &64     &96     &96     &192   &2.73E-02 \\
5 &128   &128   &128   &192   &192   &384   &1.37E-02 \\
6 &256   &256   &256   &384   &384   &768   &6.84E-03 \\
7 &512   &512   &512   &768   &768   &1536 &3.42E-03 \\
8 &1024 &1024 &1024 &1536 &1536 &3072 &1.71E-03 \\
\bottomrule
\end{tabular}
\end{center}
\end{table}

For discretization of the local and NL problems, the finite element
method and the Galerkin projection are utilized, respectively, with a
nodal linear basis.  We compute an approximate solution $u_h$ and
report the $L^2$-norm of the error $u-u_h$.  The ``Grid'' column
indicates the number of elements used in the discretization.  More
explicitly, Grid = $i$ contains $24 \times 2^{i-1}$, $i=4, \ldots, 8$,
elements.  In coupled problems, subdomains equally share the elements
of the single domain.  More precisely, in 3-subdomain coupled
configurations, subdomain $\Omega_k, k=1, 2, 3$ contains $8 \times
2^{i-1}$ elements. In 2-subdomain coupled configurations, subdomain
$\Omega_k, k=1, 2$ contains $12 \times 2^{i-1}$ elements. For a fair
comparison, we kept the grid size equal for all configuration at all
levels.  See the details in Table~\ref{table:no_elements_grid_size}.

The exact solutions in each BC case are given below:
\begin{equation*}
\begin{aligned}
&\DD: && u(x) = \cos(\pi x) \\
&\DN: && u(x) = \exp(x) + \exp(-x) \\
&\NN1: && u(x) = \cos(\pi x) - \beta_1 \sin(2 \pi x) - \beta_2 \cos(3 \pi x) \\
&\NN2: && u(x) = \sqrt{2/L} \cos(2 \pi (a-x) /L),
\end{aligned}
\end{equation*}
where
\begin{equation*}
\beta_1 = \frac{-16 \big(\sin(\pi a) - \sin(\pi b)\big)}{5 \big(\cos(2 \pi a) - \cos(2 \pi b) \big)}  \quad \text{and} \quad
\beta_2 =  \frac{-9 \big(\sin(\pi a) - \sin(\pi b)\big)}{5 \big(\sin(3 \pi a) - \sin(3 \pi b)\big)},
\end{equation*} 
which guarantees the orthogonality to the constant function required
by the pure Neumann problem.  

We elaborate on the design of the experiments.  In order to establish
that our coupling method works for an arbitrary solution, we chose exact
solutions from different families of functions.  The exact solution in
test cases $\DD$ and $\DN$ are oscillatory and have exponential growth,
respectively.  The pure Neumann test cases $\NN1$ and $\NN2$ are
designed in such a way that both the exact solution and the forcing
function are orthogonal to the constant function, thereby,
guaranteeing a solution to the singular system.  The exact solution in
test case $\NN1$ is a nontrivial sinusoidal function.  The function in
$\NN2$ is an eigenfunction of the operator $\M_\NN$.

We report the numerical experiment in Tables \ref{tab:L-NL-L}, \ref{tab:NL-L-NL}, \ref{tab:NL-NL}, \ref{tab:L-L} and \ref{tab:NL}.  The corresponding solutions are shown in Figures \ref{fig:L-NL-L_coupling_DD1_DN1}, \ref{fig:L_NL_L_coupling_NN1_NN2}, \ref{fig:NL_L_NL_coupling_DD1_DN1}, \ref{fig:NL-L-NL_coupling_NN1_NN2}, \ref{fig:NL_NL_coupling_DD1_DN1}, \ref{fig:NL_NL_coupling_NN1_NN2}.  The kernel of choice is the flat-top given in \eqref{flat_top_kernel} with a horizon of $\delta=3h$.

\begin{table}[htbp]
\caption{History of convergence of L-NL-L coupling with $(a,e_{12},e_{23},b) = (-2.00,-0.25,1.50,3.25)$}
\label{tab:L-NL-L}
\begin{center}
\begin{tabular}{ccc@{\hspace{.2cm}}cc@{\hspace{.2cm}}cc@{\hspace{.2cm}}cc@{\hspace{.2cm}}cc}
\toprule
& \multicolumn{2}{c}{$\DD$} &\multicolumn{2}{c}{$\DN$} &\multicolumn{2}{c}{$\NN1$} &\multicolumn{2}{c}{$\NN2$}
\vspace{-.1in} \\
& \uline{2} &\uline{2} &\uline{2} &\uline{2}\\
Grid &Error &Rate &Error &Rate &Error &Rate &Error &Rate\\
\toprule
4 &2.22E-01 &0.95 &5.47E-01 &1.01 &2.89E+00 &1.04 &7.45E-02 &1.00 \\
5 &1.13E-01 &0.97 &2.72E-01 &1.01 &1.43E+00 &1.02 &3.73E-02 &1.00 \\
6 &5.72E-02 &0.99 &1.36E-01 &1.00 &7.12E-01 &1.01 &1.87E-02 &1.00 \\
7 &2.87E-02 &0.99 &6.78E-02 &1.00 &3.55E-01 &1.00 &9.33E-03 &1.00 \\
8 &1.44E-02 &1.00 &3.39E-02 &1.00 &1.77E-01 &1.00 &4.67E-03 &1.00 \\
\bottomrule
\end{tabular}
\end{center}
\end{table}

\begin{table}[htbp]
\caption{History of convergence of NL-L-NL coupling with $(a,e_{12},e_{23},b) = (-2.00,-0.25,1.50,3.25)$}
\label{tab:NL-L-NL}
\begin{center}
\begin{tabular}{ccc@{\hspace{.2cm}}cc@{\hspace{.2cm}}cc@{\hspace{.2cm}}cc@{\hspace{.2cm}}cc@{\hspace{.2cm}}cc}
\toprule
& \multicolumn{2}{c}{$\DD$} &\multicolumn{2}{c}{$\DN$} &\multicolumn{2}{c}{$\NN1$} &\multicolumn{2}{c}{$\NN2$}
\vspace{-.1in} \\
& \uline{2} &\uline{2} &\uline{2} &\uline{2}\\
Grid &Error &Rate &Error &Rate &Error &Rate &Error &Rate\\
\toprule
4 &2.56E-01 &1.00 &2.00E+00 &1.00 &3.42E+00 &0.99 &9.22E-02 &1.01 \\
5 &1.28E-01 &1.00 &1.00E+00 &1.00 &1.73E+00 &0.99 &4.59E-02 &1.01 \\
6 &6.41E-02 &1.00 &5.00E-01 &1.00 & 8.67E-01 &0.99 &2.29E-02 &1.00 \\
7 &3.21E-02 &1.00 &2.50E-01 &1.00 &4.35E-01 &1.00 &1.14E-02 &1.00 \\
8 &1.60E-02 &1.00 &1.25E-01 &1.00 &2.18E-01 &1.00 &5.72E-03 &1.00 \\
\bottomrule
\end{tabular}
\end{center}
\end{table}

\begin{table}[htbp]
\caption{History of convergence of NL-NL coupling with $(a,e_{12},b) = (-2.00,0.62,3.25)$}
\label{tab:NL-NL}
\begin{center}
\begin{tabular}{ccc@{\hspace{.2cm}}cc@{\hspace{.2cm}}cc@{\hspace{.2cm}}cc@{\hspace{.2cm}}cc}
\toprule
& \multicolumn{2}{c}{$\DD$} &\multicolumn{2}{c}{$\DN$} &\multicolumn{2}{c}{$\NN1$} &\multicolumn{2}{c}{$\NN2$}
\vspace{-.1in} \\
& \uline{2} &\uline{2} &\uline{2} &\uline{2}\\
Grid &Error &Rate &Error &Rate &Error &Rate &Error &Rate\\
\toprule
4 &3.48E-01 &0.92 &3.49E+00 &1.00 & 3.00E+00 &1.05 &8.40E-04 &2.00 \\
5 &1.79E-01 &0.96 &1.75E+00 &1.00 &1.49E+00 &1.01 & 2.10E-04 &2.00 \\
6 &9.08E-02 &0.98 &8.73E-01 &1.00 & 7.41E-01 &1.00 & 5.25E-05 &2.00 \\
7 &4.57E-02 &0.99 &4.36E-01 &1.00 & 3.70E-01 &1.00 & 1.31E-05 &2.00 \\
8 &2.30E-02 &0.99 &2.18E-01 &1.00 & 1.85E-01 &1.00 & 3.28E-06 &2.00 \\
\bottomrule
\end{tabular}
\end{center}
\end{table}

\begin{table}[htbp]
\caption{History of convergence of L-L coupling with $(a,e_{12},b) = (-2.00,0.62,3.25)$}
\label{tab:L-L}
\begin{center}
\begin{tabular}{ccc@{\hspace{.2cm}}cc@{\hspace{.2cm}}cc@{\hspace{.2cm}}cc@{\hspace{.2cm}}cc}
\toprule
& \multicolumn{2}{c}{$\DD$} & \multicolumn{2}{c}{$\DN$} &\multicolumn{2}{c}{$\NN1$} &\multicolumn{2}{c}{$\NN2$}
\vspace{-.1in} \\
& \uline{2} &\uline{2} &\uline{2} &\uline{2}\\
Grid &Error &Rate &Error &Rate &Error &Rate &Error &Rate\\
\toprule
4 &2.01E-03 &2.00 &7.30E-03 &2.00 & 9.42E-02 &2.00 &1.78E-04 &2.00 \\
5 &5.04E-04 &2.00 &1.83E-03 &2.00 & 2.35E-02 &2.00 &4.46E-05 &2.00 \\
6 &1.26E-04 &2.00 &4.56E-04 &2.00 & 5.88E-03 &2.00 &1.12E-05 &2.00 \\
7 &3.15E-05 &2.00 &1.14E-04 &2.00 & 1.47E-03 &2.00 &2.79E-06 &2.00 \\
8 &7.87E-06 &2.00 &2.85E-05 &2.00 & 3.67E-04 &2.00 &6.97E-07 &2.00 \\
\bottomrule
\end{tabular}
\end{center}
\end{table}

\begin{table}[htbp]
\caption{History of convergence of the nonlocal problem with $(a,b) = (-2.00,3.25)$}
\label{tab:NL}
\begin{center}
\begin{tabular}{ccc@{\hspace{.2cm}}cc@{\hspace{.2cm}}cc@{\hspace{.2cm}}cc@{\hspace{.2cm}}cc}
\toprule
& \multicolumn{2}{c}{$\DD$} &\multicolumn{2}{c}{$\DN$} &\multicolumn{2}{c}{$\NN1$} &\multicolumn{2}{c}{$\NN2$}
\vspace{-.1in} \\
& \uline{2} &\uline{2} &\uline{2} &\uline{2} \\
Grid &Error &Rate &Error &Rate &Error &Rate &Error &Rate\\
\toprule
4 &1.29E-02 &1.99 &1.75E-02 &2.07 &1.83E-01 &2.24 &8.40E-04 &2.00 \\
5 &3.23E-03 &2.00 &4.26E-03 &2.04 &4.23E-02 &2.11 &2.10E-04 &2.00 \\
6 &8.09E-04 &2.00 &1.05E-03 &2.02 &1.02E-02 &2.05 &5.25E-05 &2.00 \\
7 &2.02E-04 &2.00 &2.61E-04 &2.01 &2.51E-03 &2.02 &1.31E-05 &2.00 \\
8 &5.06E-05 &2.00 &6.50E-05 &2.00 &6.22E-04 &2.01 &3.28E-06 &2.00 \\
\bottomrule
\end{tabular}
\end{center}
\end{table}

\begin{figure}[tb]
\centering
\begin{subfigure}[b]{0.47\textwidth}
\includegraphics[width=\textwidth]{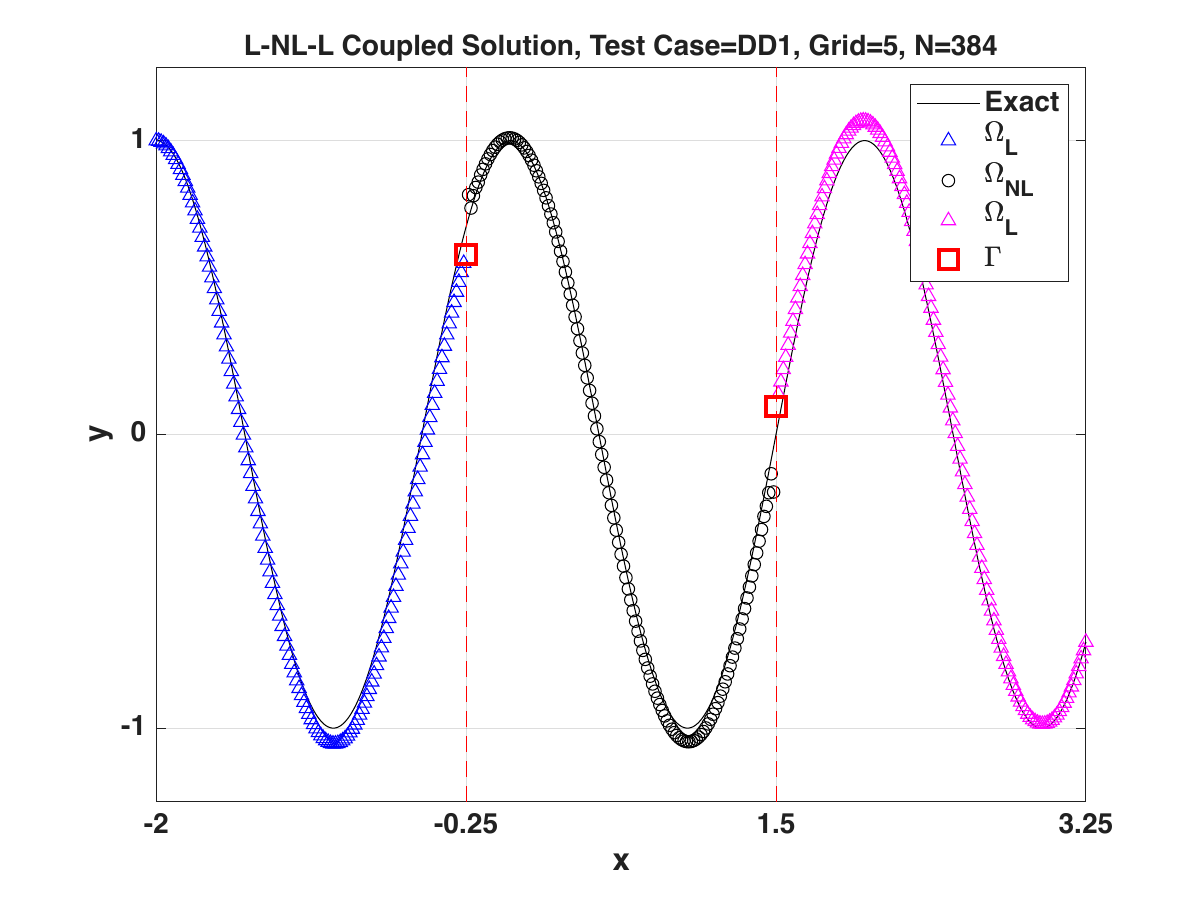}
\end{subfigure}
\begin{subfigure}[b]{0.47\textwidth}
\includegraphics[width=\textwidth]{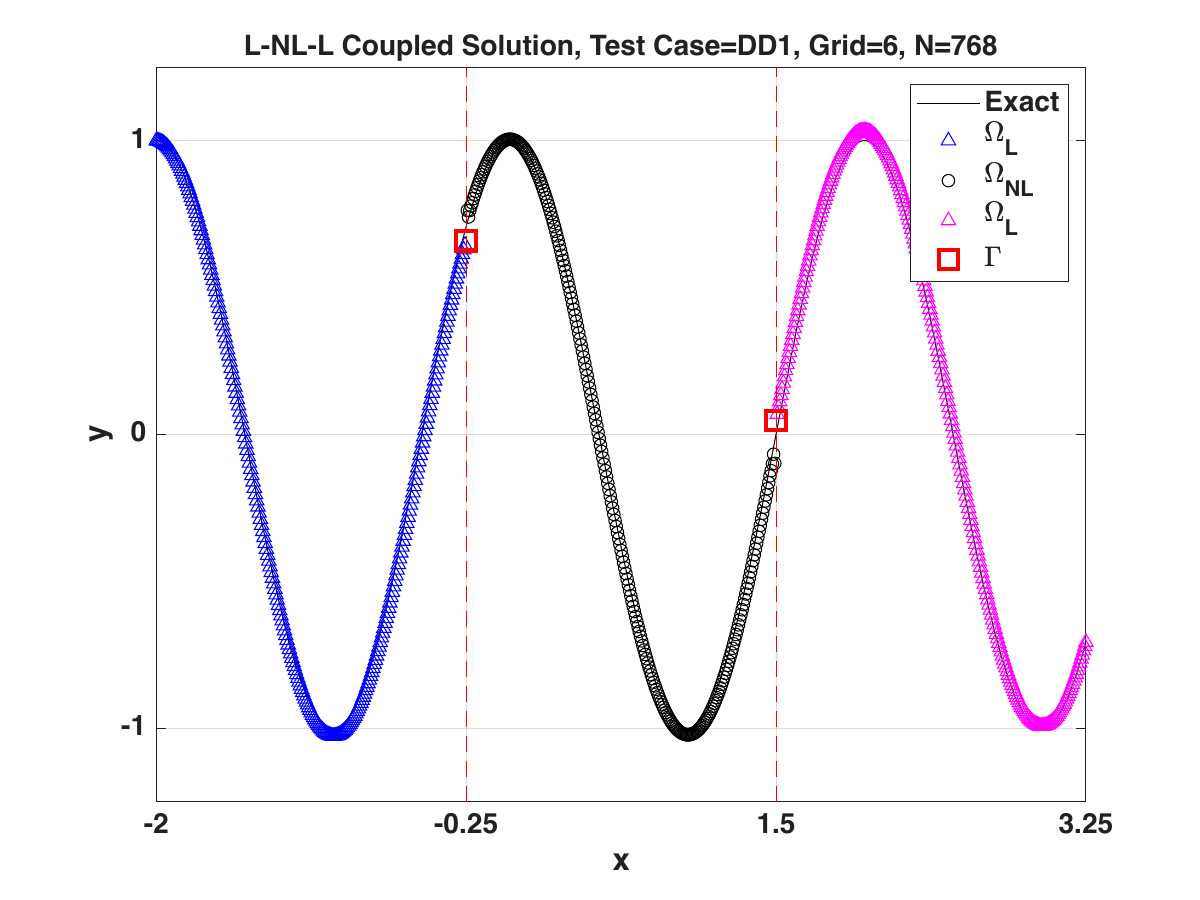}
\end{subfigure}
\begin{subfigure}[b]{0.47\textwidth}
\includegraphics[width=\textwidth]{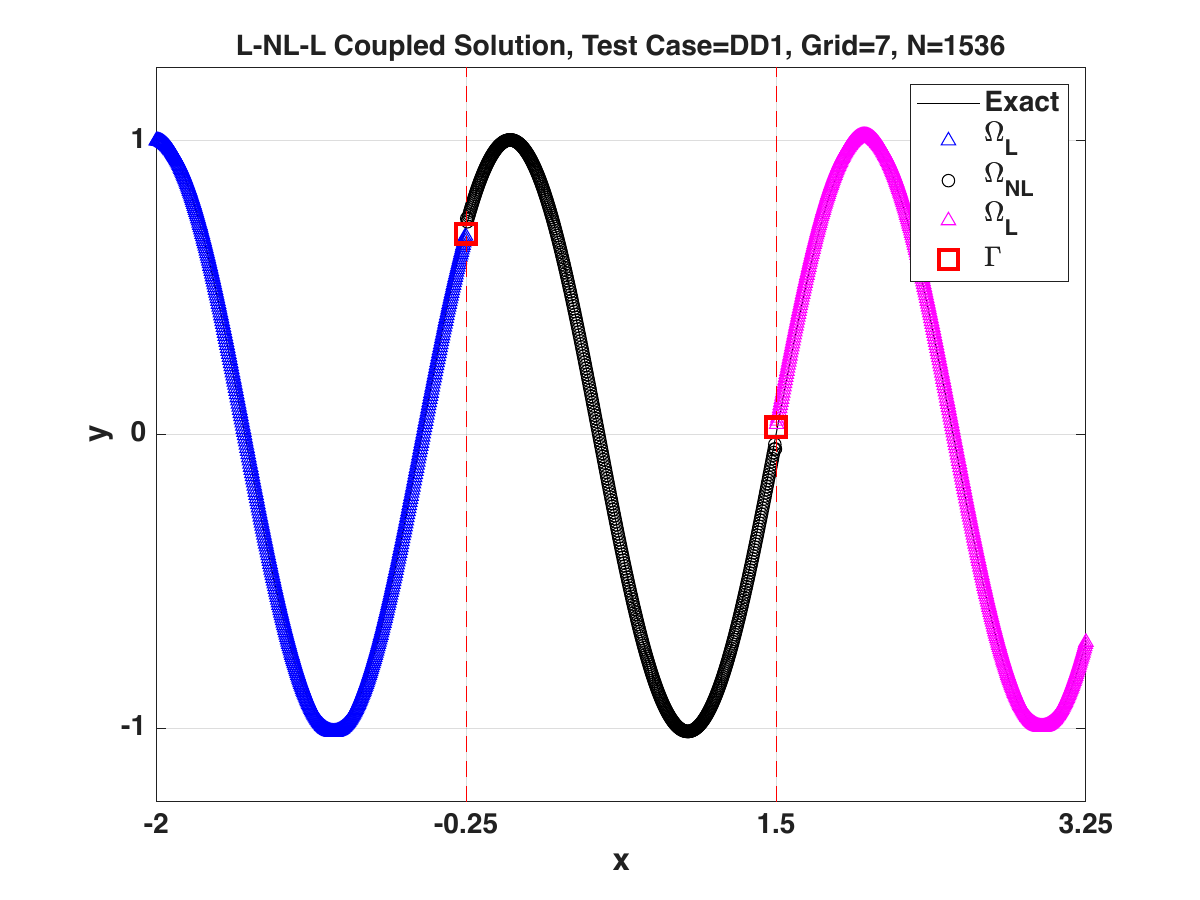}
\end{subfigure}
\begin{subfigure}[b]{0.47\textwidth}
\includegraphics[width=\textwidth]{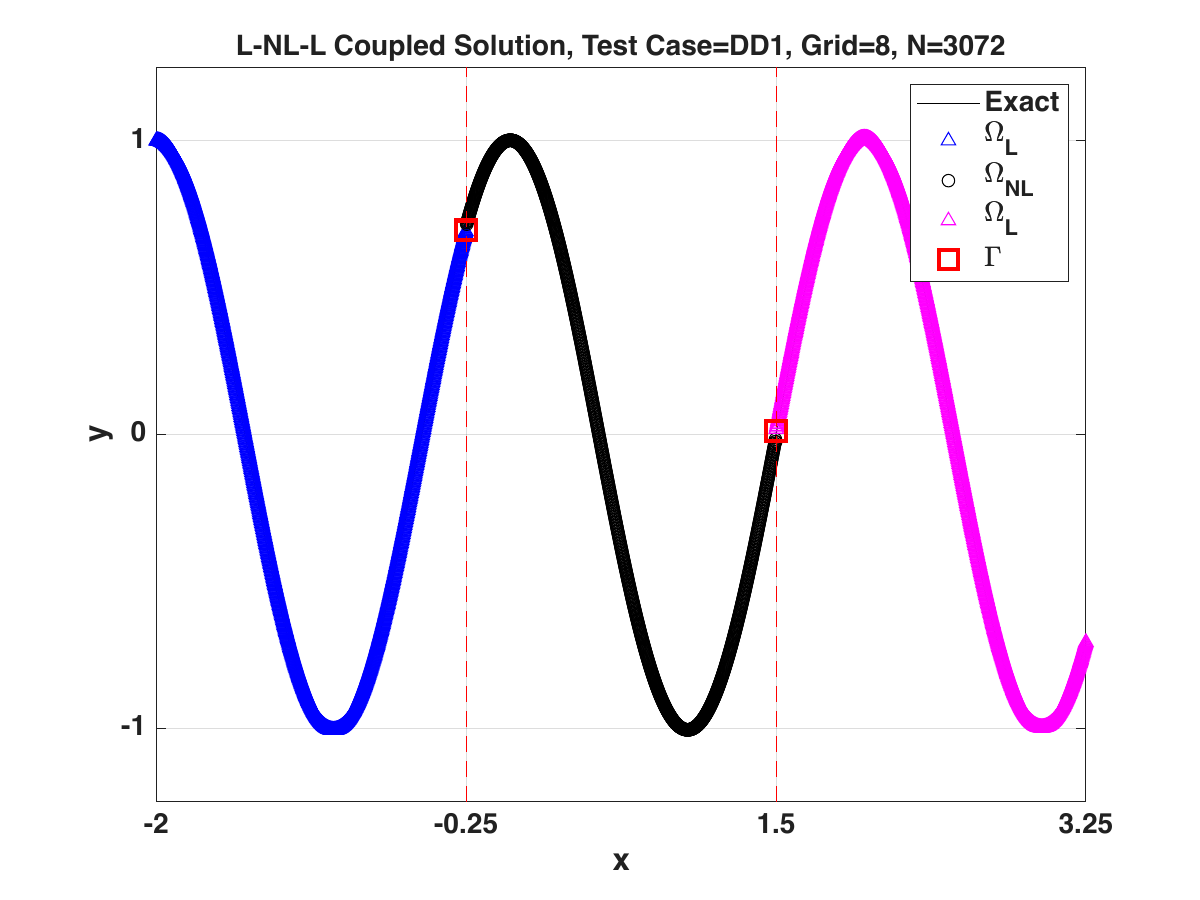}
\end{subfigure}
\begin{subfigure}[b]{0.47\textwidth}
\includegraphics[width=\textwidth]{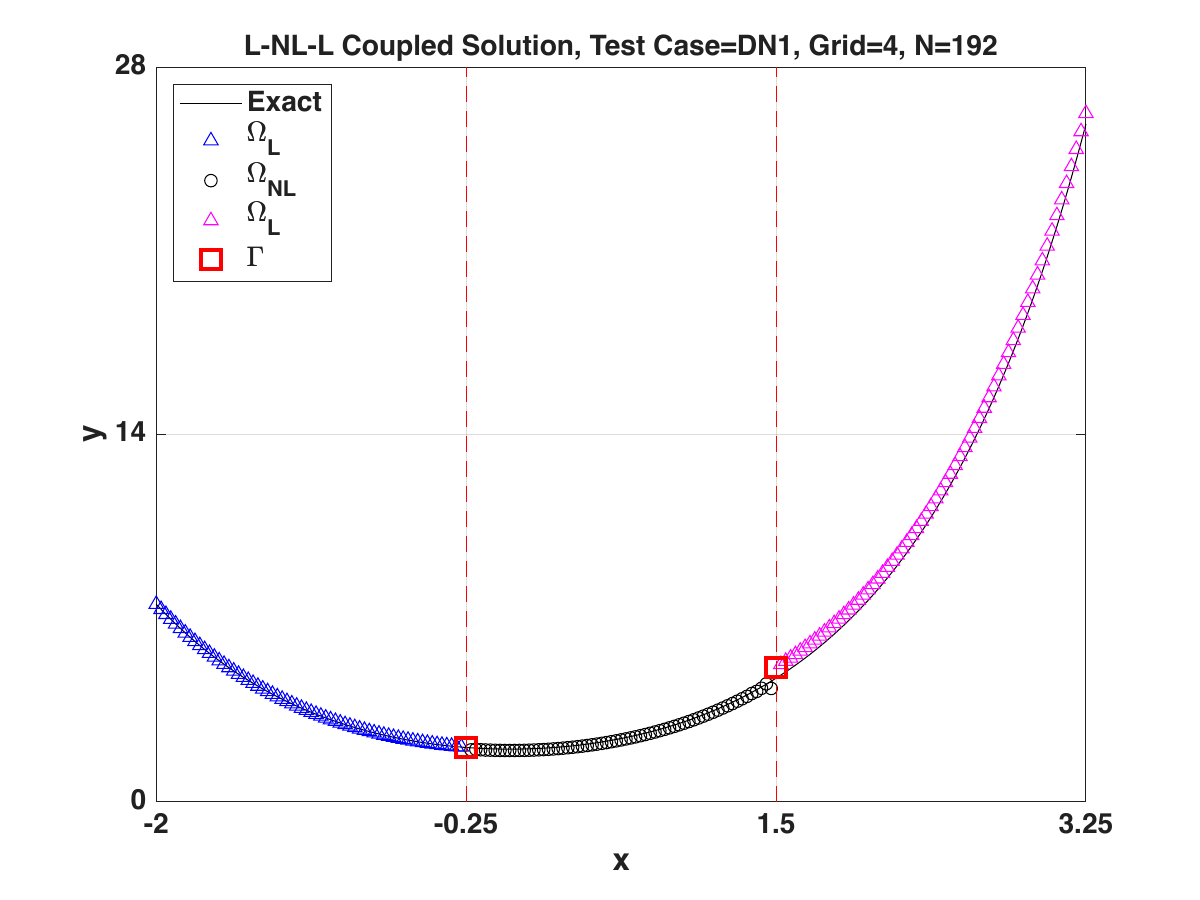}
\end{subfigure}
\begin{subfigure}[b]{0.47\textwidth}
\includegraphics[width=\textwidth]{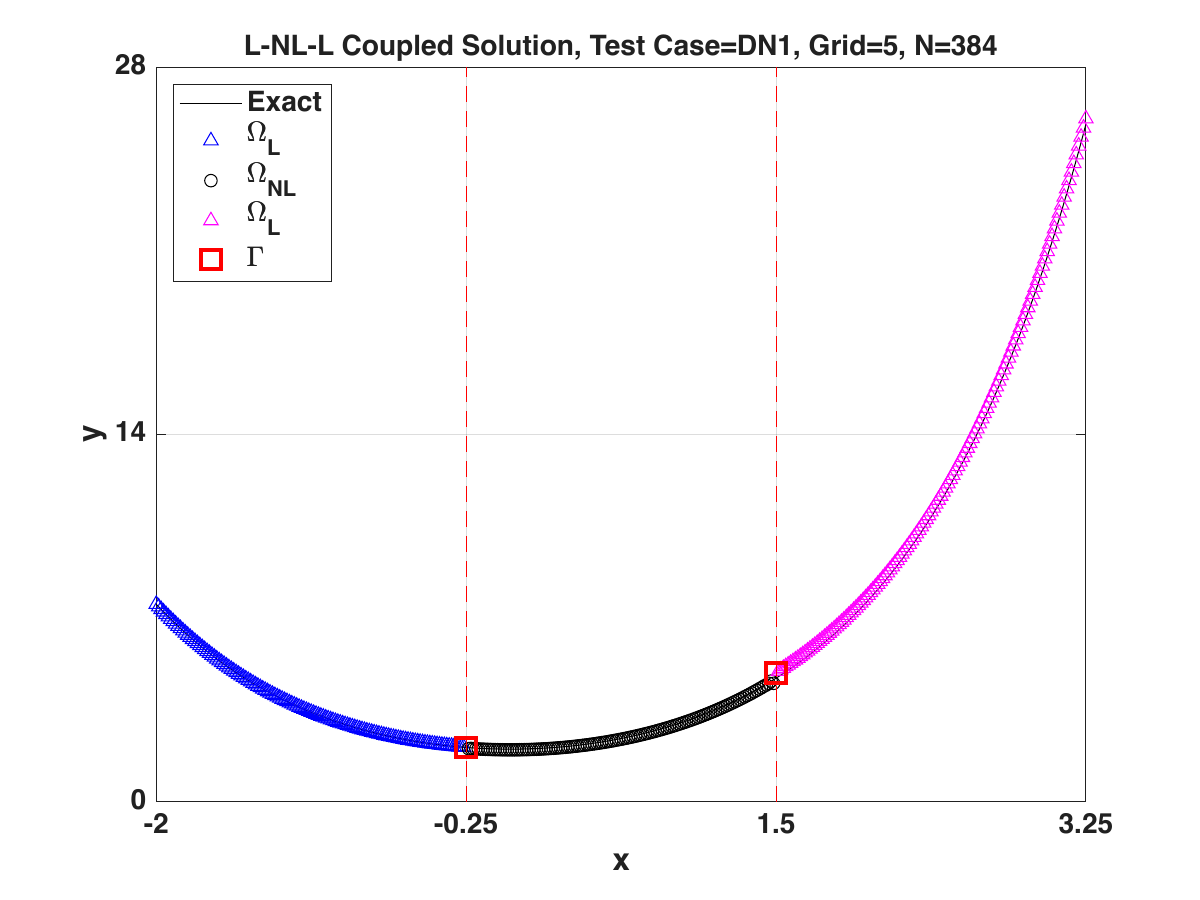}
\end{subfigure}
\begin{subfigure}[b]{0.47\textwidth}
\includegraphics[width=\textwidth]{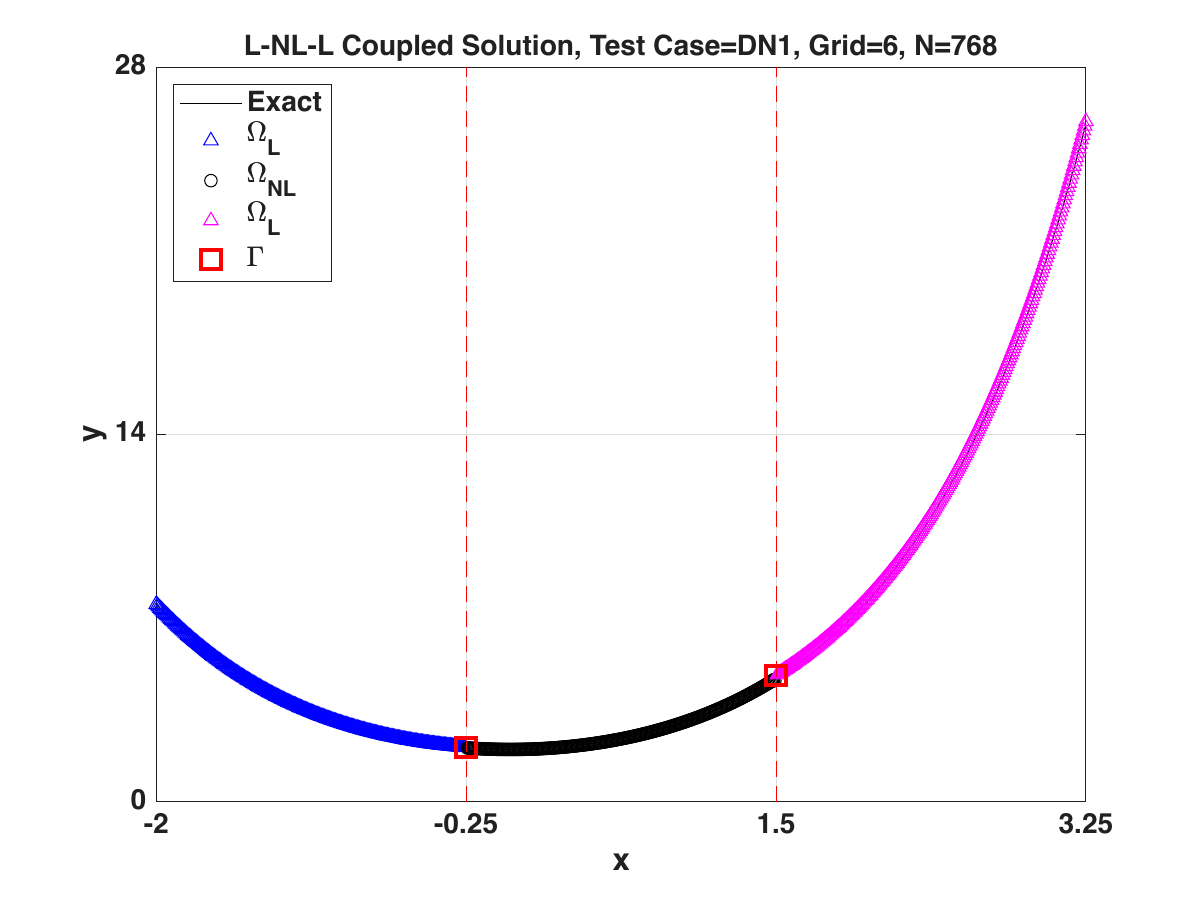}
\end{subfigure}
\begin{subfigure}[b]{0.47\textwidth}
\includegraphics[width=\textwidth]{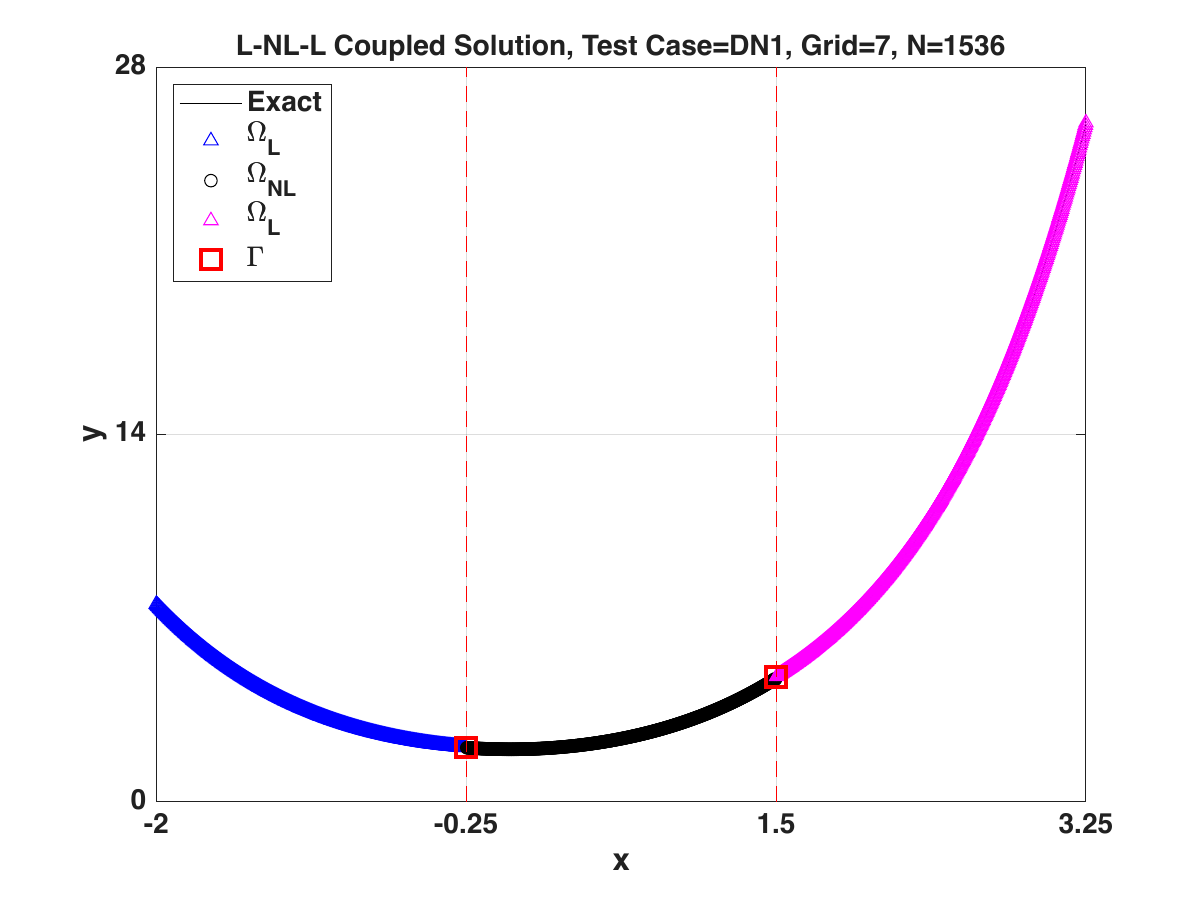}
\end{subfigure}
\caption{L-NL-L coupling with $\BC=\DD$ (top 2 rows) $\BC=\DN$ (bottom 2 rows)}
\label{fig:L-NL-L_coupling_DD1_DN1}
\end{figure}

\begin{figure}[tb]
\centering
\begin{subfigure}[b]{0.47\textwidth}
\includegraphics[width=\textwidth]{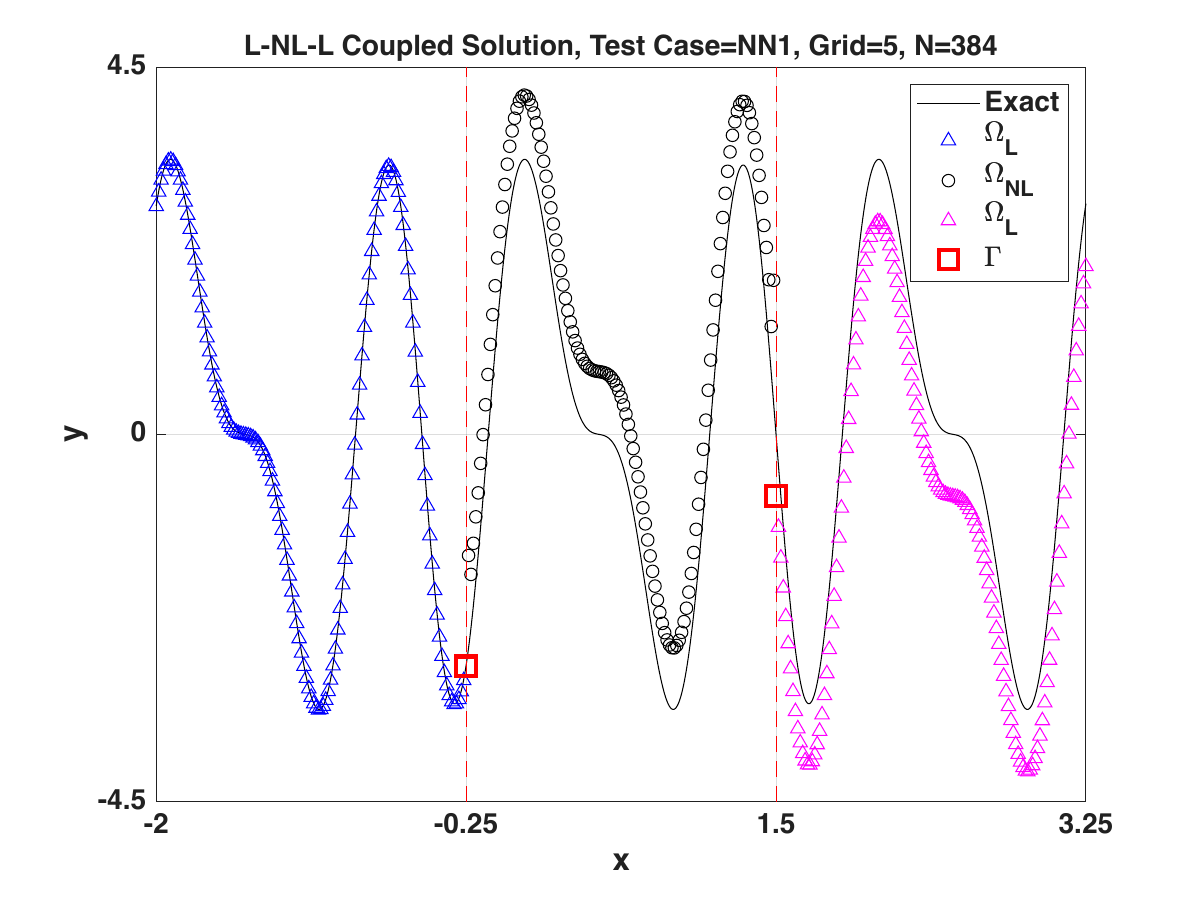}
\end{subfigure}
\begin{subfigure}[b]{0.47\textwidth}
\includegraphics[width=\textwidth]{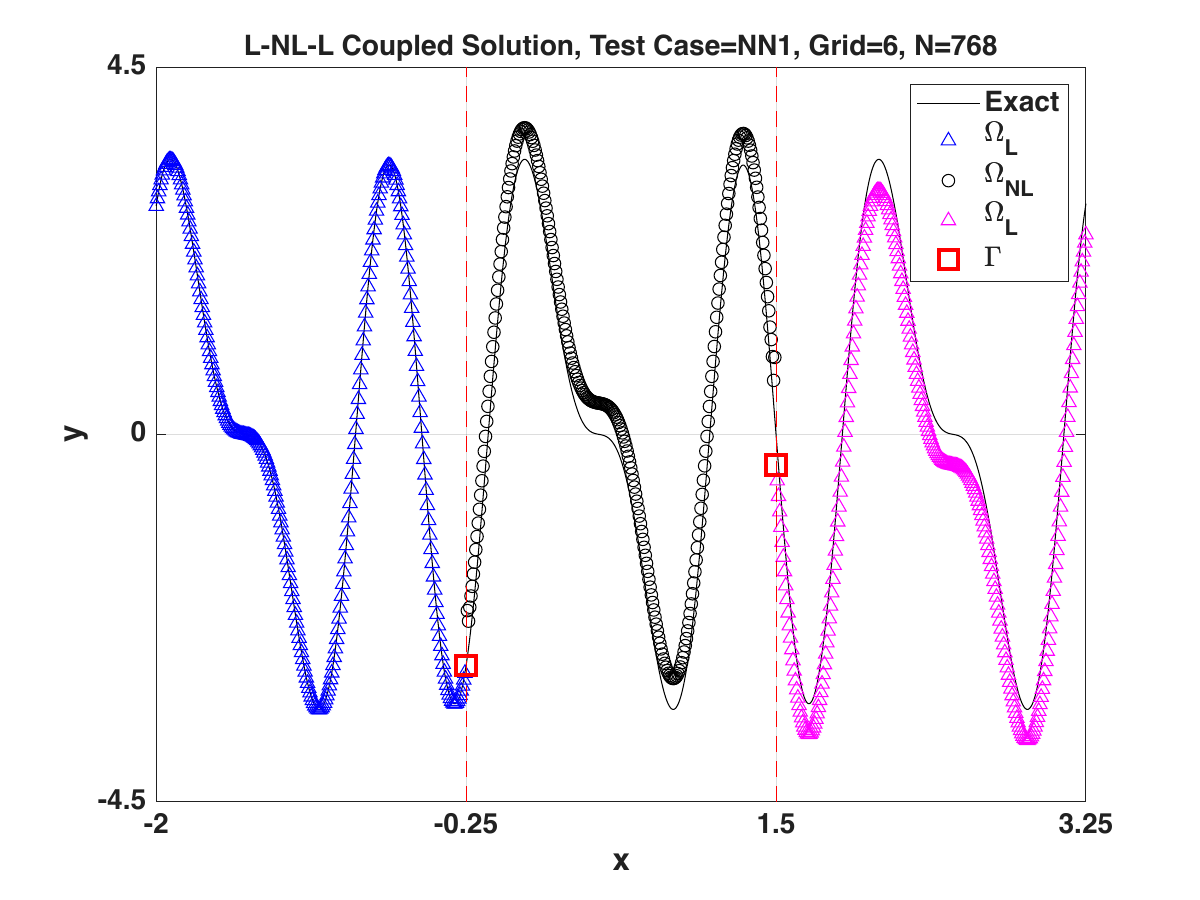}
\end{subfigure}
\begin{subfigure}[b]{0.47\textwidth}
\includegraphics[width=\textwidth]{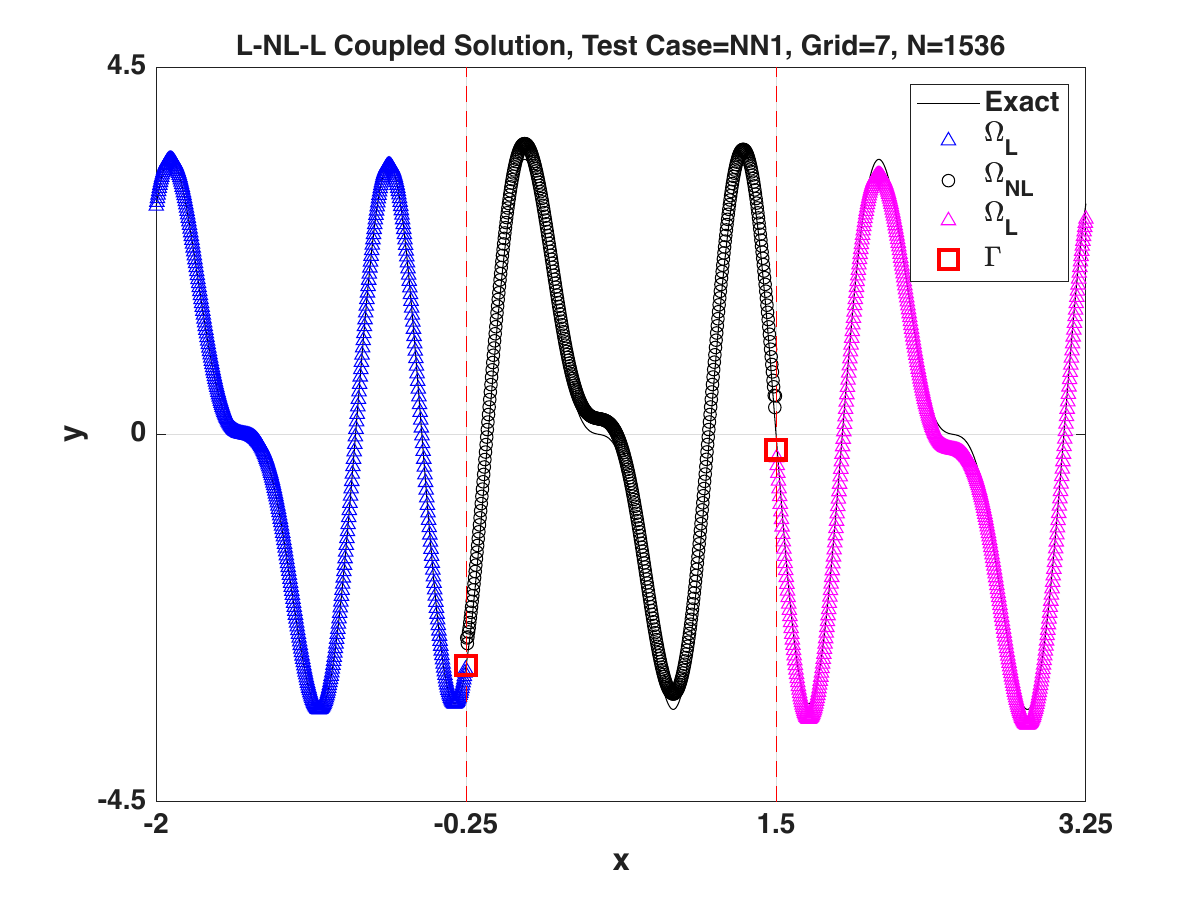}
\end{subfigure}
\begin{subfigure}[b]{0.47\textwidth}
\includegraphics[width=\textwidth]{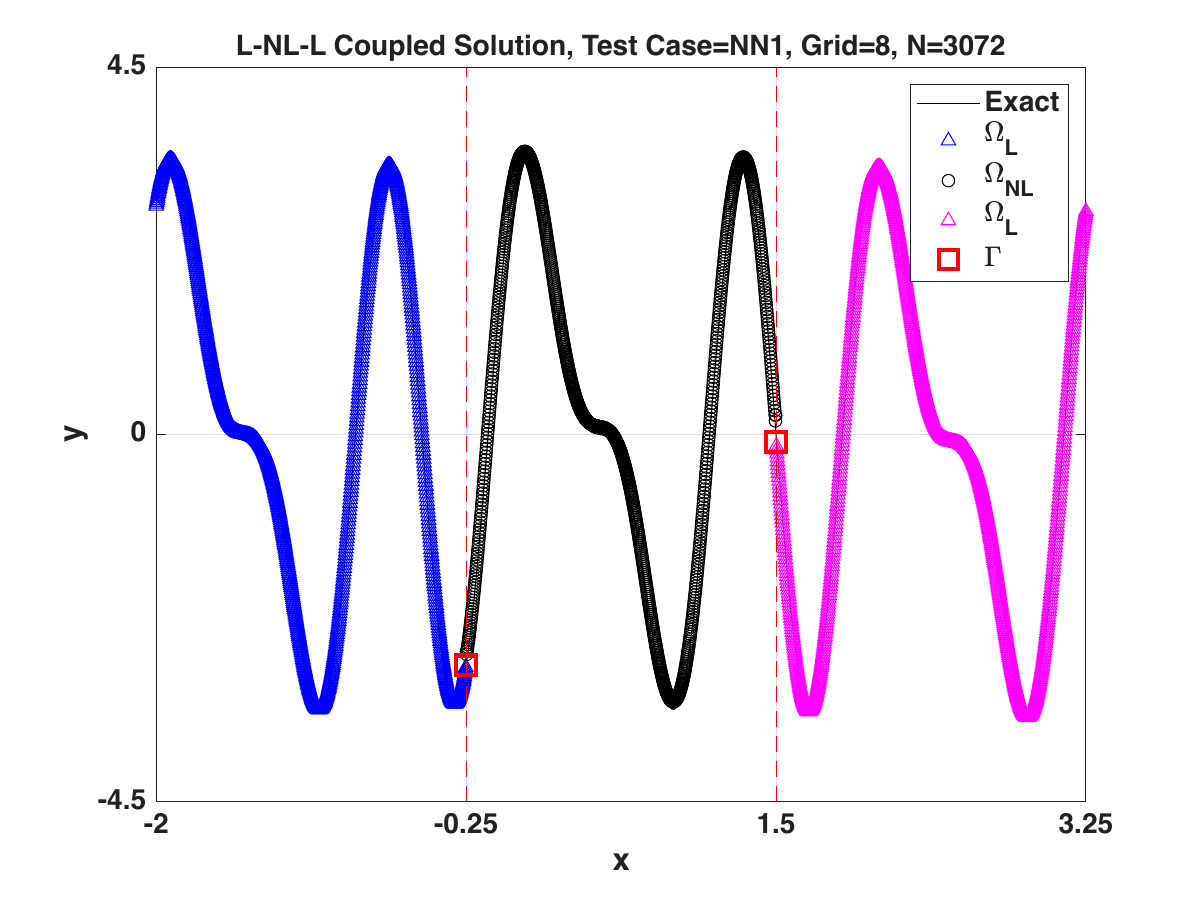}
\end{subfigure}
\begin{subfigure}[b]{0.47\textwidth}
\includegraphics[width=\textwidth]{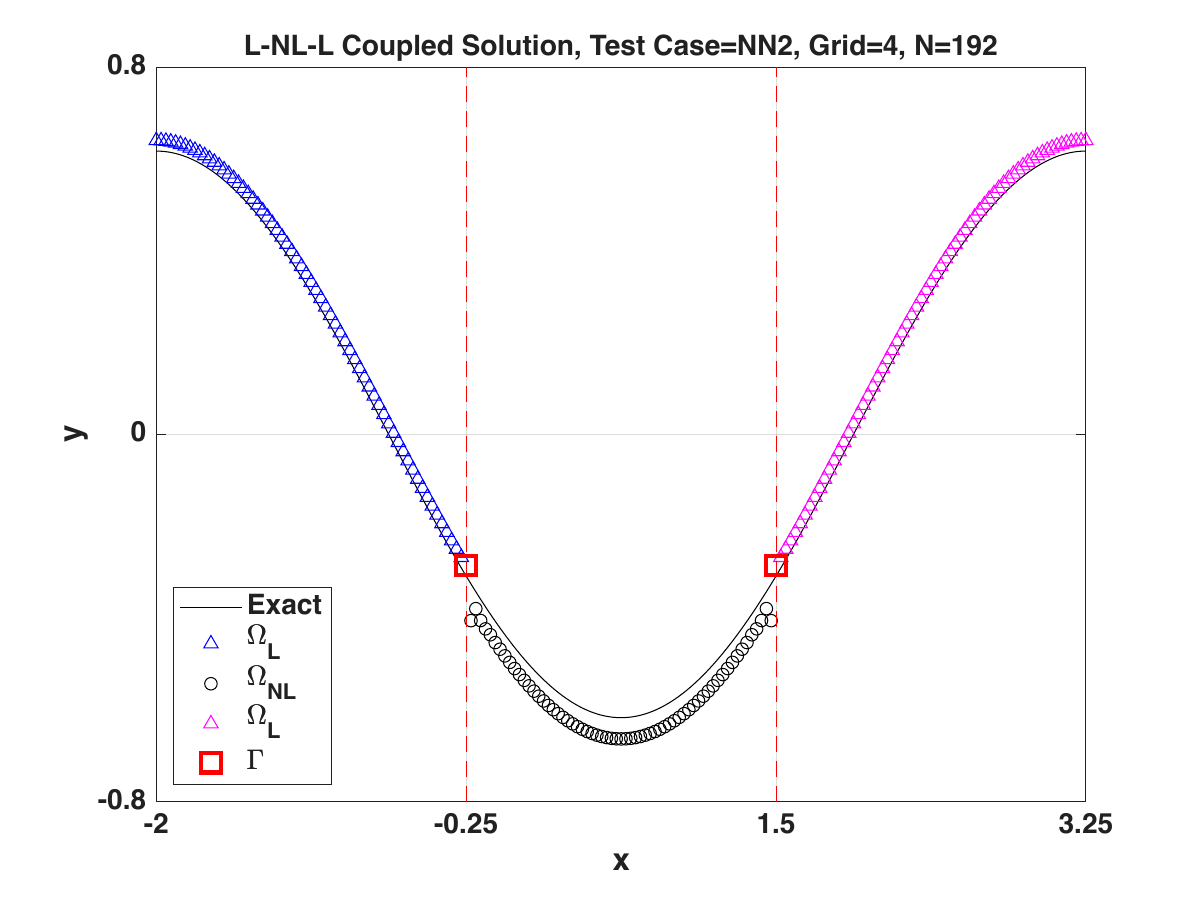}
\end{subfigure}
\begin{subfigure}[b]{0.47\textwidth}
\includegraphics[width=\textwidth]{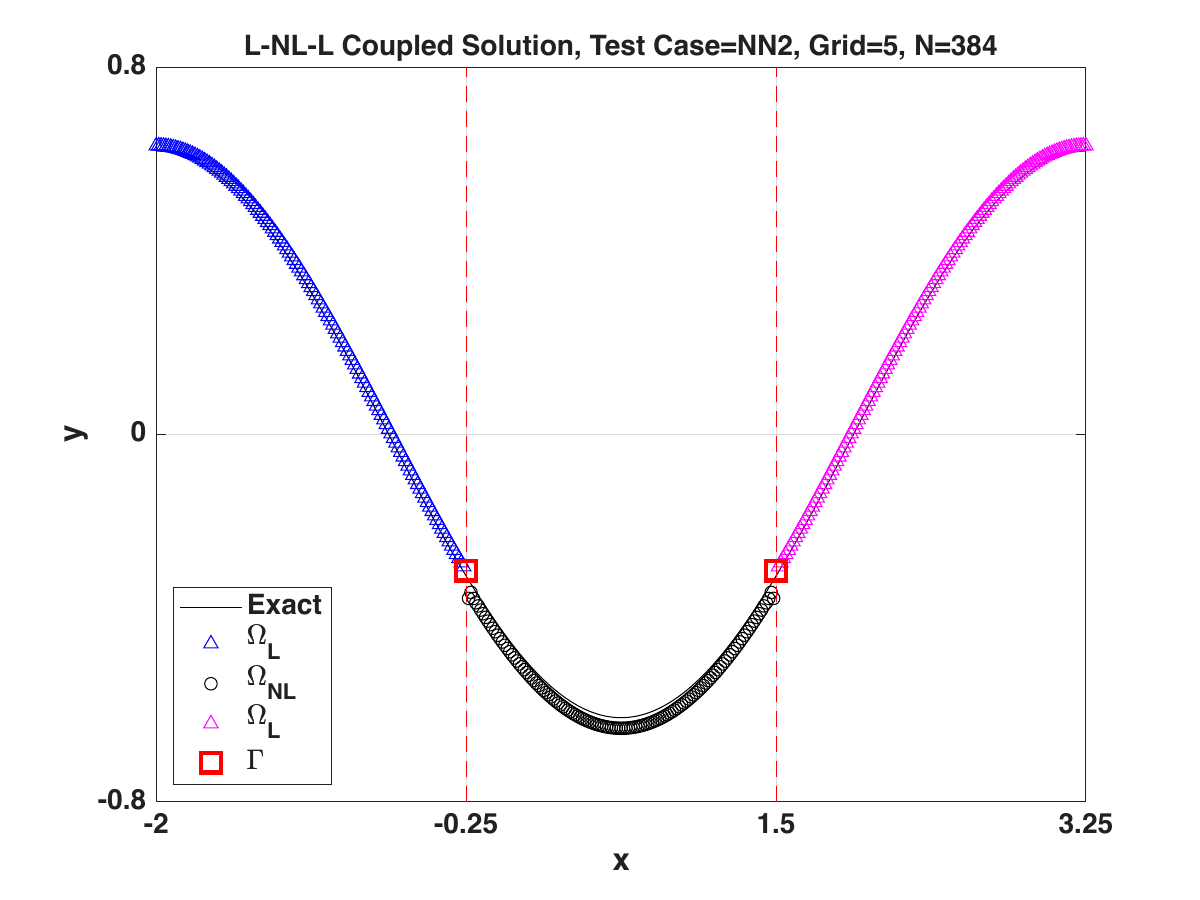}
\end{subfigure}
\begin{subfigure}[b]{0.47\textwidth}
\includegraphics[width=\textwidth]{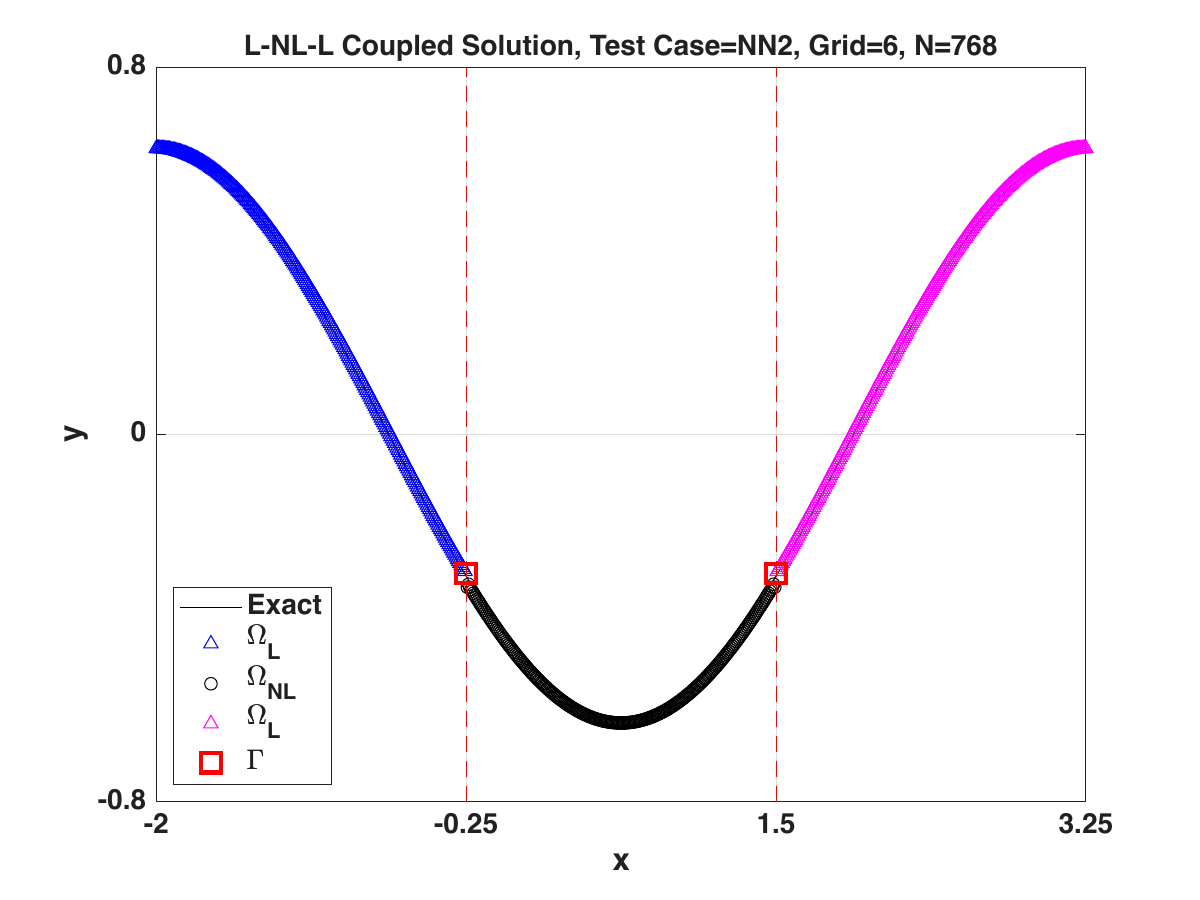}
\end{subfigure}
\begin{subfigure}[b]{0.47\textwidth}
\includegraphics[width=\textwidth]{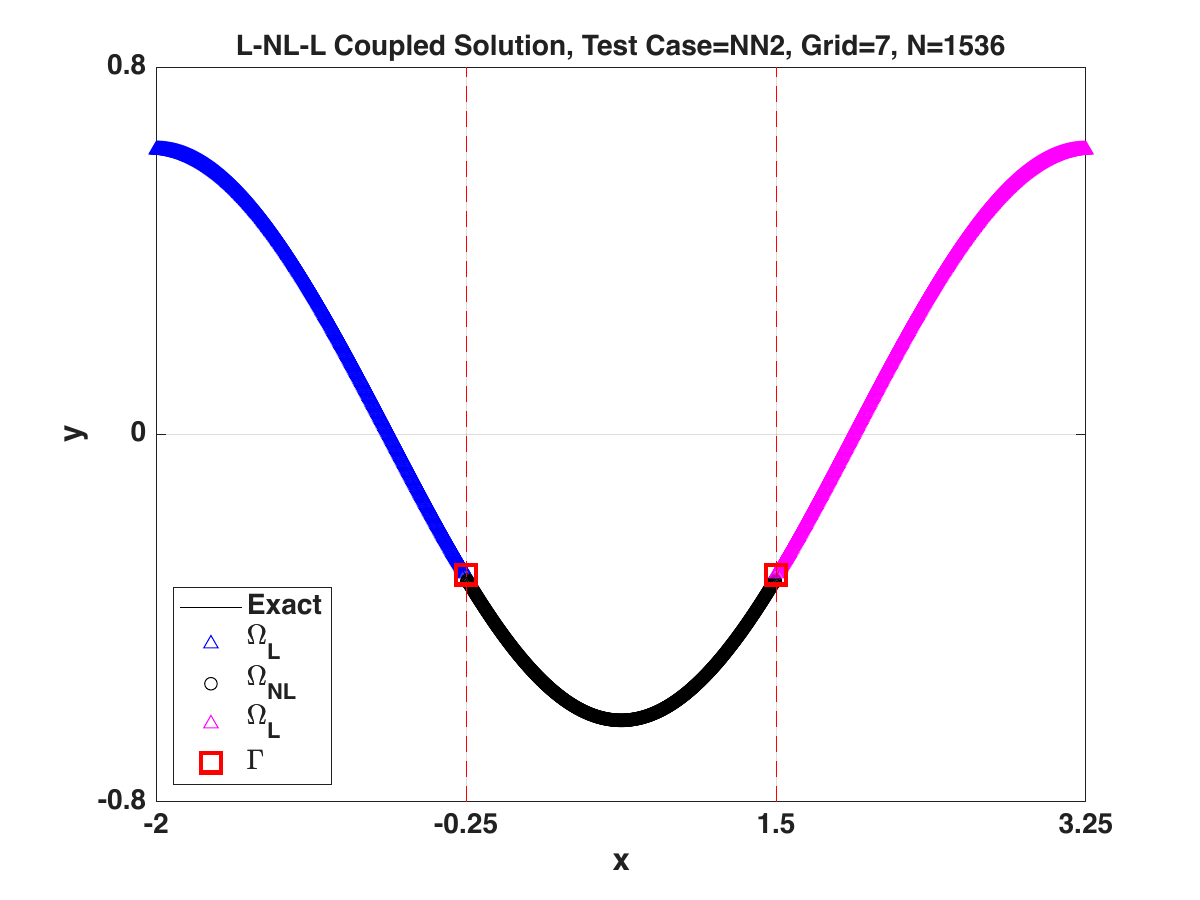}
\end{subfigure}
\caption{L-NL-L coupling with $\BC=\NN$, $\NN1$ (top 2 rows) and $\NN2$ (bottom 2 rows)}
\label{fig:L_NL_L_coupling_NN1_NN2}
\end{figure}


\begin{figure}[tb]
\centering
\begin{subfigure}[b]{0.47\textwidth}
\includegraphics[width=\textwidth]{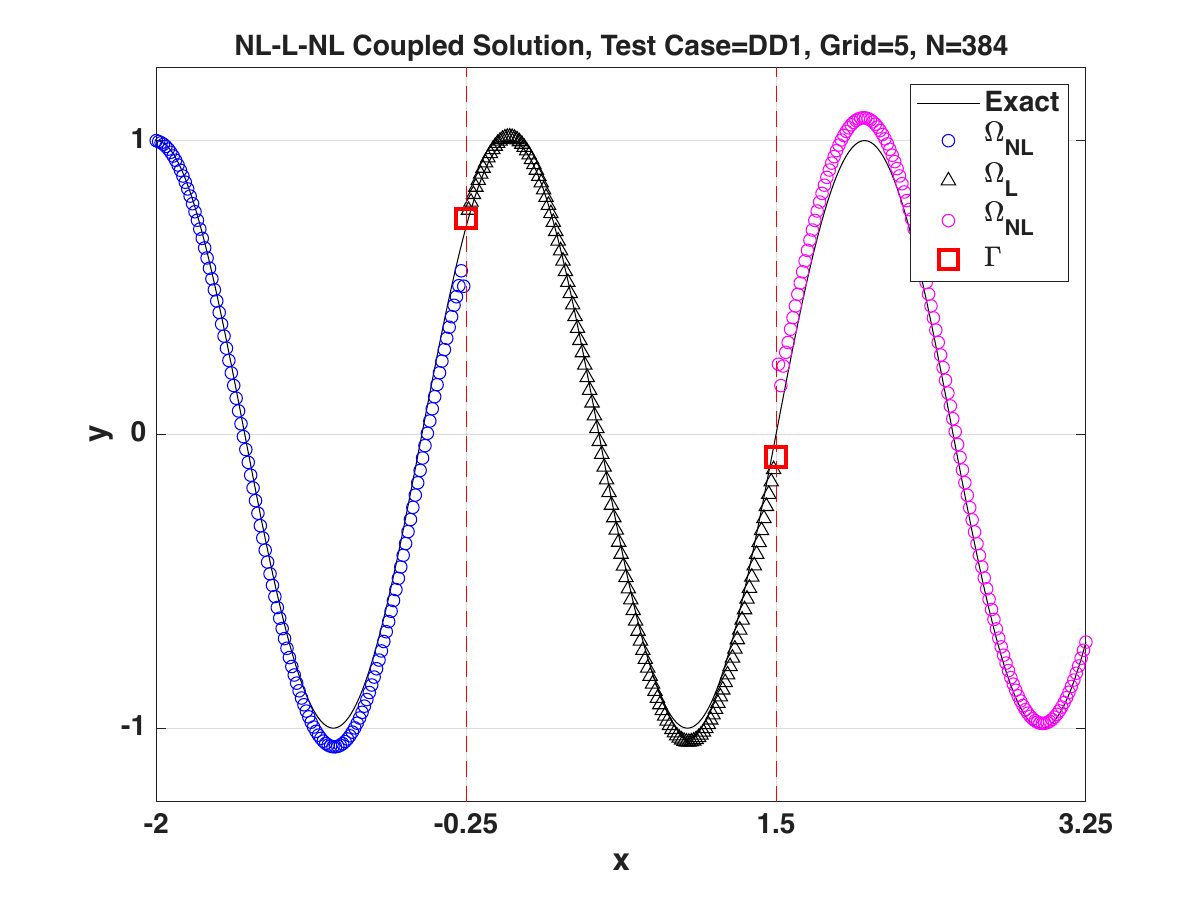}
\end{subfigure}
\begin{subfigure}[b]{0.47\textwidth}
\includegraphics[width=\textwidth]{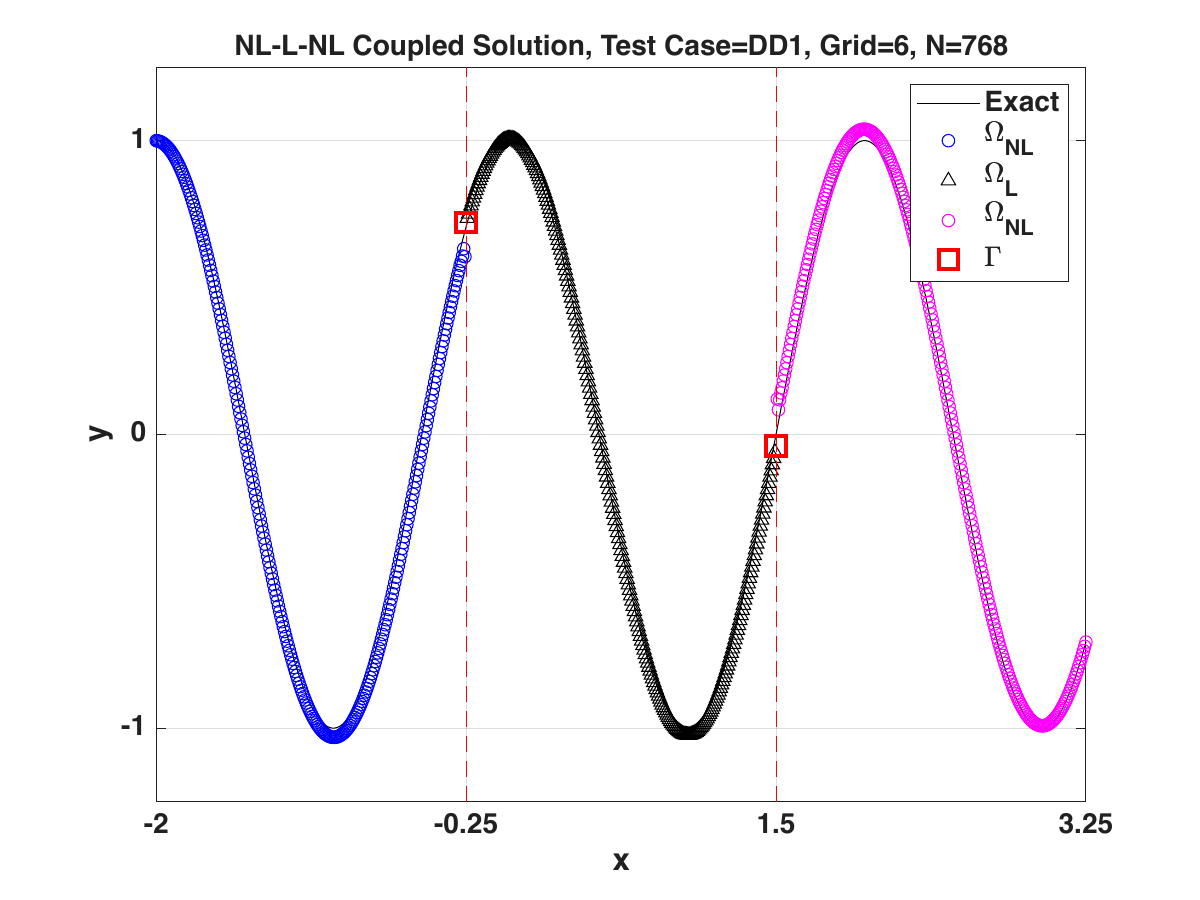}
\end{subfigure}
\begin{subfigure}[b]{0.47\textwidth}
\includegraphics[width=\textwidth]{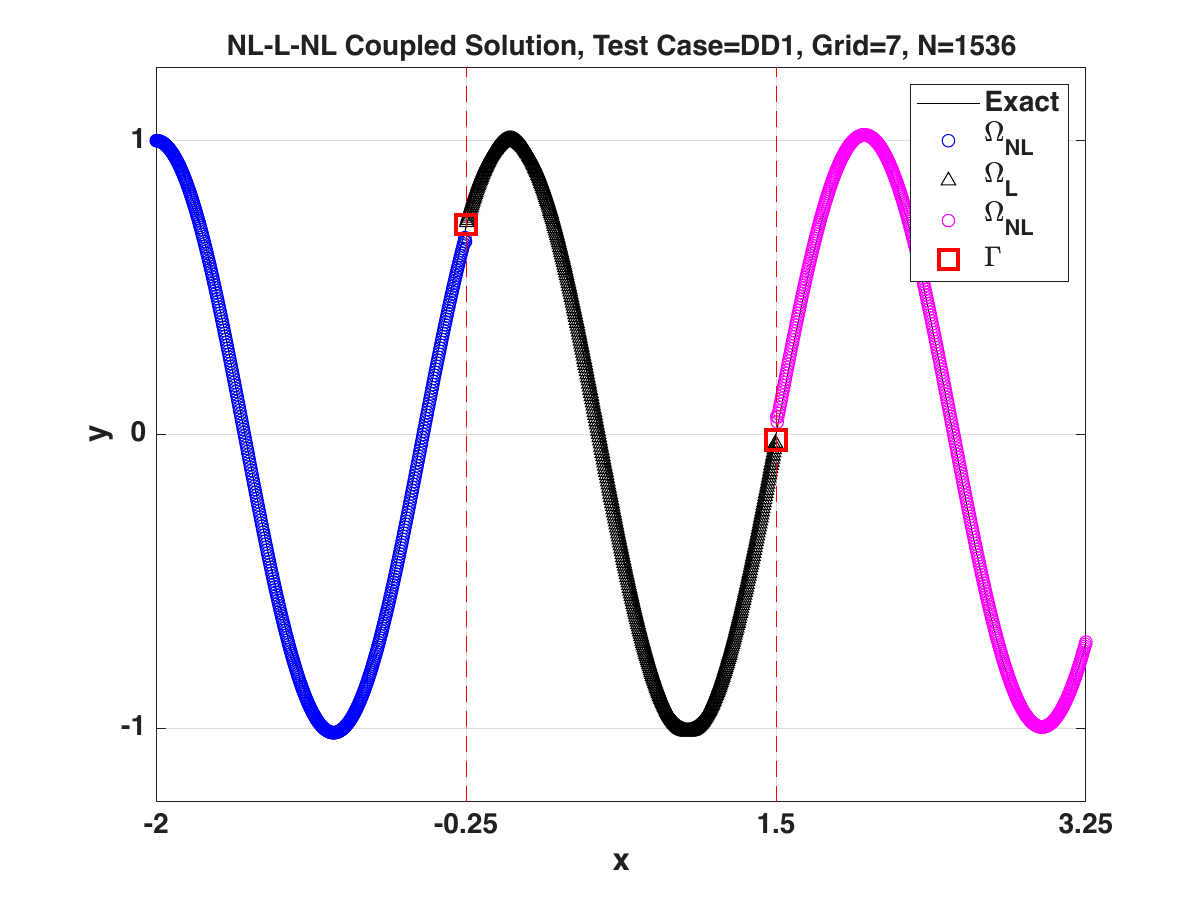}
\end{subfigure}
\begin{subfigure}[b]{0.47\textwidth}
\includegraphics[width=\textwidth]{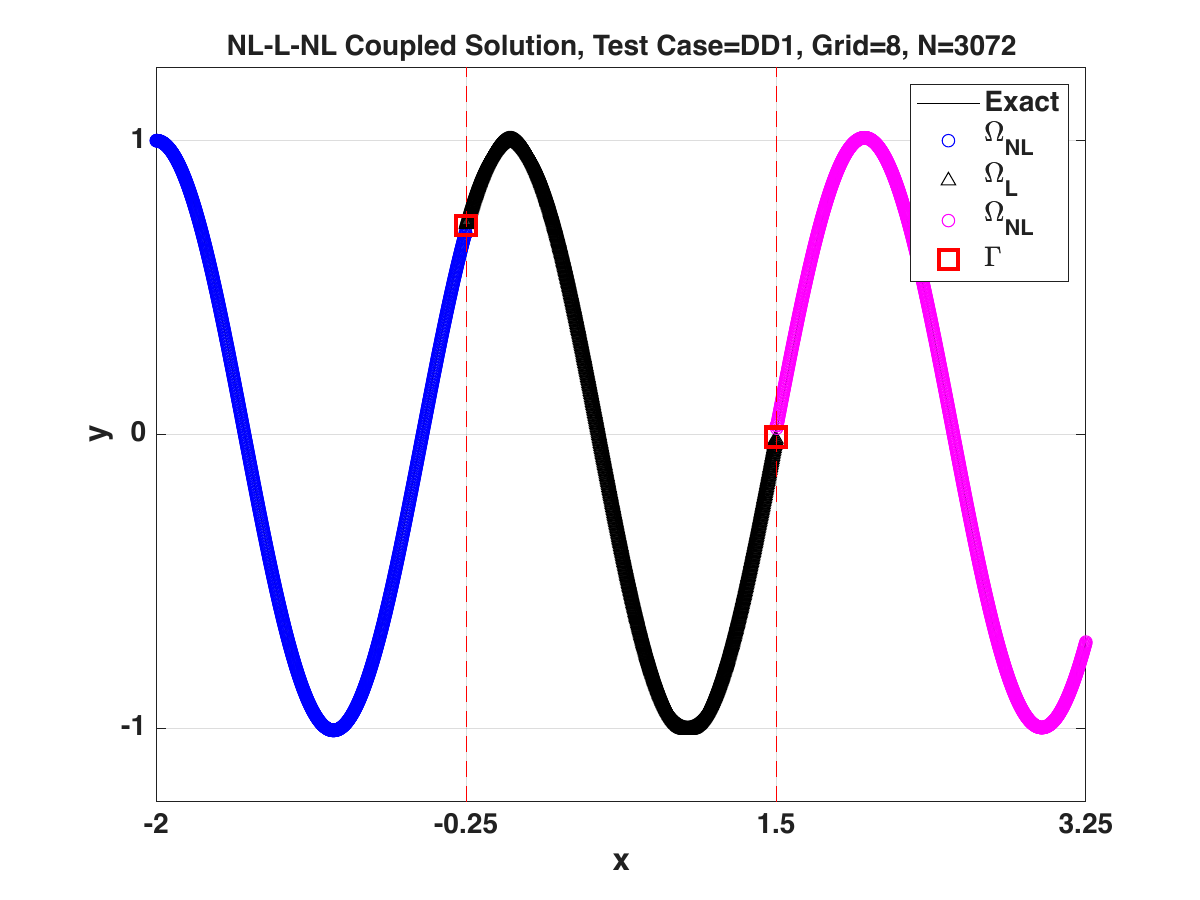}
\end{subfigure}
\begin{subfigure}[b]{0.47\textwidth}
\includegraphics[width=\textwidth]{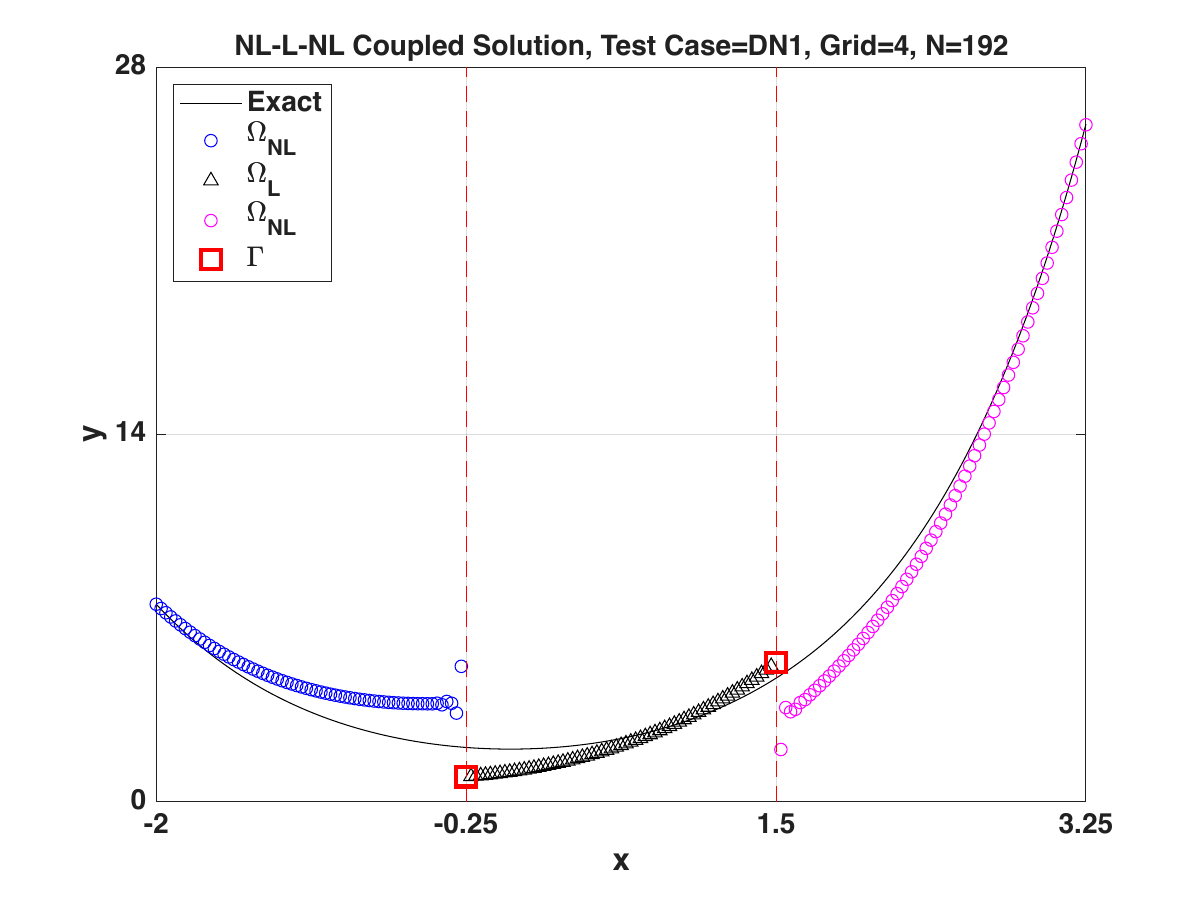}
\end{subfigure}
\begin{subfigure}[b]{0.47\textwidth}
\includegraphics[width=\textwidth]{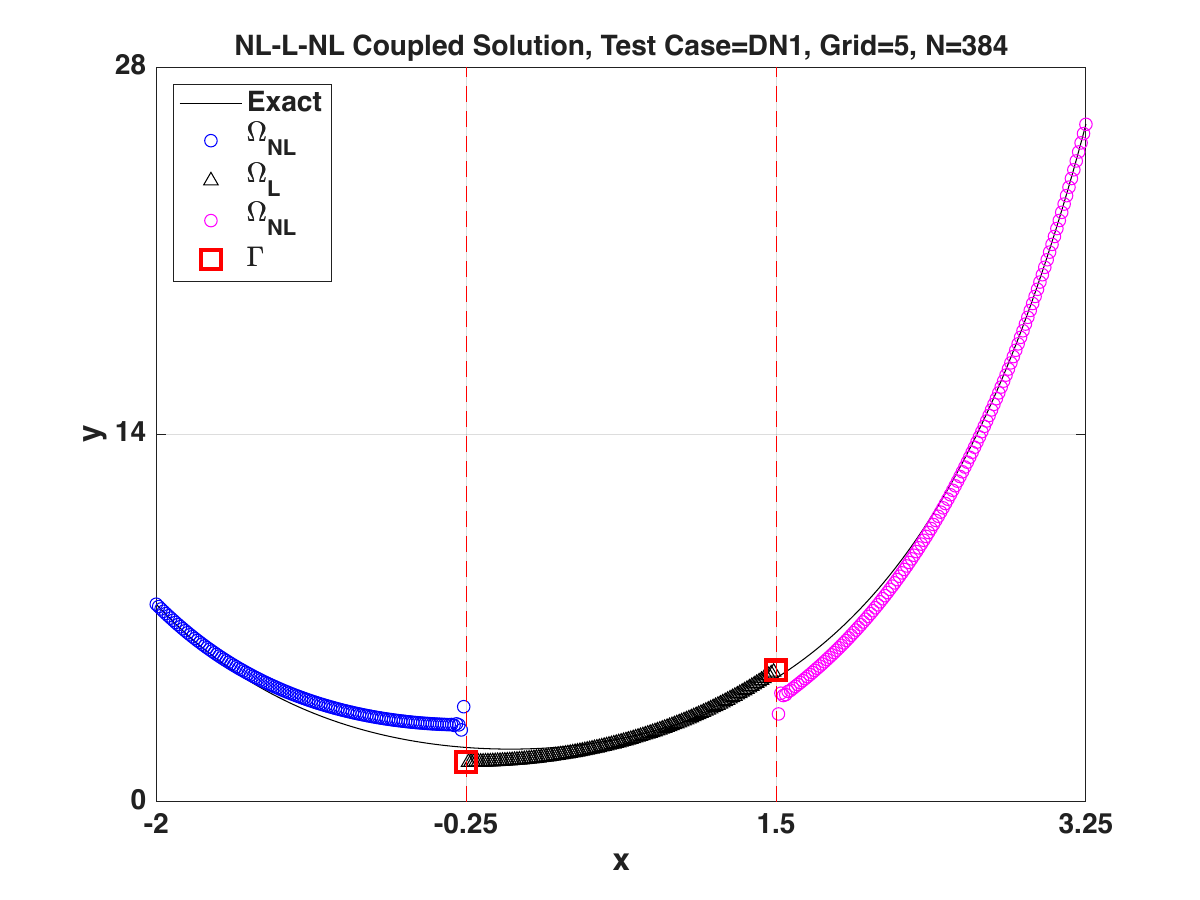}
\end{subfigure}
\begin{subfigure}[b]{0.47\textwidth}
\includegraphics[width=\textwidth]{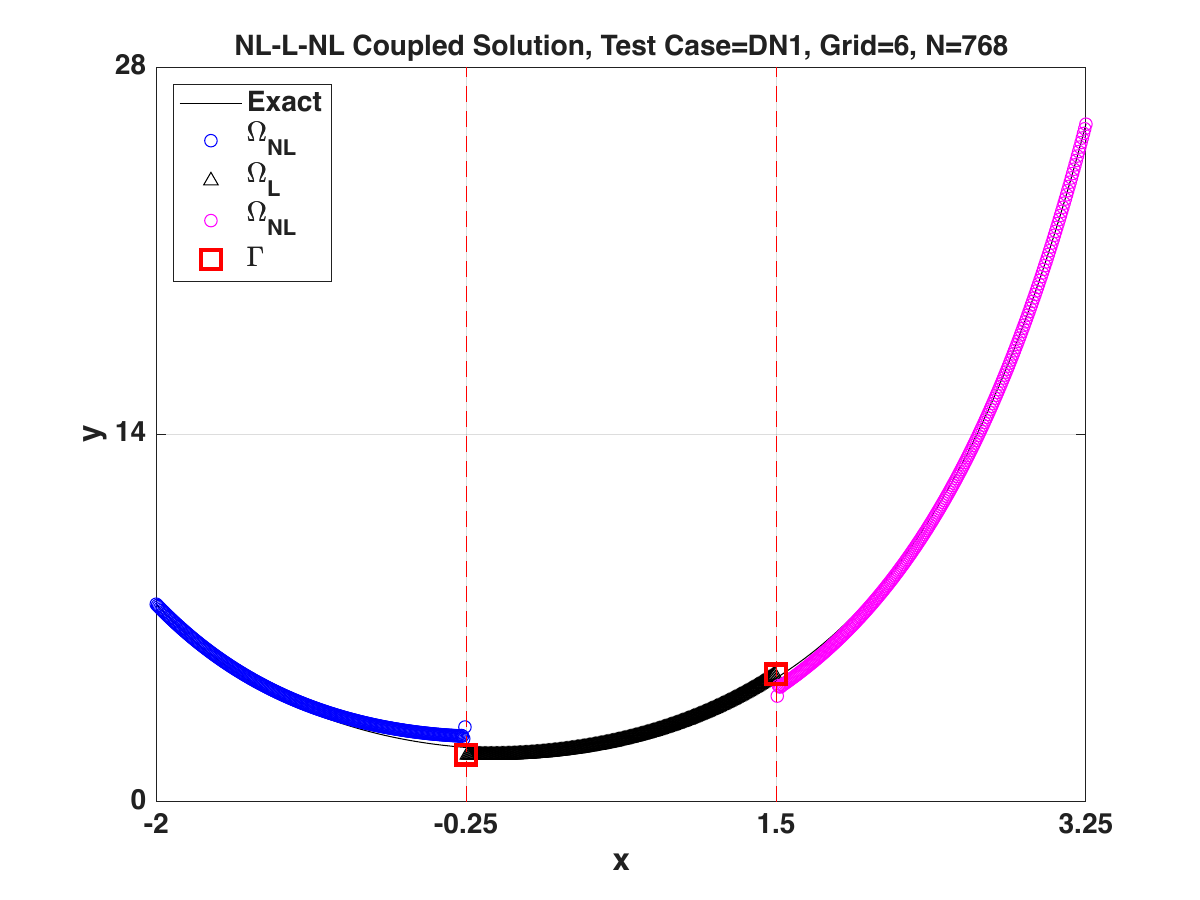}
\end{subfigure}
\begin{subfigure}[b]{0.47\textwidth}
\includegraphics[width=\textwidth]{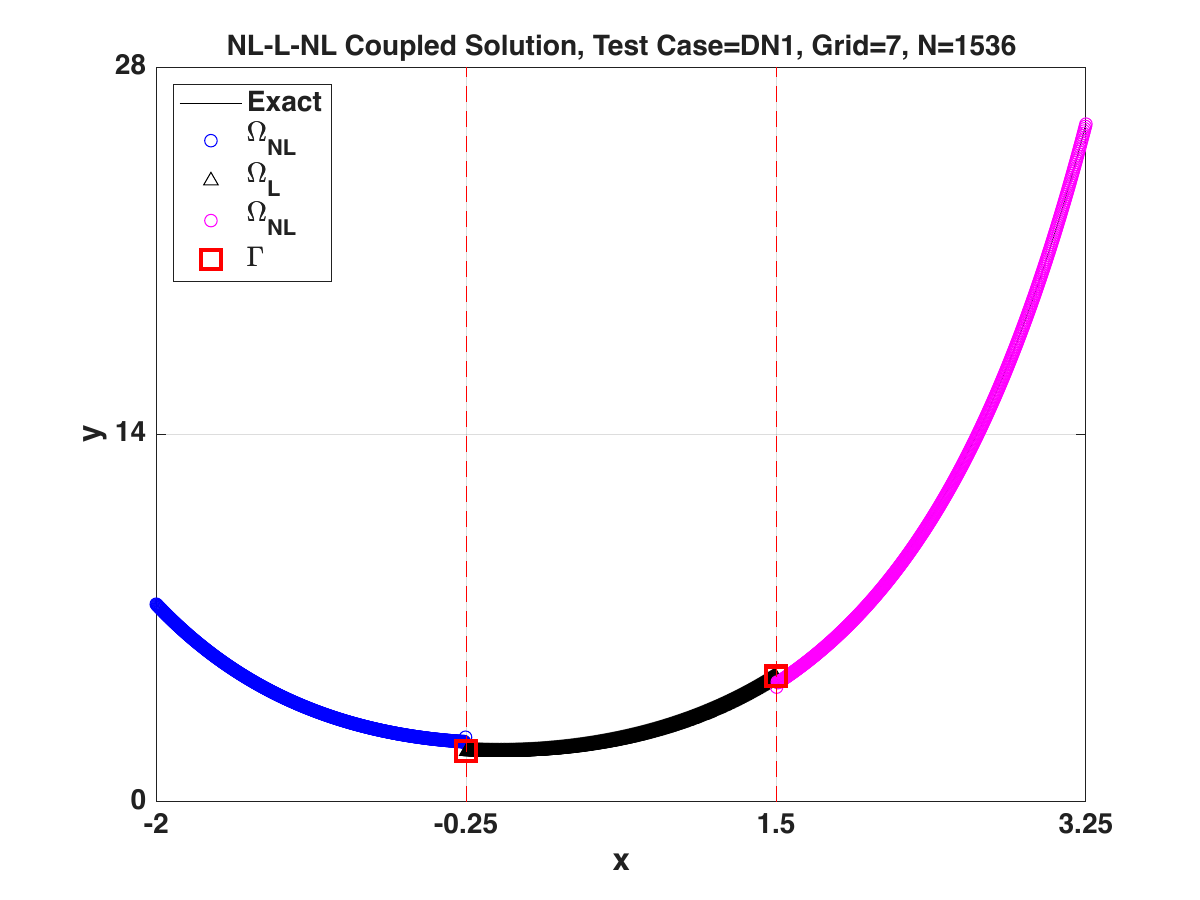}
\end{subfigure}
\caption{NL-L-NL coupling with $\BC=\DD$ (top 2 rows) $\BC=\DN$ (bottom 2 rows)}
\label{fig:NL_L_NL_coupling_DD1_DN1}
\end{figure}

\begin{figure}[tb]
\centering
\begin{subfigure}[b]{0.47\textwidth}
\includegraphics[width=\textwidth]{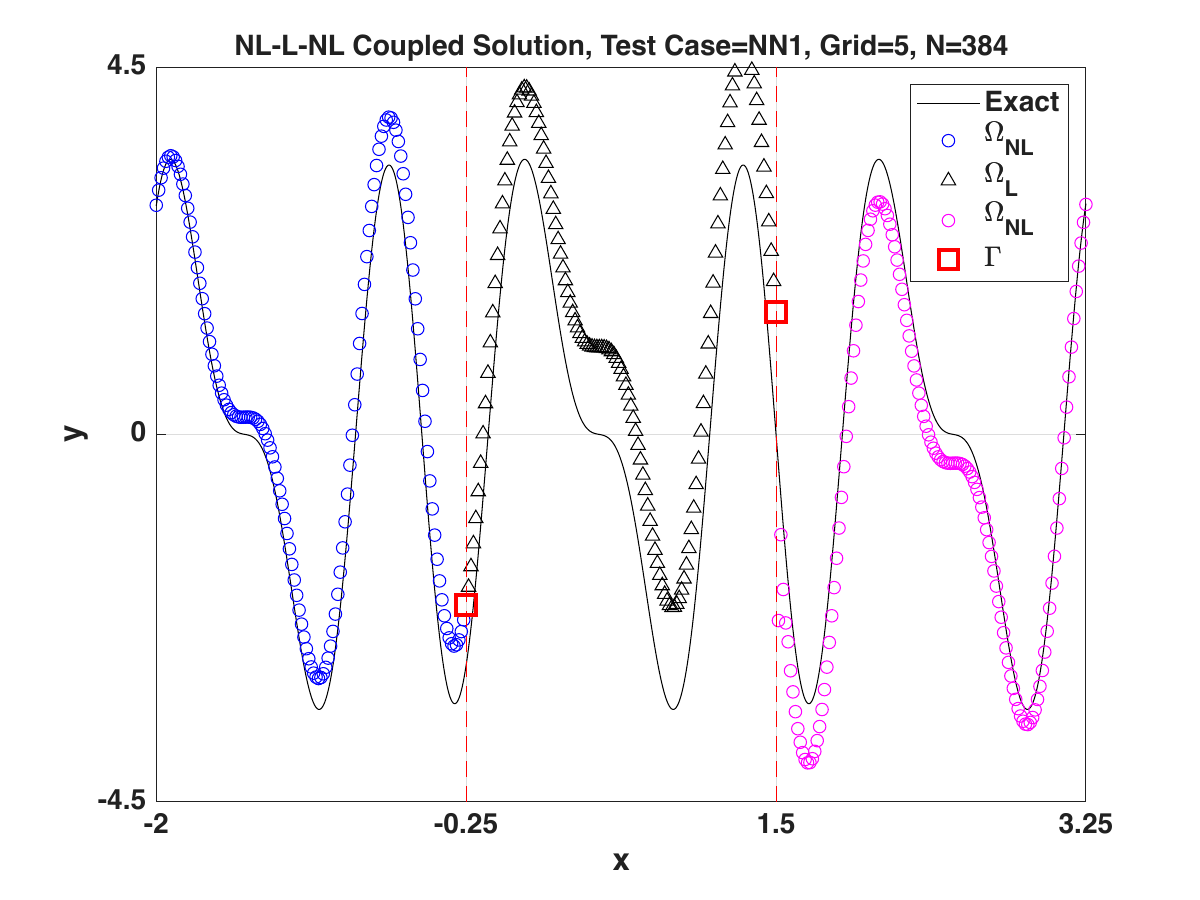}
\end{subfigure}
\begin{subfigure}[b]{0.47\textwidth}
\includegraphics[width=\textwidth]{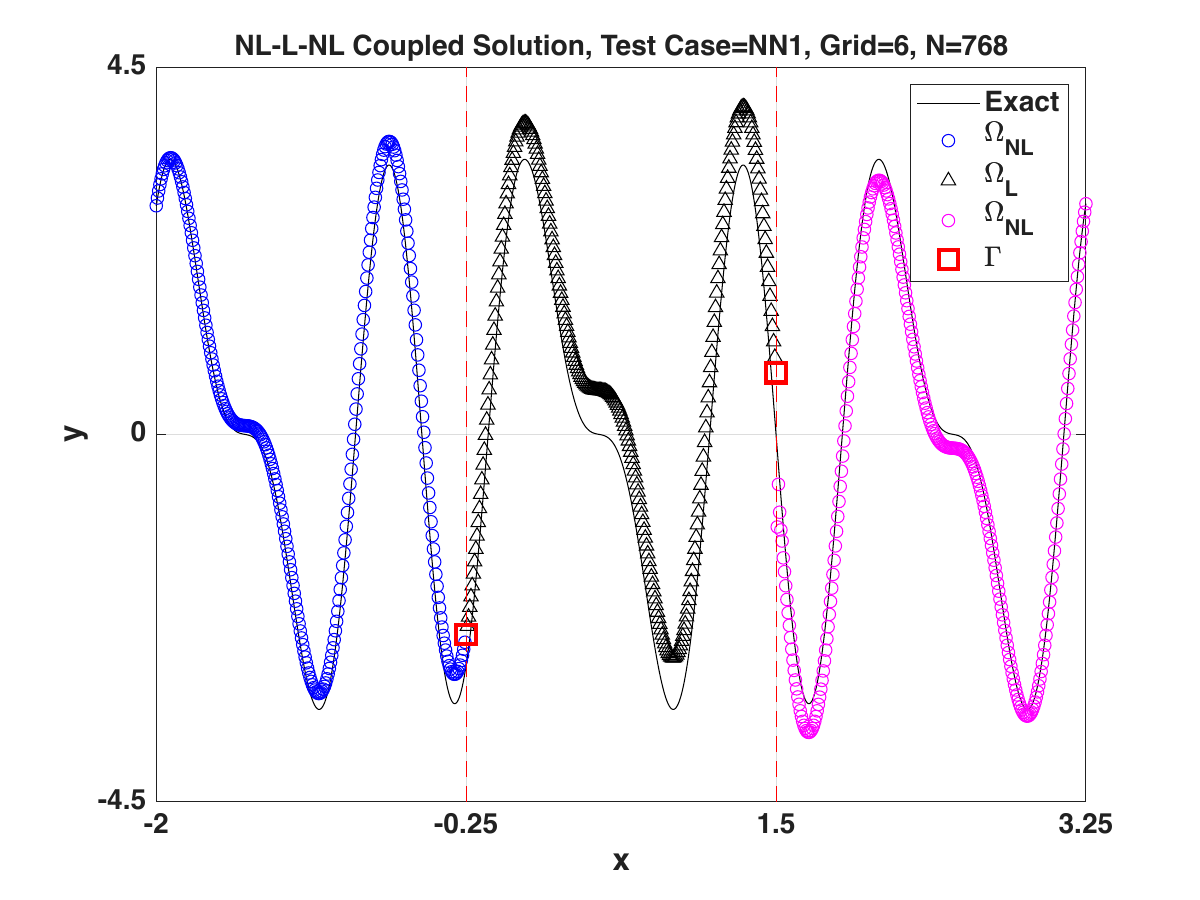}
\end{subfigure}
\begin{subfigure}[b]{0.47\textwidth}
\includegraphics[width=\textwidth]{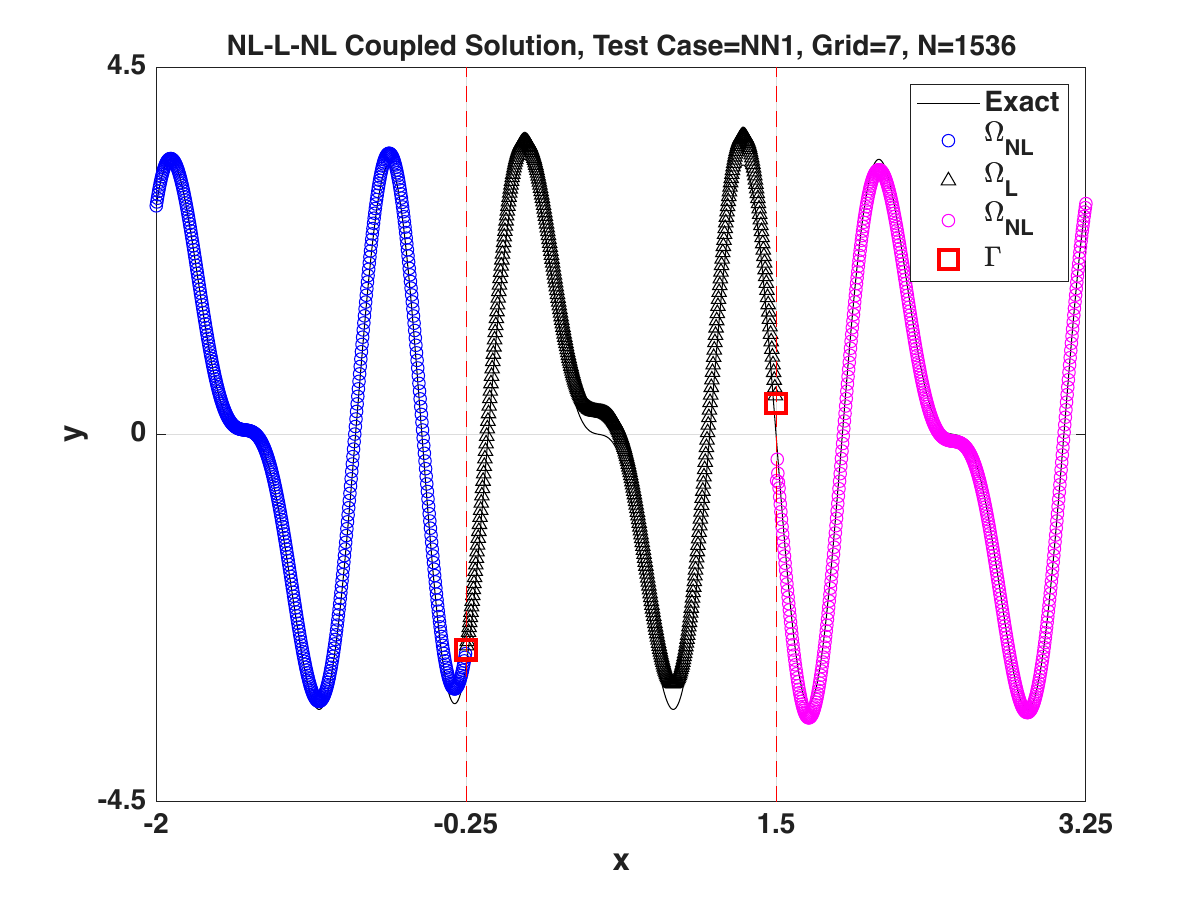}
\end{subfigure}
\begin{subfigure}[b]{0.47\textwidth}
\includegraphics[width=\textwidth]{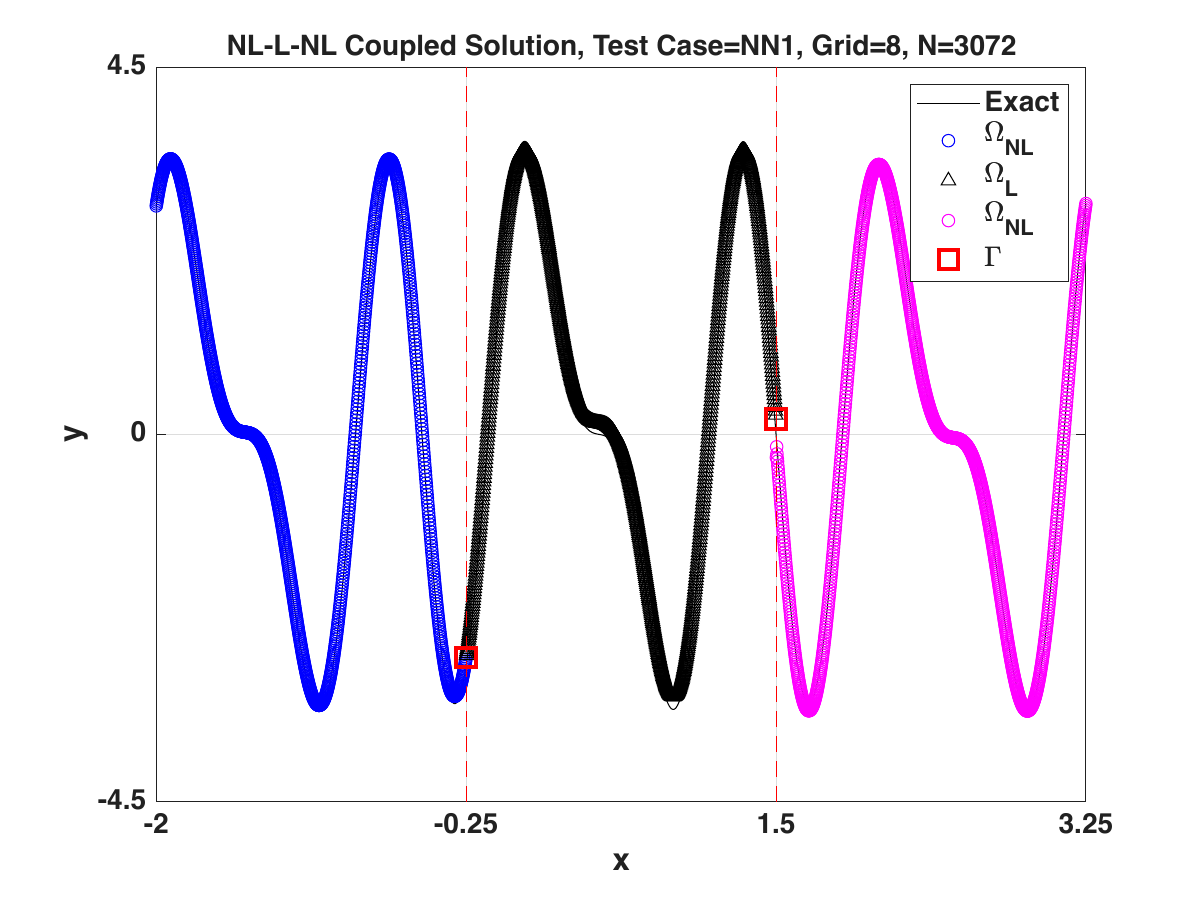}
\end{subfigure}
\begin{subfigure}[b]{0.47\textwidth}
\includegraphics[width=\textwidth]{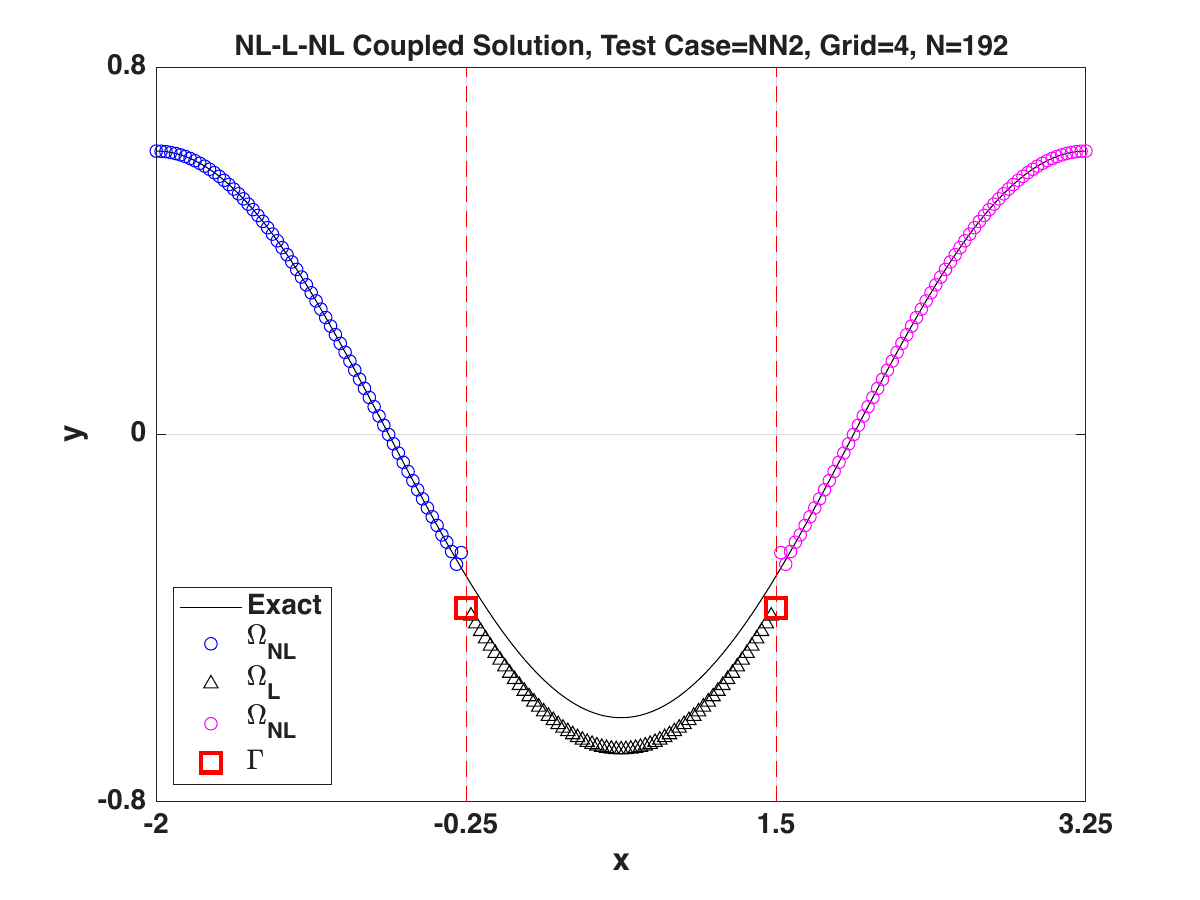}
\end{subfigure}
\begin{subfigure}[b]{0.47\textwidth}
\includegraphics[width=\textwidth]{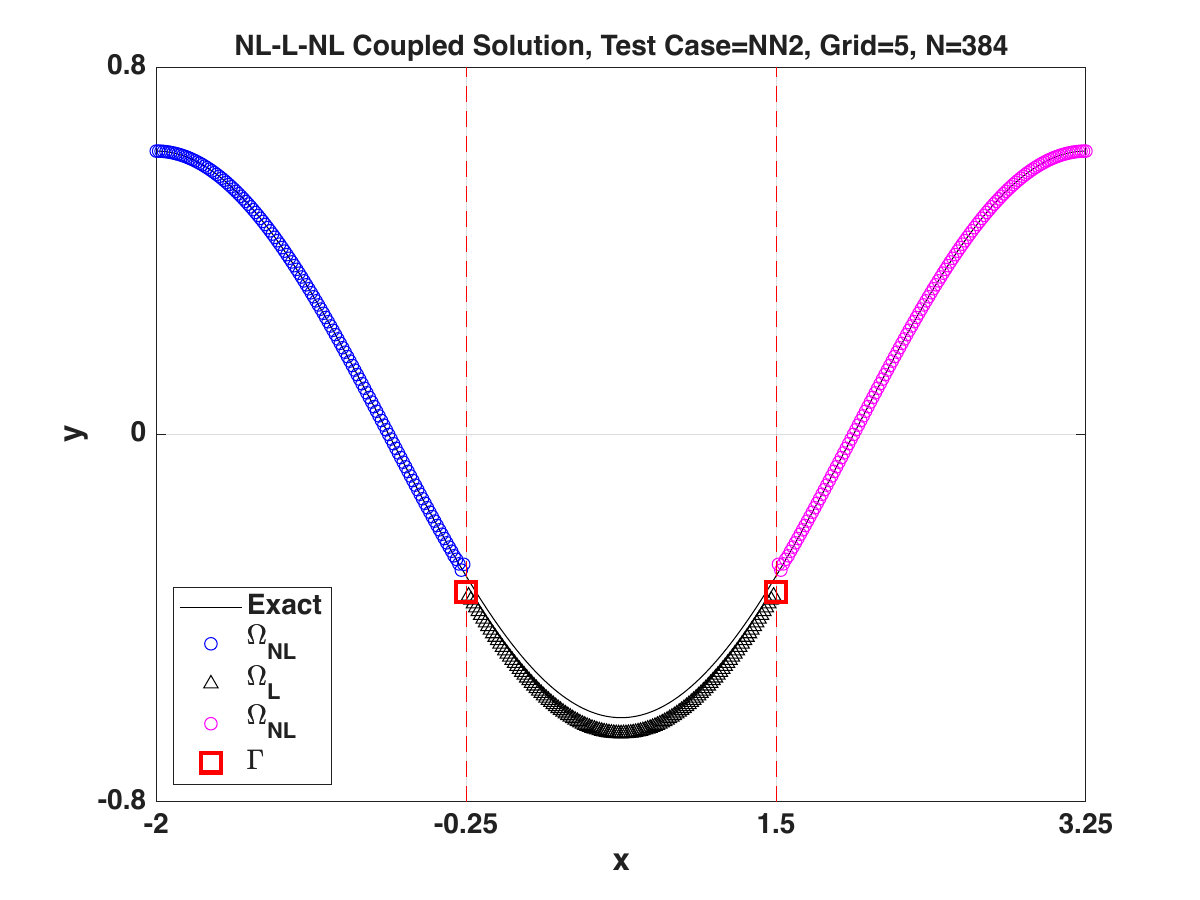}
\end{subfigure}
\begin{subfigure}[b]{0.47\textwidth}
\includegraphics[width=\textwidth]{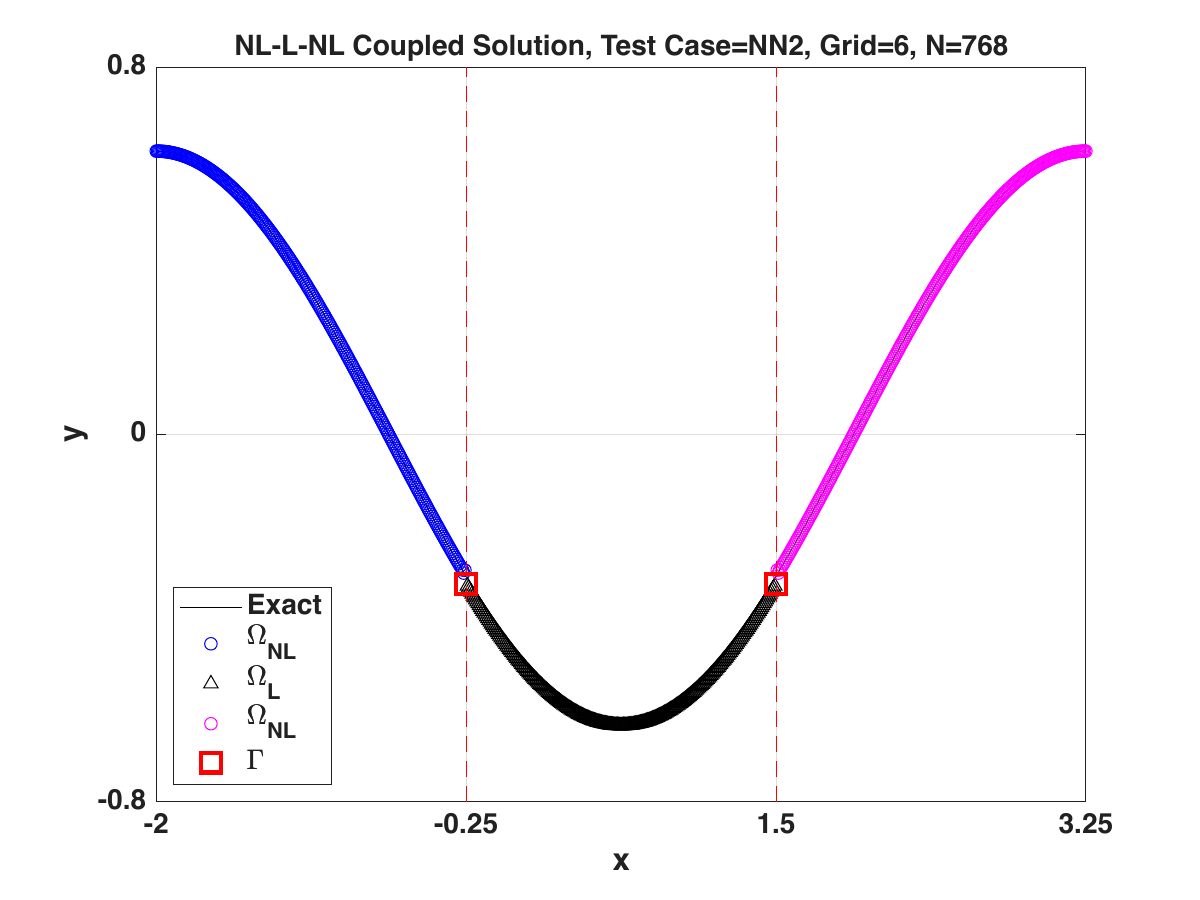}
\end{subfigure}
\begin{subfigure}[b]{0.47\textwidth}
\includegraphics[width=\textwidth]{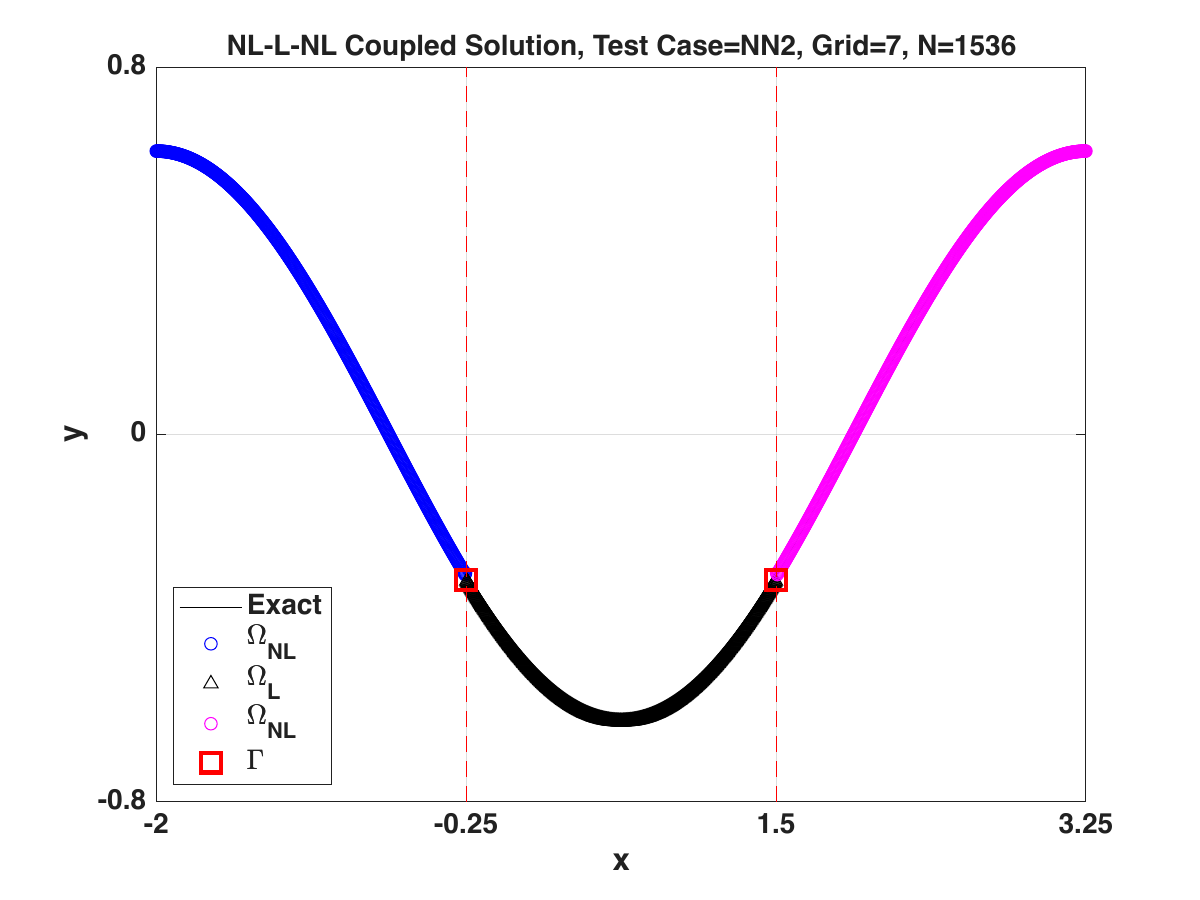}
\end{subfigure}
\caption{NL-L-NL coupling with $\BC=\NN$, $\NN1$ (top 2 rows) and $\NN2$ (bottom 2 rows)}
\label{fig:NL-L-NL_coupling_NN1_NN2}
\end{figure}


\begin{figure}[tb]
\centering
\begin{subfigure}[b]{0.47\textwidth}
\includegraphics[width=\textwidth]{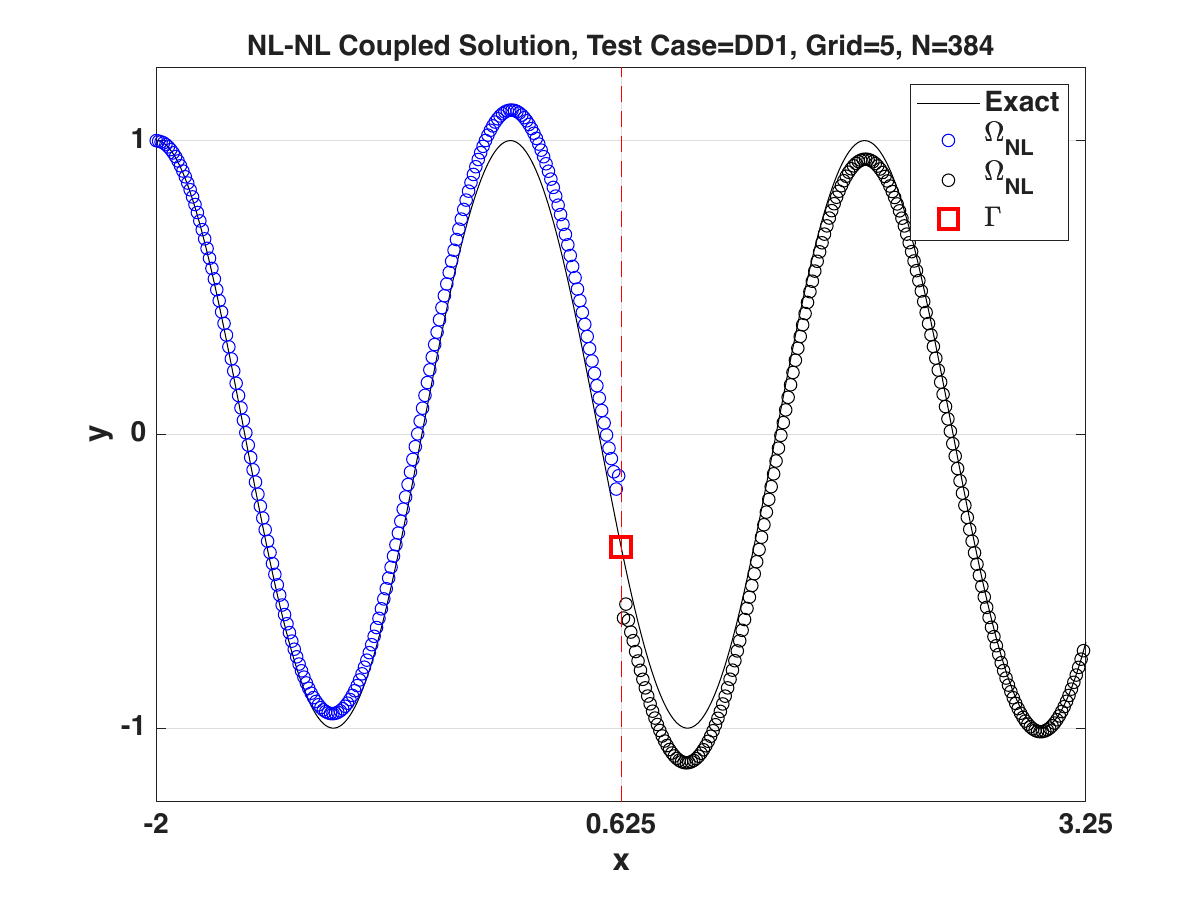}
\end{subfigure}
\begin{subfigure}[b]{0.47\textwidth}
\includegraphics[width=\textwidth]{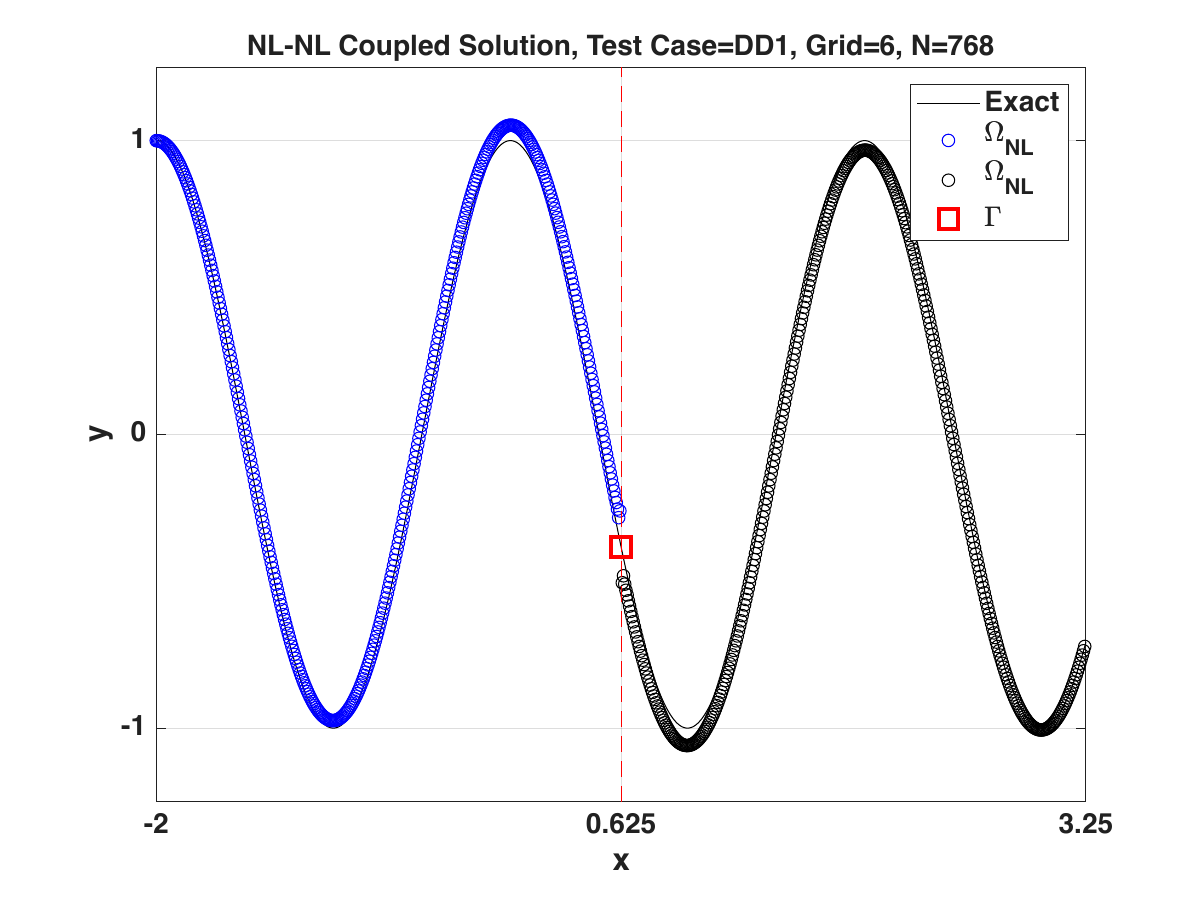}
\end{subfigure}
\begin{subfigure}[b]{0.47\textwidth}
\includegraphics[width=\textwidth]{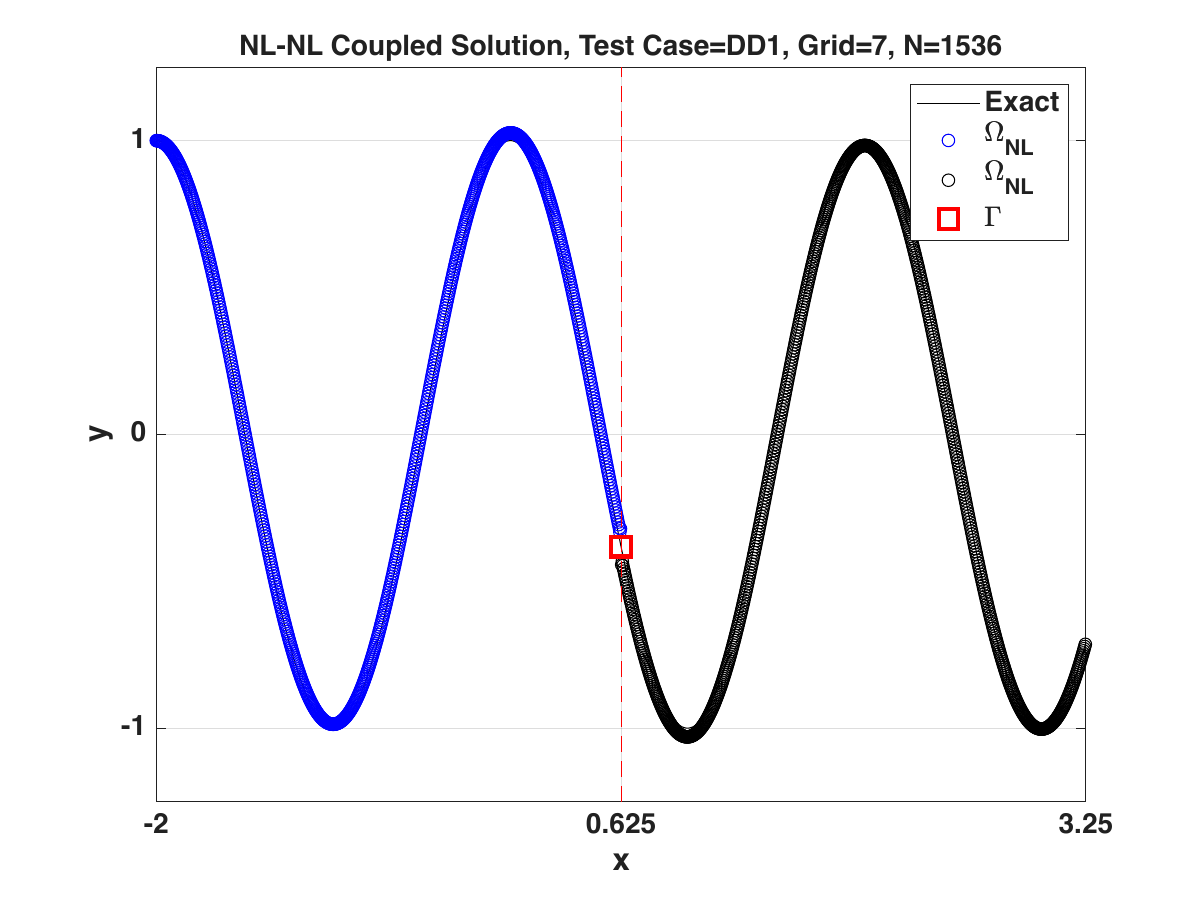}
\end{subfigure}
\begin{subfigure}[b]{0.47\textwidth}
\includegraphics[width=\textwidth]{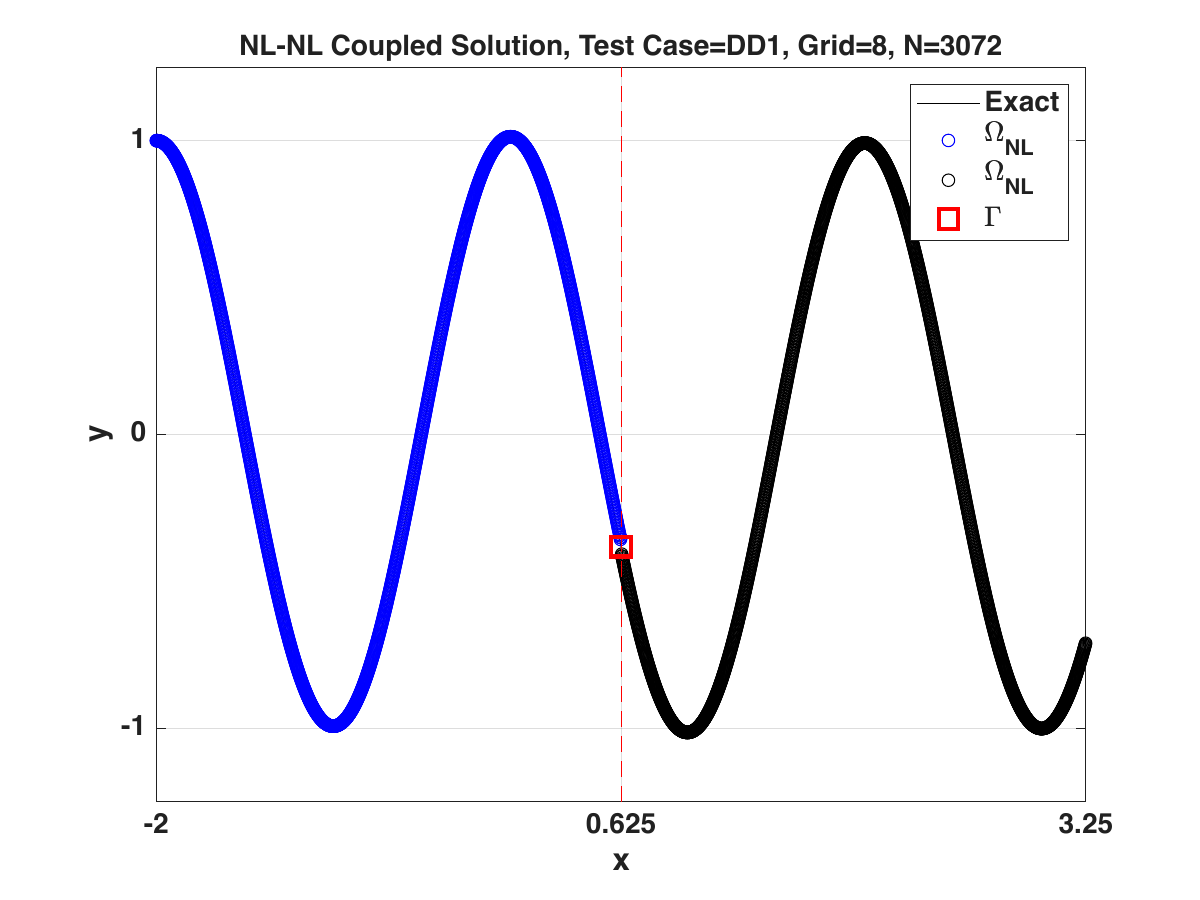}
\end{subfigure}
\caption{NL-NL coupling with $\BC=\DD$}
\begin{subfigure}[b]{0.47\textwidth}
\includegraphics[width=\textwidth]{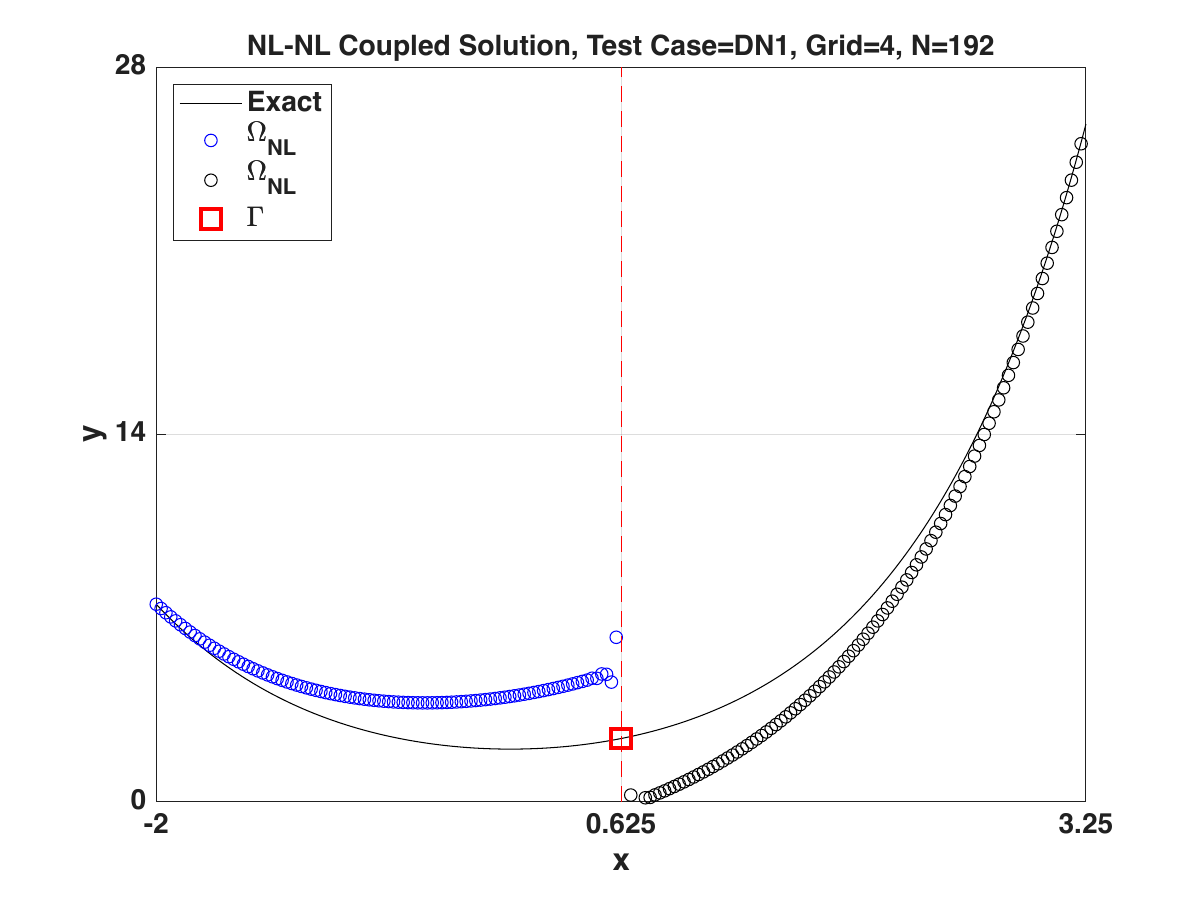}
\end{subfigure}
\begin{subfigure}[b]{0.47\textwidth}
\includegraphics[width=\textwidth]{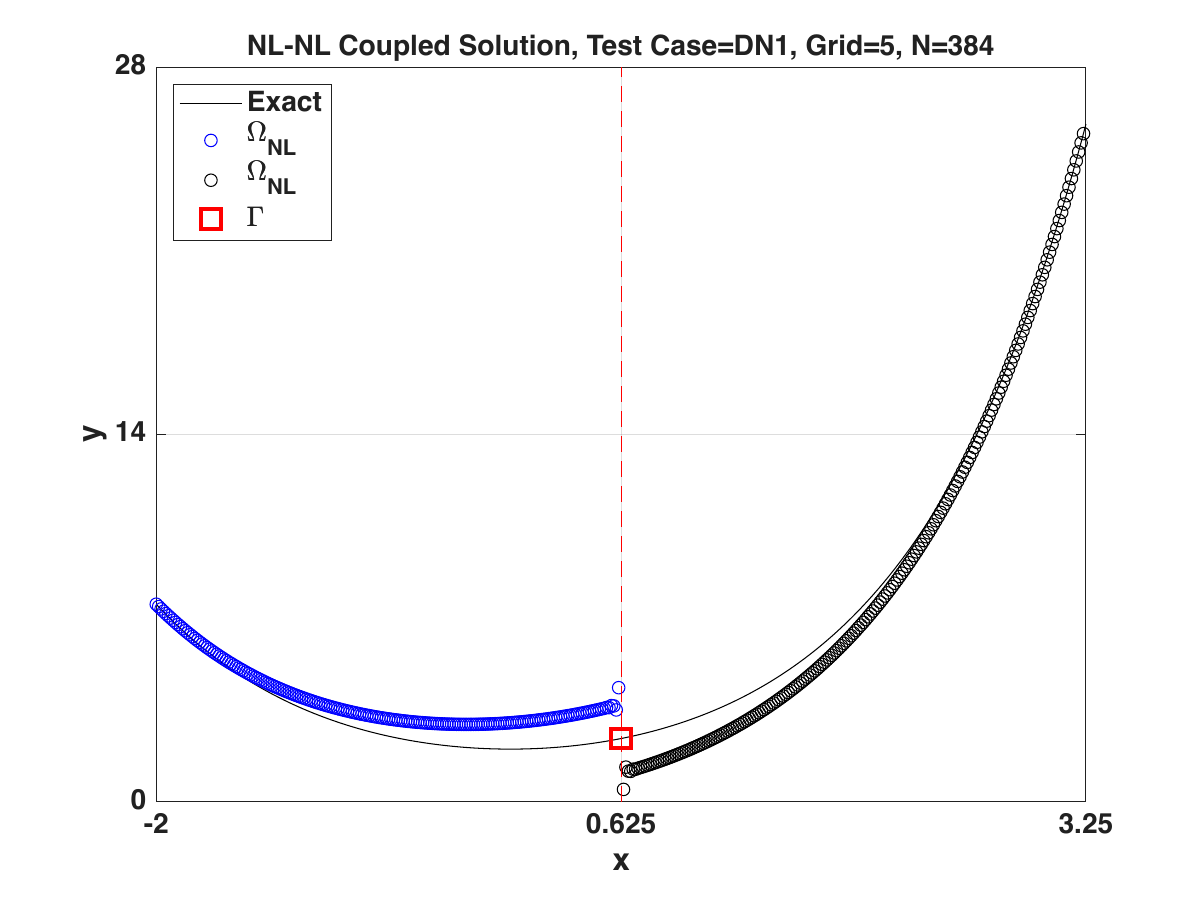}
\end{subfigure}
\begin{subfigure}[b]{0.47\textwidth}
\includegraphics[width=\textwidth]{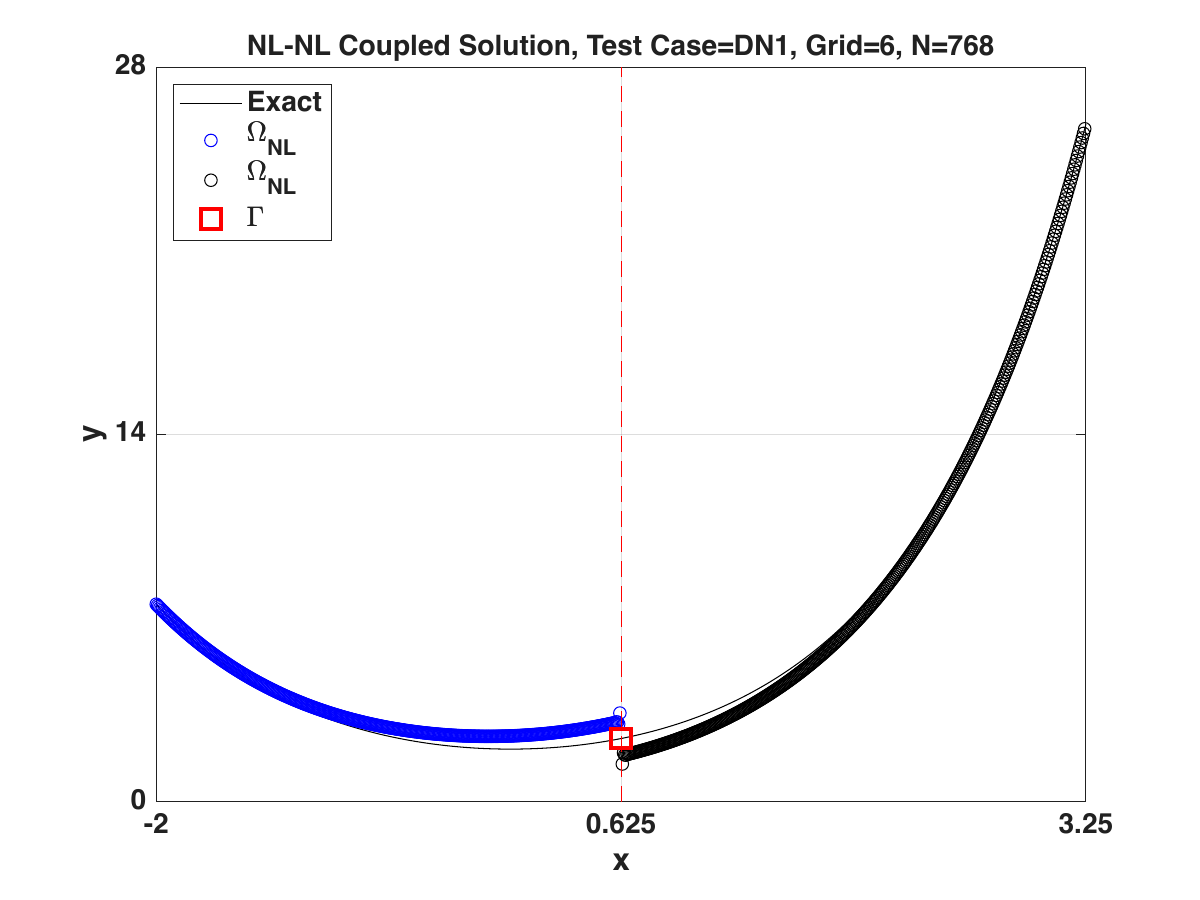}
\end{subfigure}
\begin{subfigure}[b]{0.47\textwidth}
\includegraphics[width=\textwidth]{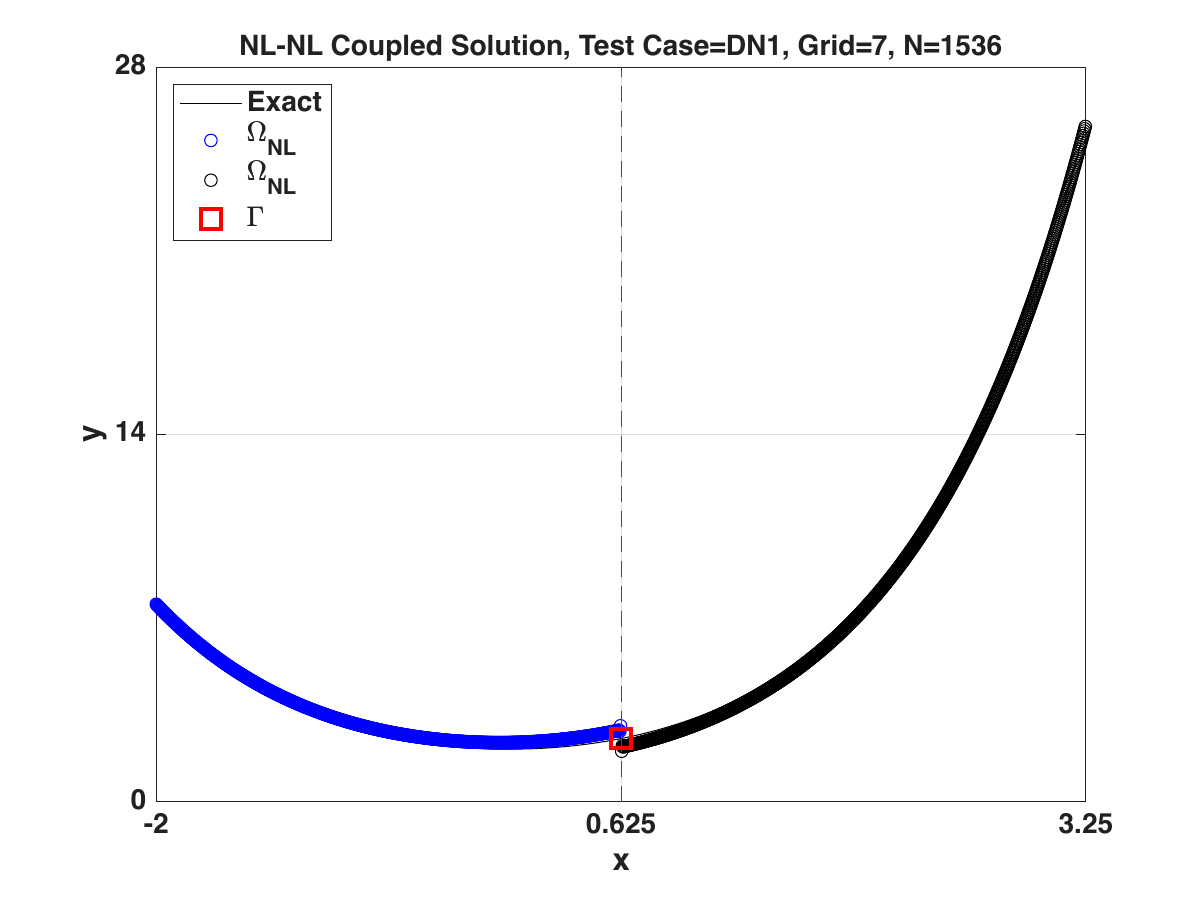}
\end{subfigure}
\caption{NL-NL coupling with $\BC=\DD$ (top 2 rows) $\BC=\DN$ (bottom 2 rows)}
\label{fig:NL_NL_coupling_DD1_DN1}
\end{figure}

\begin{figure}[tb]
\centering
\begin{subfigure}[b]{0.47\textwidth}
\includegraphics[width=\textwidth]{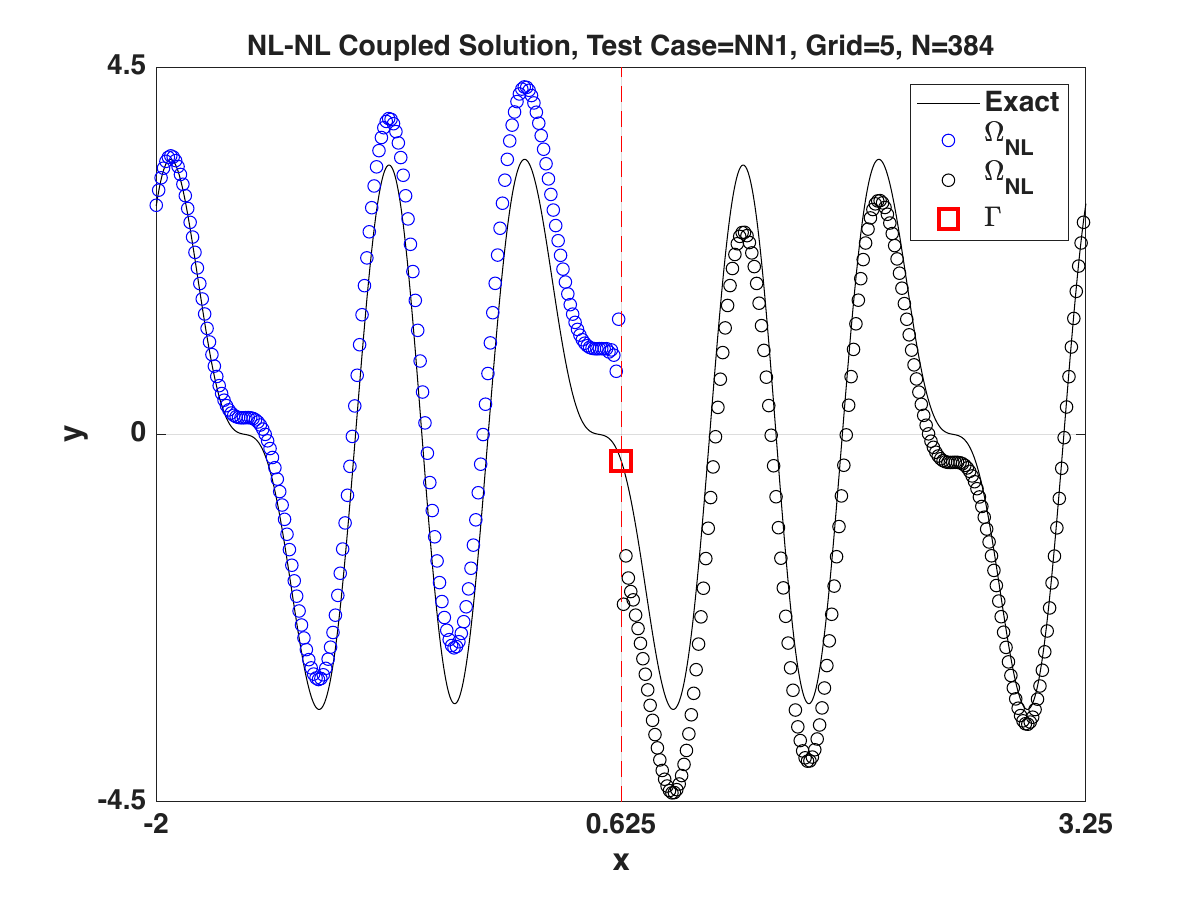}
\end{subfigure}
\begin{subfigure}[b]{0.47\textwidth}
\includegraphics[width=\textwidth]{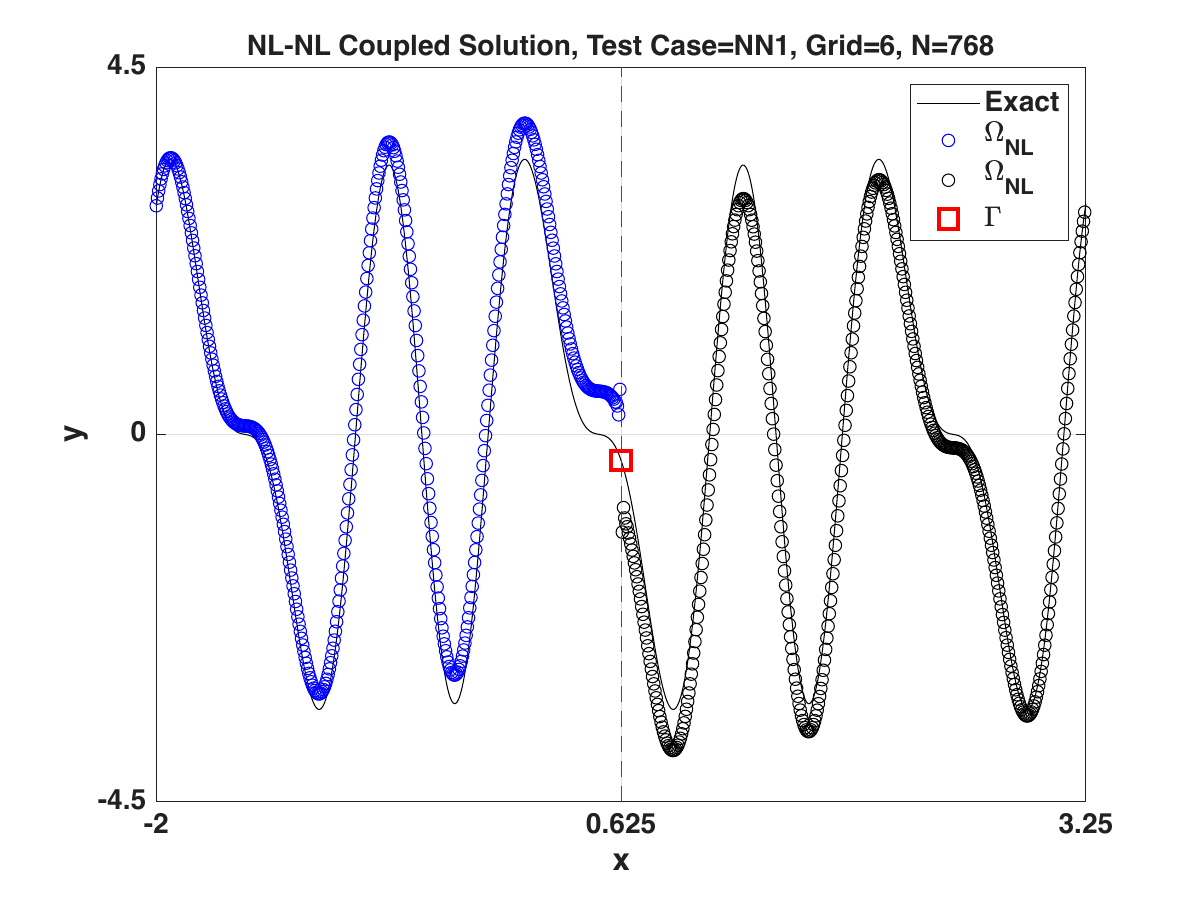}
\end{subfigure}
\begin{subfigure}[b]{0.47\textwidth}
\includegraphics[width=\textwidth]{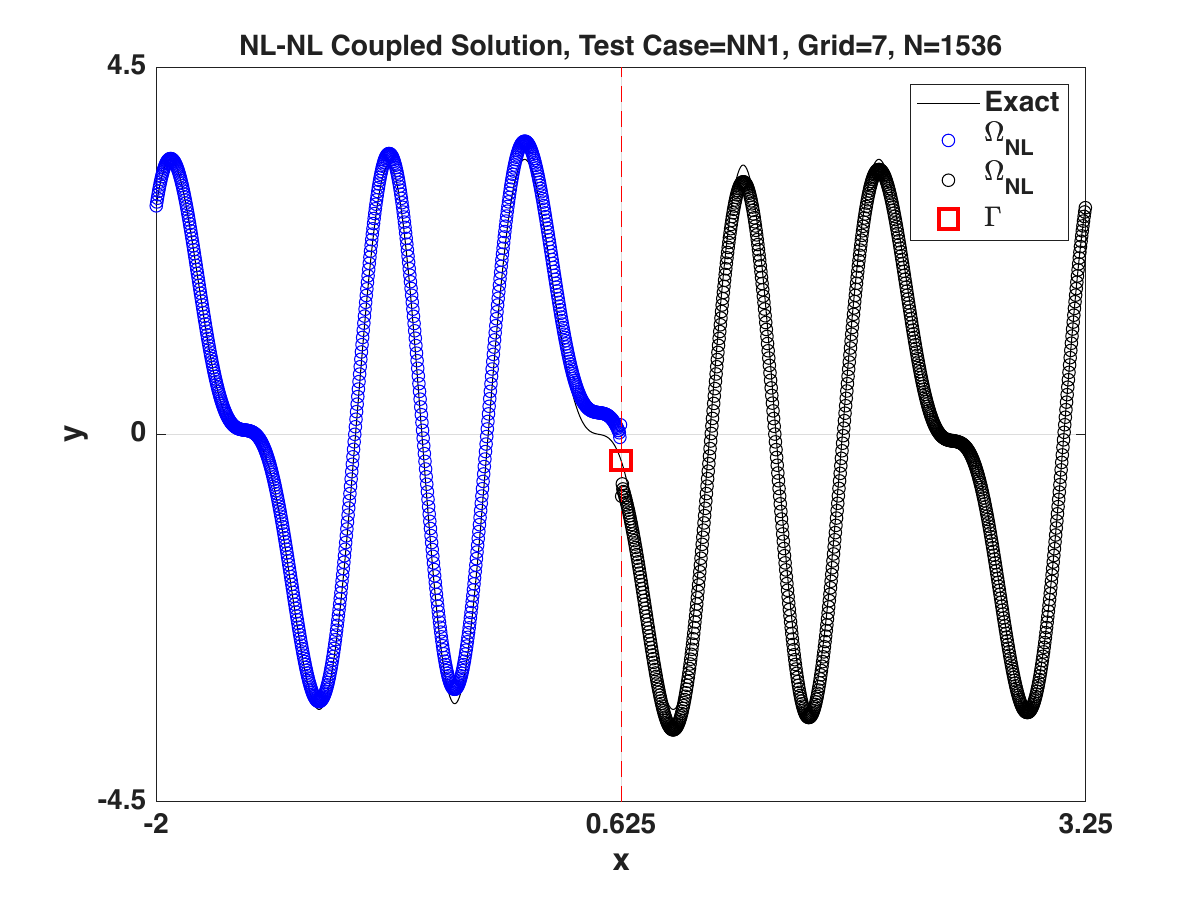}
\end{subfigure}
\begin{subfigure}[b]{0.47\textwidth}
\includegraphics[width=\textwidth]{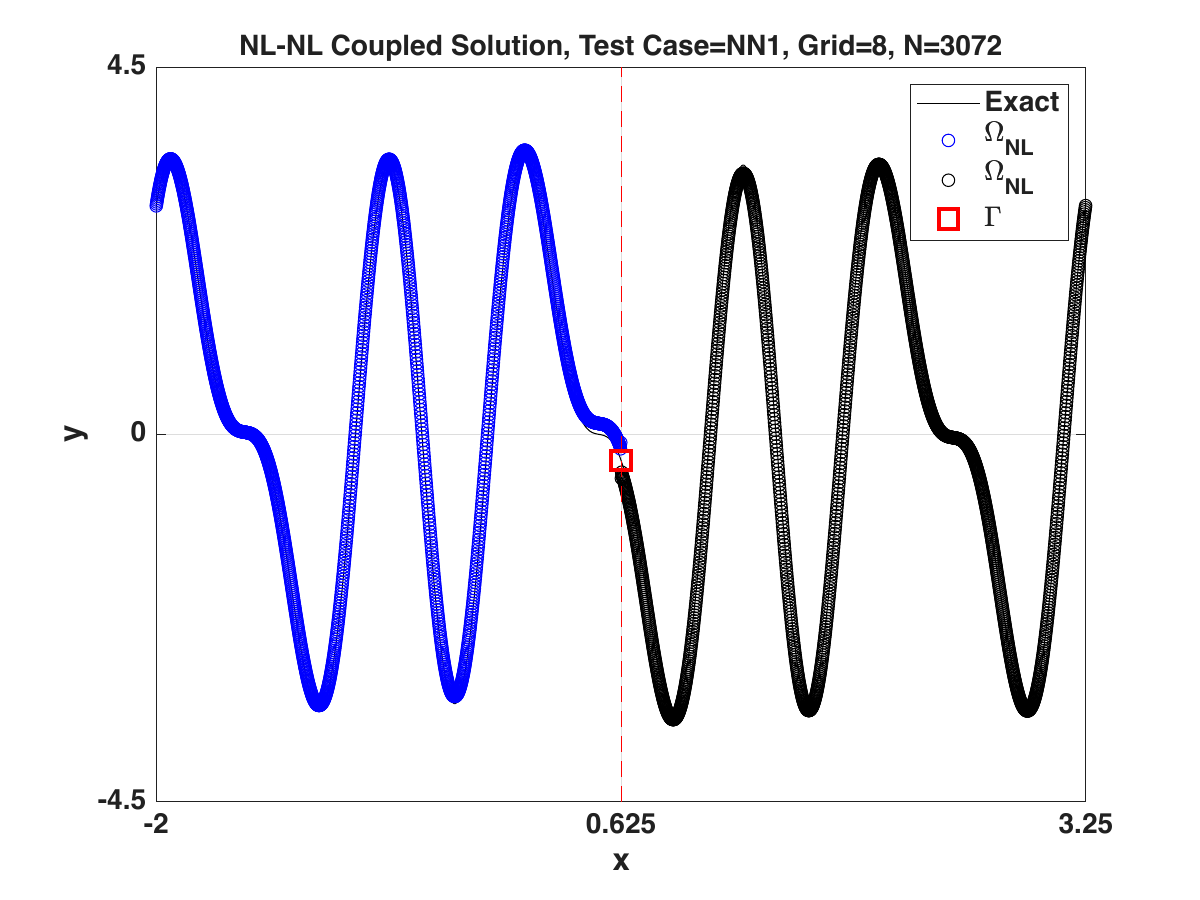}
\end{subfigure}
\centering
\begin{subfigure}[b]{0.47\textwidth}
\includegraphics[width=\textwidth]{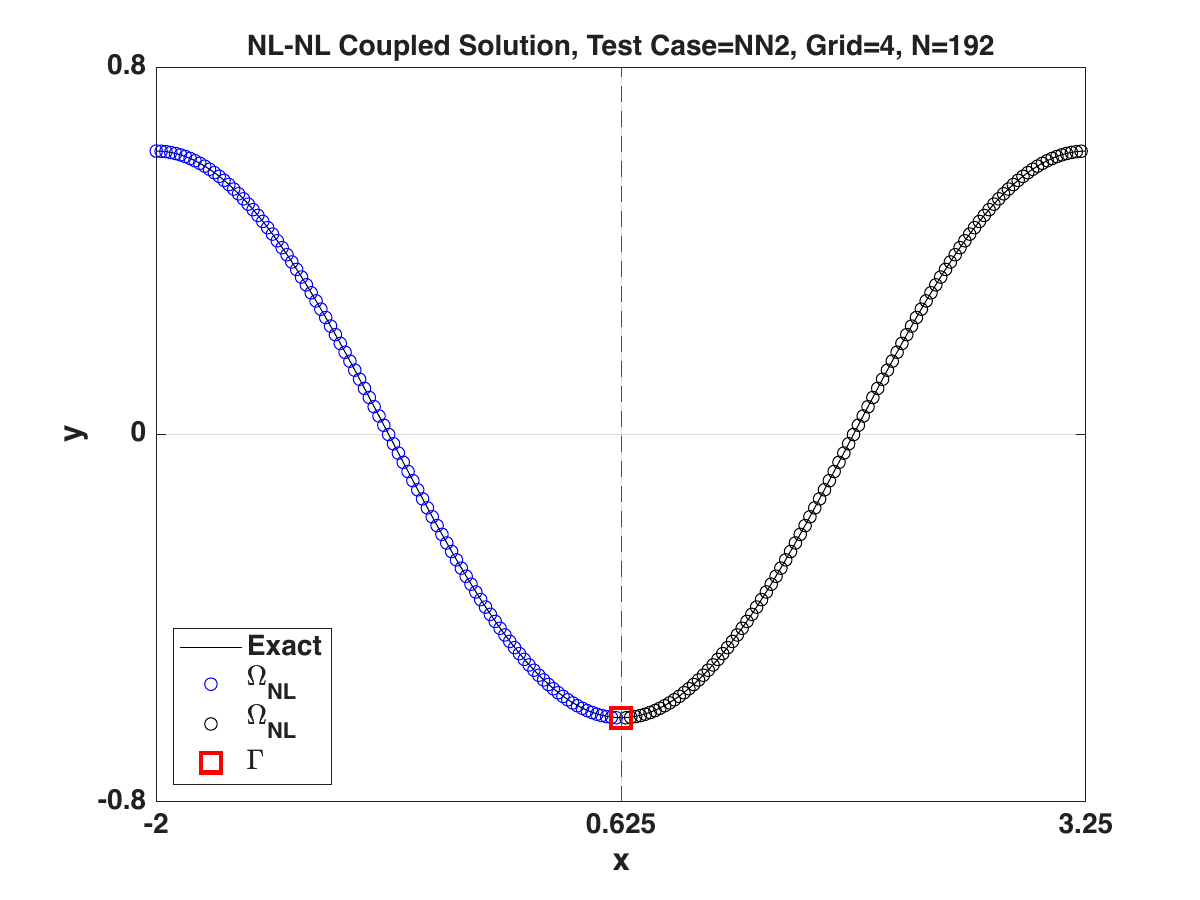}
\end{subfigure}
\begin{subfigure}[b]{0.47\textwidth}
\includegraphics[width=\textwidth]{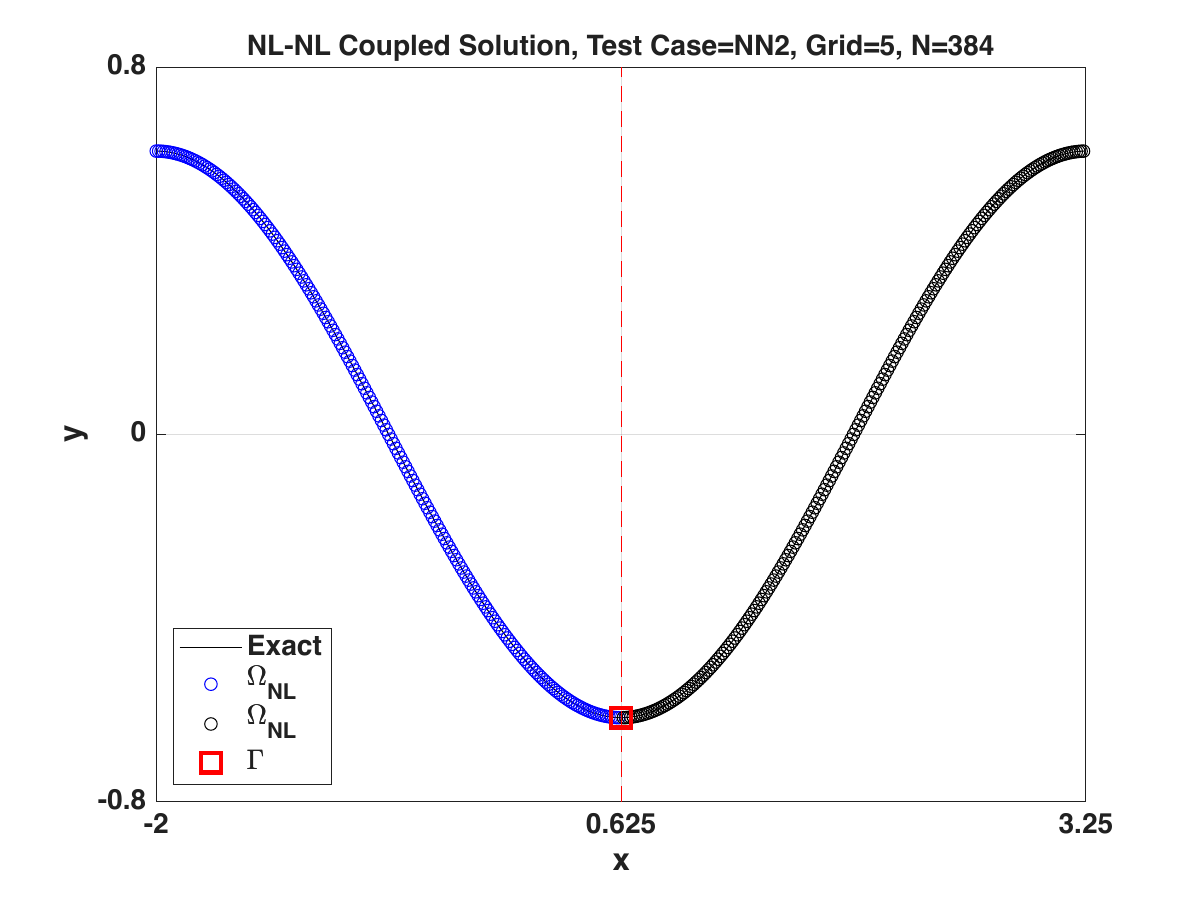}
\end{subfigure}
\begin{subfigure}[b]{0.47\textwidth}
\includegraphics[width=\textwidth]{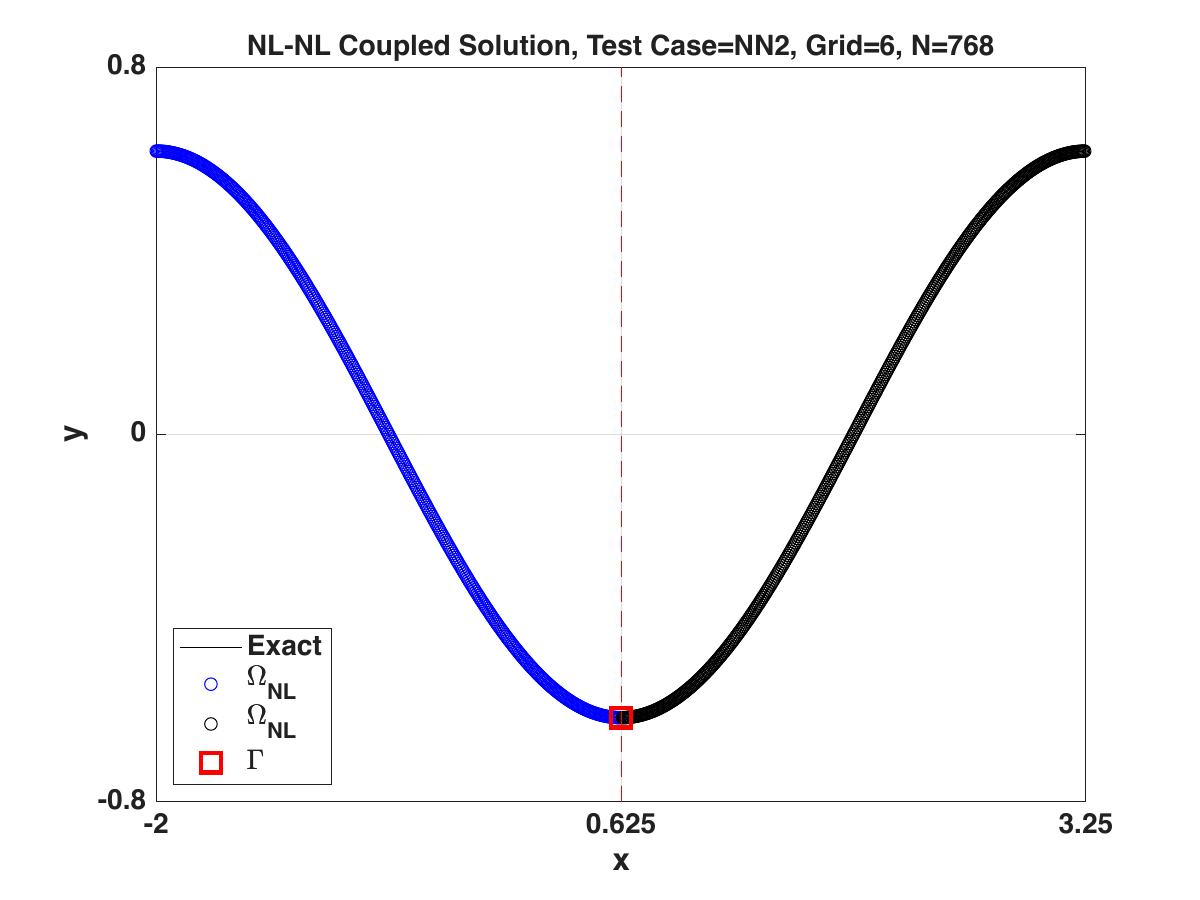}
\end{subfigure}
\begin{subfigure}[b]{0.47\textwidth}
\includegraphics[width=\textwidth]{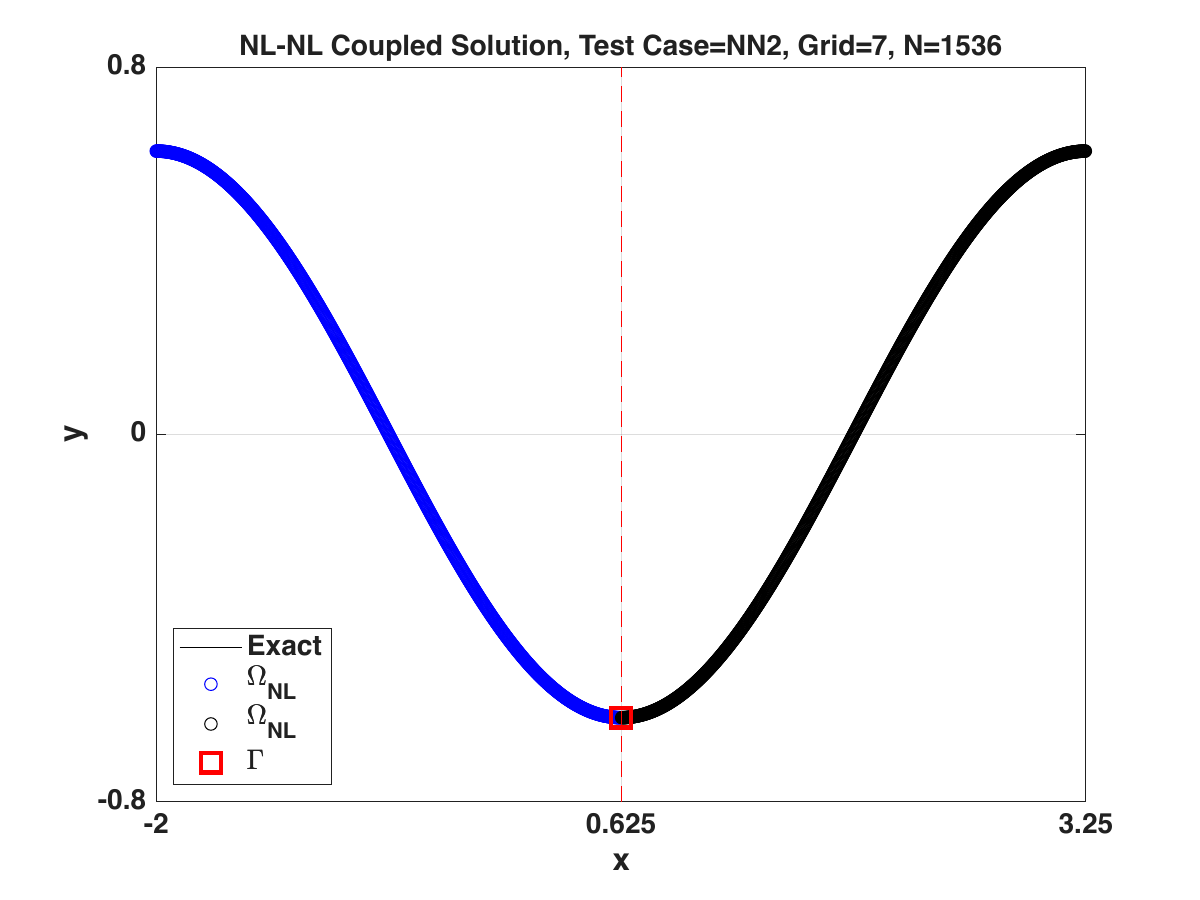}
\end{subfigure}
\caption{NL-NL coupling with $\BC=\NN$, $\NN1$ (top 2 rows) and $\NN2$ (bottom two rows)}
\label{fig:NL_NL_coupling_NN1_NN2}
\end{figure}

In all the experiments, we observe that the numerical solutions at the
interface nodes are very close to the exact solutions compared to
numerical solutions at the collar region.  See how tightly the red
squares capture the exact solution in all the solution plots.  This
indicates that the discrete equation at the interface is sufficiently
accurate and satisfies {\bf (HF1)}.

We report the single domain experiments to determine if the rectification process affects the order of convergence.  We observe that in all test cases, the NL problem enjoys the optimal rate of convergence of $2$.  
In two-subdomain L-L coupled configurations, we also observe the optimal rate of convergence.  For pure Neumann problems, this means that the rank-one update applied to solve the singular system does not deteriorate the convergence rate.  
In two-subdomain NL-NL coupled configurations, the convergence rate decreases to $1$ except the $\NN2$ test case, which we designed on purpose.  The interface falls at a location where the derivatives on either side is zero, thereby, satisfying the Neumann BC compatibility conditions.  Hence, the NL-NL coupled problem does not suffer from an incompatible Neumann condition.  
We cannot solve a mixed two-subdomain (L-NL or NL-N) configuration problem because the rectification process requires to have a NL problem on both ends of the domain.  

In both of the three-subdomain L-NL-L and NL-L-NL cases, where the middle
subdomain becomes floating.  This shows that our coupling method can
handle floating subdomains, which is a major theme in DDMs.  It is
also noteworthy that we can naturally solve problems in the NL-L-NL
configuration with Neumann BCs because a local Neumann condition using
a NL boundary is not straightforward.
In all test cases, the Neumann compatibility conditions are not
satisfied at the interface locations.  Hence, once local and NL
problems are coupled, the convergence rate decreases to $1$.

Since discontinuities are singularities for PDEs, a solution that has
a discontinuity cannot be treated directly with a Laplace operator on
a single domain.  One way to treat discontinuities is to move to
a different problem, known as an interface problem, that contains
multiple Laplace operators on subdomains and interface jump conditions
\cite{carraroWetterauer2016_discont_enrich_xfem}.  On the other hand,
integral operators admit discontinuous solutions. Hence, a
modification to the governing operator is not needed when
discontinuities are introduced to the original solution.

Even though the NL operator is designed to capture discontinuities, an
attentive reader must have noticed that our numerical experiments
contain only continuous solutions.  The reason is that just as the
classical FEM cannot handle discontinuous solutions, the classical
Galerkin projection employed in this study does not handle
discontinuities.  Since the main focus of the present paper is
coupling, we leave the discretization for discontinuous solutions to a
future paper.

\section{Conclusion} \label{sec:conclusion}

For 1D diffusion problems, we constructed a coupling method that is
$\bO(h)$ convergent for an arbitrary solution.  The design of the
method hinges on two hallmark features: The discretized interface
equation becomes a discretized bulk equation and the operators on
either side of the interface should produce the $\nabla u \cdot \bn$
operator.  Our coupling method can handle different configurations
such as L-NL-L and NL-L-NL, where the middle subdomain is floating.

Our coupling method is inspired by the nonoverlapping DDM.  Viewing
the domain decomposition of the local problem as a LtL coupling, we constructed the LtN
coupling.  We have been advocating that local BCs allow for the
transfer of well-established numerical methods developed for local
problems to NL problems.  Our coupling method sets a good example for
this idea.  The success of our method hinges on obtaining an interface
equation that approximates the bulk equation.  We established our
claim with extensive numerical experiments.  The ideas developed in
this study can be generalized to rectangular/box domains in higher
dimensions.

\section*{Declarations} 
\noindent{\bf Conflict of Interest} The authors declared that they have no conflict of interest.

\noindent{\bf Funding Statement} Burak Aksoylu was supported in part by the National Science Foundation DMS 2446826 grant.

\noindent{\bf Author Contributions} All authors contributed to the research and writing of the manuscript.

\section*{Acknowledgments}
The U.S. Department of Energy supported this work through the Los
Alamos National Laboratory. Los Alamos National Laboratory is operated
by Triad National Security, LLC, for the National Nuclear Security
Administration of the U.S. Department of Energy (Contract
No. 89233218CNA000001). Approved for public release with
LA-UR-26-24758.

\appendix
\section{Neumann Conditions and Their Taylor Expansions} \label{sec:Neumann_conds}

Throughout the paper, we utilized the discretized operator that gives
the Neumann BC with the choice of $\delta=h$ because the corresponding
matrix contains relatively small number of entries.  The discretized
operator with the choices $\delta=2h$ and $\delta=3h$ are more
involved and are provided below for completeness:
\begin{eqnarray*}
  \N_{\rright, \delta=2h}^\NL u(e) & = & 
  \begin{bmatrix*} 1 \\ 1 \\ 1 \end{bmatrix*}^\top
    \frac{1}{64h}
\begin{bmatrix*}[r]
  20 & -7 & -12 & -1 & 0 & 0  \\
  -7 & 28 & -8 & -12 & - 1 & 0 \\
-12 & -8 & 40 & -7 & -12 & -1
\end{bmatrix*}
\begin{bmatrix*}[l]
u(e) \\ u(e+h) \\ u(e+2h) \\ u(e+3h) \\ u(e+4h) \\ u(e+5h)
\end{bmatrix*}
\\
  \N_{\lleft, \delta=2h}^\NL u(e) & = & 
  \begin{bmatrix*} 1 \\ 1 \\ 1 \end{bmatrix*}^\top
    \frac{1}{64h}
\begin{bmatrix*}[r]
-1 & -12 & -7 & 40 & -8 & -12 \\
 0 & -1 & -12 & -8 & 28 & -7  \\
 0 & 0 & -1 & -12 & -7 & 20 \\
\end{bmatrix*}
\begin{bmatrix*}[l]
u(e-5h) \\ u(e-4h) \\ u(e-3h) \\ u(e-2h) \\ u(e-h) \\ u(e)
\end{bmatrix*}
\end{eqnarray*}

\begin{eqnarray*}
  \N_{\rright, \delta=3h}^\NL u(e) & = & 
  \begin{bmatrix*} 1 \\ 1 \\ 1 \\ 1 \end{bmatrix*}^\top
    \frac{1}{216h}
\begin{bmatrix*}[r]
  36 & 0 & -23 & -12 & -1 & 0 & 0  & 0 \\
0 & 49  &  -12 & -24 & -12 & - 1 & 0 & 0 \\
-23 & -12 & 71 & 0 & -23 & -12 & -1 & 0 \\
-12 & -24 & 0 & 72 & 0 & -23 & -12 & -1
\end{bmatrix*}
\begin{bmatrix*}[l]
u(e) \\ u(e+h) \\ u(e+2h) \\ u(e+3h) \\ u(e+4h) \\ u(e+5h) \\ u(e+6h)
\end{bmatrix*}
\\
  \N_{\lleft, \delta=3h}^\NL u(e) & = & 
  \begin{bmatrix*} 1 \\ 1 \\ 1 \\ 1 \end{bmatrix*}^\top
    \frac{1}{216h}
\begin{bmatrix*}[r]
-1 & -12 & -23 & 0 & 72 & 0 & -24 & -12 \\
0 & -1 & -12 & -23 & 0 & 71 & -12 & -23  \\
0 & 0 & -1 & -12 & -24 & -12 & 49 & 0 \\
0 & 0 & 0 & -1 & -12 & -23 & 0 & 36
\end{bmatrix*}
\begin{bmatrix*}[l]
u(e-6h) \\ u(e-5h) \\ u(e-4h) \\ u(e-3h) \\ u(e-2h) \\ u(e-h) \\ u(e)
\end{bmatrix*}.
\end{eqnarray*}

Taylor expansions yield
\begin{equation*}
\begin{aligned} 
\N_{\lleft, \delta=2h}^\NL u(e) && = && u'(e) + \bO(h) && = & \quad (\nabla u \cdot \bn)(e) + \bO(h) \\
\N_{\rright, \delta=2h}^\NL u(e) && = && -u'(e) + \bO(h) && = & \quad (\nabla u \cdot \bn)(e) + \bO(h) \\
\N_{\lleft, \delta=3h}^\NL u(e) && = && u'(e) + \bO(h) && = & \quad (\nabla u \cdot \bn)(e) + \bO(h) \\
\N_{\rright, \delta=3h}^\NL u(e) && = && -u'(e) + \bO(h) && =  & \quad (\nabla u \cdot \bn)(e) + \bO(h).
\end{aligned}
\end{equation*}
\fi


\end{document}